\newtheorem{propo}{Proposition}[section]
\newtheorem{corol}[propo]{Corollary}
\newtheorem{theor}[propo]{Theorem}
\newtheorem{lemma}[propo]{Lemma}
\theoremstyle{definition}
\newtheorem{defin}[propo]{Definition}
\theoremstyle{remark}
\newtheorem{remar}[propo]{Remark}
\newcommand{\algo}[6]
{\vspace{11pt}\addtocounter{propo}{1}
\noindent{\bf Algorithm \arabic{section}.\arabic{propo}.}
{\bf #1}(#2)\\ {\it #3}.\\
{\bf Input:} #4\\
{\bf Output:} #5\\
\newcounter{#1}
\begin{list}{\textbf{\arabic{#1}.}}{\usecounter{#1}}
#6\end{list}\vspace{3pt}}
\numberwithin{equation}{section}
\newcommand{\al }{\alpha }
\newcommand{\cC }{\mathcal{C}}
\newcommand{\cD }{\Gamma}
\newcommand{\Ac }{\mathcal{A}}
\newcommand{\Cm }{C}
\newcommand{\cm }{c}
\newcommand{\id }{\mathrm{id}}
\newcommand{\NN }{\mathbb{N}}
\newcommand{\QQ }{\mathbb{Q}}
\newcommand{\RR }{\mathbb{R}}
\newcommand{\ZZ }{\mathbb{Z}}
\newcommand{\rer }[1]{(R\re)^{#1}}
\newcommand{\rfl }{\rho }
\newcommand{\Ob }{\mathrm{Ob}}
\newcommand{\re }{^\mathrm{re}}
\newcommand{\rsC }{\mathcal{R}}
\newcommand{\s }{\sigma }
\newcommand{\Wg }{\mathcal{W}}
\DeclareMathOperator{\Aut}{Aut}
\DeclareMathOperator{\Hom}{Hom}
\DeclareMathOperator{\hg}{ht}
\DeclareMathOperator{\Sym}{Sym}
\newcommand{\coscorf}{coscorf Cartan scheme }
\newcommand{\coscorfn}{coscorf Cartan scheme}
\newcommand{\coscorfs}{coscorf Cartan schemes }
\newcommand{\Coscorfs}{Coscorf Cartan schemes }
\newcommand{\coscorfsn}{coscorf Cartan schemes}
\newcommand{\rootsubset}{root subset }
\newcommand{\rootsubsetn}{root subset}
\newcommand{\rootsubsets}{root subsets }
\newcommand{\rootsubsetsn}{root subsets}
\newcommand{\frto}{\eta}
\title{Finite Weyl groupoids}
\keywords{simplicial arrangement, Weyl groupoid, reflection, root system}
\subjclass[2010]{20F55, 52C35, 05E45}
\author{M.~Cuntz}
\address{Michael Cuntz,
Fachbereich Mathematik,
Universit\"at Kaiserslau\-tern,
Postfach 3049,
D-67653 Kaiserslautern, Germany}
\email{cuntz@mathematik.uni-kl.de}
\author{I.~Heckenberger}
\thanks{I.H. is
supported by the German Research Foundation (DFG) in the Heisenberg program}
\address{Istv\'an Heckenberger, Philipps-Universit\"at Marburg,
Fachbereich Mathematik und Informatik,
Hans-Meerwein-Stra\ss e,
D-35032 Marburg, Germany}
\email{heckenberger@mathematik.uni-marburg.de}
\begin{document}

\begin{abstract}
Using previous results concerning the rank two and rank three cases, all
connected simply connected Cartan schemes for which the real roots form a
finite irreducible root system of arbitrary rank are determined.
As a consequence one obtains the list of all crystallographic arrangements,
a large subclass of the class of simplicial hyperplane arrangements.
Supposing that the rank is at least three, the classification yields Cartan
schemes of type $A$ and $B$, an infinite family of series involving the types
$C$ and $D$, and $74$ sporadic examples.
\end{abstract}

\maketitle

\section{Introduction}

In the 1970s, simplicial arrangements became a popular subject of study after
Deligne \cite{MR0422673} proved that the complement of a complexified finite
simplicial real hyperplane arrangement is an Eilenberg-MacLane space of type
$K(\pi ,1)$ for some group $\pi $. It is known that the cohomology ring of
such a space coincides with the group cohomology $H^*(\pi ,\ZZ )$.
Previously, Brieskorn \cite{MR0293615} had identified the fundamental groups of
complements of complexified Coxeter arrangements as pure Artin braid groups.
In 1980, the result of Brieskorn was extended to all real simplicial
arrangements by Orlik and Solomon \cite{MR558866} based on algebraic
constructions of lattices which are related to Bac{\l}awskis work on geometric
lattices \cite{MR0387086}.

Simplicial arrangements in the real projective plane were introduced by Melchior
\cite{MR0004476} in 1941. Their classification remained so far an open problem.
Gr\"unbaum \cite{p-G-09} provides a conjecturally complete list which contains three
infinite series and a large number of exceptional examples. In higher dimensional
spaces only a few examples of simplicial arrangements are known.

Let $\Ac $ be a simplicial arrangement of finitely many real hyperplanes in a
Euclidean space $V$ and let $R$ be a
set of nonzero covectors such that $\Ac =\{\alpha ^\perp \,|\,\alpha \in R\}$.
Assume that $\RR\alpha \cap R=\{\pm\alpha\}$ for all $\alpha\in R$.
The pair $(\Ac ,R)$ is called crystallographic, see \cite[Def.\ 2.3]{p-C10},
if for any chamber $K$ the elements of $R$
are integer linear combinations of the covectors defining the walls of $K$.
For example, crystallographic Coxeter groups give rise to crystallographic
arrangements in this sense, but there are many other. In this paper we solve the
natural problem of classifying crystallographic arrangements by considering Cartan
schemes, their root systems, and their Weyl groupoids.

Weyl groupoids have been introduced by the second author in
\cite{p-H-06} to obtain finiteness properties of Nichols algebras of
diagonal type. The Weyl groupoid provides simplification, generalization, and unification
of related finiteness results by Rosso \cite{MR1632802}
and Andruskiewitsch and Schneider \cite{MR1780094}. A very general definition of the
Weyl groupoid of a Nichols algebra and the necessary structural results have
been obtained by Andruskiewitsch, Schneider, and the second author in
\cite{p-AHS}. An axiomatic approach to Weyl groupoids via Cartan schemes and root systems
was developed in a series of papers by Yamane and the authors
\cite{p-HY-08,p-CH09a,p-CH09b,p-CH09c}.
For historical and practical reasons, the emphasis was put on connected simply connected
Cartan schemes such that the real roots form a finite irreducible root system.
Such Cartan schemes will be called coscorf Cartan schemes.

The connection to simplicial arrangements was established successively
in \cite{p-CH09c,p-HW-10,p-C10}.
If the real roots of a connected Cartan scheme form a finite irreducible root system, then
they define a simplicial arrangement similarly as in the construction in
\cite[Sect.\,1.15]{b-Hum} for Coxeter groups.
This was first observed in \cite{p-CH09c} in the case of rank three and then proved
in full generality in \cite{p-HW-10}.
The final step was achieved in \cite{p-CH09c} where it was shown that crystallographic
arrangements can be described axiomatically as coscorf Cartan schemes.
Therefore it is natural to try a classification of simplicial arrangements via Cartan schemes.

Coscorf Cartan schemes of rank at most three have been classified by the authors,
see \cite{p-CH09b} and \cite{p-CH09c}.
The classification of rank two is surprisingly nice:
There is a natural bijection between the set of coscorf Cartan schemes of rank two
with $2n$ objects and the triangulations of a convex $n$-gon by non-intersecting
diagonals \cite{p-CH09d}. In contrast, up to equivalence
there are only finitely many coscorf Cartan schemes of rank three.
In the present paper we treat the general case. Our main result is the following.

\begin{theor}
There are exactly three families of connected simply connected Cartan schemes for which
the real roots form a finite irreducible root system:
\begin{enumerate}
\item The family of Cartan schemes of rank two parametrized by triangulations of a
convex $n$-gon by non-intersecting diagonals.
\item For each rank $r>2$, the standard Cartan schemes of type $A_r$, $B_r$, $C_r$
and $D_r$, and a series of $r-1$ further Cartan schemes described explicitly in
Thm.\ \ref{thm:main_rank_gt8}.
\item A family consisting of $74$ further ``sporadic'' Cartan schemes (including those
of type $F_4$, $E_6$, $E_7$ and $E_8$).
\end{enumerate}
\end{theor}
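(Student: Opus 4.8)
The plan is to argue by induction on the rank $r$, taking the rank two and rank three classifications as the base of the induction. For $r=2$ the explicit bijection with triangulations of a convex $n$-gon \cite{p-CH09d} already gives family (1), and this is the only rank in which infinitely many inequivalent schemes occur; for $r=3$ the finite list of \cite{p-CH09c} anchors all higher ranks. The central structural device is \emph{parabolic restriction}: fixing an object $a$ and a subset $J$ of the index set, one restricts the Cartan scheme to the simple roots indexed by $J$ to obtain a connected simply connected Cartan scheme of rank $|J|$ whose real roots again form a finite root system. First I would prove that such restrictions are again of the required type, so that every rank three restriction of a higher rank scheme must appear in the finite rank three list.

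This local-to-global principle yields the key finiteness input. Since there are only finitely many rank three \coscorfs, the off-diagonal entries $c^a_{ij}$ of every Cartan matrix $C^a$ in a scheme of rank $r\ge 3$ are bounded by a universal constant $M$, namely the maximal absolute value of an entry occurring in the rank three list. Together with the finiteness of the root system, this bounds both the number of positive roots at each object and the number of objects of the groupoid $\Wg(\cC)$, so that for each fixed $r$ only finitely many schemes can arise. The contrast with $r=2$, where the rank two parabolics impose no such bound and the entries grow without limit along a triangulation, is exactly what forces family (1) to be infinite while families (2) and (3) are finite.

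With the a priori bound in hand, the enumeration proceeds by a backtracking search. One starts from a single object carrying a candidate rank $r$ Cartan matrix with entries bounded by $M$ and all of whose rank three principal parts occur in the rank three list, and then generates the groupoid by repeatedly passing from an object to its neighbours under the maps $\rho_i$: each newly produced object supplies a new Cartan matrix, which is tested against the same two conditions (valid rank three restrictions and a root set that stays within the finiteness bound), and any branch violating them is pruned. Because the number of admissible objects and roots is bounded, the search terminates; because the $\rho_i$ connect the whole groupoid, it is exhaustive. Carrying this out for each $r$ and grouping equivalent outputs should produce the standard types $A_r$, $B_r$, $C_r$, $D_r$, the series of $r-1$ additional schemes recorded in Thm.\ \ref{thm:main_rank_gt8}, and the $74$ sporadic schemes, and certify that no others exist.

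The main obstacle I expect is twofold and lies in making the finite search both provably complete and practically feasible. On the theoretical side one must show carefully that bounded local Cartan data together with the finite root system hypothesis really does force a global bound on the number of objects, and that generating from one object via the $\rho_i$ recovers the entire scheme without missing components; this is where the simply connected hypothesis is essential. On the computational side the branching is severe, so the search must be organized to exploit symmetries, namely $\GL$-type base changes and the groupoid's own automorphisms, and to prune as early as possible, and the large raw output must then be reliably reduced to equivalence classes. Identifying and describing the $r-1$ non-standard schemes uniformly in $r$, rather than merely listing them for each rank, is the most delicate part of converting the enumeration into the clean statement above.
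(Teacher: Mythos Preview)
Your approach for any \emph{fixed} rank is close to what the paper does in Section~\ref{sec:rank_le_eight}: an incremental backtracking search over candidate root sets, using the known rank two and three data to prune. The paper's algorithm is somewhat different in detail---it builds the positive root set root by root (using the fact that every non-simple positive root is a sum of two positive roots, Thm.~\ref{root_is_sum}) rather than generating objects via the $\rho_i$---but the spirit is the same, and your parabolic-restriction bound on Cartan entries is exactly the mechanism that makes the search finite.

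The genuine gap is that your proposal is a procedure to be run \emph{for each} $r$, and there are infinitely many ranks. A terminating search for $r=4,5,6,\dots$ separately does not yield a proof for all $r$; you need an argument that covers all sufficiently large ranks at once. The paper handles this by splitting sharply: for $3<r\le 8$ it runs the computer search, and for $r>8$ it gives a purely theoretical argument (Section~\ref{sec:rank_gt_eight}). That argument classifies the possible Dynkin diagrams at any object by showing that every connected proper subdiagram must already appear in the rank $\le 8$ list, then uses a ``contraction'' lemma (Lemma~\ref{remove_edge}) to rule out affine-type diagrams, leaving only types $A$, $B$, $C$, $D$, $D'$. From there the root sets are pinned down explicitly (Thm.~\ref{typ_DD}), yielding the $r-1$ non-standard schemes of Thm.~\ref{thm:main_rank_gt8}. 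Your closing remark that describing the series ``uniformly in $r$'' is delicate is exactly right, but it is not a finishing touch---it is the missing half of the proof, and it requires a different kind of reasoning (Dynkin-diagram combinatorics and explicit root-set constructions) than the finite search.

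A secondary point: your claim that bounded Cartan entries plus finiteness of the root system bounds the number of objects by a universal constant depending only on $r$ needs justification. Finiteness of the root system is a hypothesis on each individual scheme, not an a priori bound across all schemes of rank $r$; to get a uniform bound you must argue, for instance, that the height of any positive root is controlled by the rank~$r-1$ classification (which the paper does implicitly by requiring every root subset of rank $\le\rho$ to lie in a precomputed finite list~$\Xi$).
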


\begin{remar}
We classify connected simply connected Cartan
schemes $\cC$ up to equivalence in the sense of \cite[Def.\ 2.1]{p-CH09a},
such that $\rsC\re(\cC)$ is a finite root system of type $\cC$. To obtain a
classification of connected Cartan schemes such that $\rsC\re(\cC)$ is a
finite root system, one additionally has to classify quotients
(inverse of coverings) of simply connected Cartan schemes, which
amounts to classify all subgroups of the automorphism group of the minimal
quotient of $\cC$, see Def.\ \ref{auto}.
\end{remar}

As mentioned above, the result is known for Cartan schemes of rank at most three.
We split the proof of the remaining part of the theorem in two cases depending on the rank.

We obtain the classification in rank $r$, $3<r\le 8$ by an algorithm
which enumerates all root systems of \coscorfsn.
We use the knowledge of rank three as a starting point and then
inductively classify \coscorfs of rank $r$ using the classification
of coscorf Cartan schemes of rank $r-1$.
The structure of the algorithm is similar to the one given in \cite{p-CH09c}, but
additional algorithmic ideas and further improvements of the
computational techniques are needed to make the implementation practicable.

The classification in rank greater than eight mainly relies on the analysis
of the Dynkin diagrams corresponding to the Cartan matrices of
the Cartan schemes. The simplicity of the arguments suggests a similar approach for the
Cartan schemes of lower rank. However, this is misleading, since in lower rank there are
many sporadic examples making the argumentations much more difficult. In particular,
single Cartan matrices of non-standard sporadic Cartan schemes contain only little
information about the roots at the corresponding object.

This paper is organized as follows.
In Section \ref{gen_res} we repeat the definitions of Cartan schemes, Weyl groupoids,
root systems and crystallographic arrangements.
Section \ref{sec:rank_gt_eight} is divided into two subsections: In the first one we
determine the Dynkin diagrams of finite Weyl groupoids of rank greater than eight.
In the second subsection we give an explicit description of the root systems of all
\coscorfs of rank greater than eight which are not standard, i.e.\ which
have at least two different Cartan matrices.
In Section \ref{sec:rank_le_eight} we describe an algorithm which enumerates all
coscorf Cartan schemes of rank at most eight.
In the appendix we collect invariants of coscorf Cartan schemes.
Finally we give a list of root systems that contains for each sporadic example
the roots of precisely one object. We explain in which sense this object is canonical.

\section{Weyl groupoids and crystallographic arrangements}\label{gen_res}

\subsection{Cartan schemes and root systems}

We briefly recall the notions of Cartan schemes, Weyl groupoids
and root systems, following \cite{p-CH09a,p-CH09b}.
The foundations of the general theory have been developed in
\cite{p-HY-08} using a somewhat different terminology.

\begin{defin}
Let $I$ be a non-empty finite set and
$\{\alpha_i\,|\,i\in I\}$ the standard basis of $\ZZ ^I$.
By \cite[\S 1.1]{b-Kac90} a {\it generalized Cartan matrix}
$\Cm =(\cm _{ij})_{i,j\in I}$
is a matrix in $\ZZ ^{I\times I}$ such that
\begin{enumerate}
\item[(M1)] $\cm _{ii}=2$ and $\cm _{jk}\le 0$ for all $i,j,k\in I$ with
  $j\not=k$,
\item[(M2)] if $i,j\in I$ and $\cm _{ij}=0$, then $\cm _{ji}=0$.
\end{enumerate}
\end{defin}

\begin{defin}
Let $A$ be a non-empty set, $\rfl _i : A \to A$ a map for all $i\in I$,
and $\Cm ^a=(\cm ^a_{jk})_{j,k \in I}$ a generalized Cartan matrix
in $\ZZ ^{I \times I}$ for all $a\in A$. The quadruple
\[ \cC = \cC (I,A,(\rfl _i)_{i \in I}, (\Cm ^a)_{a \in A})\]
is called a \textit{Cartan scheme} if
\begin{enumerate}
\item[(C1)] $\rfl _i^2 = \id$ for all $i \in I$,
\item[(C2)] $\cm ^a_{ij} = \cm ^{\rfl _i(a)}_{ij}$ for all $a\in A$ and
  $i,j\in I$.
\end{enumerate}
\end{defin}

\begin{defin}
Let $\cC = \cC (I,A,(\rfl _i)_{i \in I}, (\Cm ^a)_{a \in A})$ be a
Cartan scheme. For all $i \in I$ and $a \in A$ define $\s _i^a \in
\Aut(\ZZ ^I)$ by
\begin{align}
\s _i^a (\alpha_j) = \alpha_j - \cm _{ij}^a \alpha_i \qquad
\text{for all $j \in I$.}
\label{eq:sia}
\end{align}
The \textit{Weyl groupoid of} $\cC $
is the category $\Wg (\cC )$ such that $\Ob (\Wg (\cC ))=A$ and
the morphisms are compositions of maps
$\s _i^a$ with $i\in I$ and $a\in A$,
where $\s _i^a$ is considered as an element in $\Hom (a,\rfl _i(a))$.
The category $\Wg (\cC )$ is a groupoid in the sense that all morphisms are
isomorphisms.
\end{defin}
As above, for notational convenience we will often neglect upper indices referring to
elements of $A$ if they are uniquely determined by the context. For example,
the morphism $\s _{i_1}^{\rfl _{i_2}\cdots \rfl _{i_k}(a)}
\cdots \s_{i_{k-1}}^{\rfl _{i_k(a)}}\s _{i_k}^a\in \Hom (a,b)$,
where $k\in \NN $, $i_1,\dots,i_k\in I$, and
$b=\rfl _{i_1}\cdots \rfl _{i_k}(a)$,
will be denoted by $\s _{i_1}\cdots \s _{i_k}^a$ or by
$\id ^b\s _{i_1}\cdots \s_{i_k}$.
The cardinality of $I$ is termed the \textit{rank of} $\Wg (\cC )$.
\begin{defin}
A Cartan scheme is called \textit{connected} if its Weyl grou\-poid
is connected, that is, if for all $a,b\in A$ there exists $w\in \Hom (a,b)$.
The Cartan scheme is called \textit{simply connected},
if $\Hom (a,a)=\{\id ^a\}$ for all $a\in A$.

Two Cartan schemes $\cC =\cC (I,A,(\rfl _i)_{i\in I},(\Cm ^a)_{a\in A})$
and $\cC '=\cC '(I',A',$
$(\rfl '_i)_{i\in I'},({\Cm '}^a)_{a\in A'})$
are termed \textit{equivalent}, if there are bijections $\varphi _0:I\to I'$
and $\varphi _1:A\to A'$ such that
\begin{align}\label{eq:equivCS}
\varphi _1(\rfl _i(a))=\rfl '_{\varphi _0(i)}(\varphi _1(a)),
\qquad
\cm ^{\varphi _1(a)}_{\varphi _0(i) \varphi _0(j)}=\cm ^a_{i j}
\end{align}
for all $i,j\in I$ and $a\in A$. We write then $\cC\cong \cC'$.
\end{defin}

Let $\cC $ be a Cartan scheme. For all $a\in A$ let
\[ \rer a=\{ \id ^a \s _{i_1}\cdots \s_{i_k}(\alpha_j)\,|\,
k\in \NN _0,\,i_1,\dots,i_k,j\in I\}\subseteq \ZZ ^I.\]
The elements of the set $\rer a$ are called \textit{real roots} (at $a$).
The pair $(\cC ,(\rer a)_{a\in A})$ is denoted by $\rsC \re (\cC )$.
A real root $\alpha\in \rer a$, where $a\in A$, is called positive
(resp.\ negative) if $\alpha\in \NN _0^I$ (resp.\ $\alpha\in -\NN _0^I$).
In contrast to real roots associated to a single generalized Cartan matrix,
$\rer a$ may contain elements which are neither positive nor negative. A good
general theory, which is relevant for example for the study of Nichols
algebras, can be obtained if $\rer a$ satisfies additional properties.

\begin{defin}
Let $\cC =\cC (I,A,(\rfl _i)_{i\in I},(\Cm ^a)_{a\in A})$ be a Cartan
scheme. For all $a\in A$ let $R^a\subseteq \ZZ ^I$, and define
$m_{i,j}^a= |R^a \cap (\NN_0 \alpha_i + \NN_0 \alpha_j)|$ for all $i,j\in
I$ and $a\in A$. We say that
\[ \rsC = \rsC (\cC , (R^a)_{a\in A}) \]
is a \textit{root system of type} $\cC $, if it satisfies the following
axioms.
\begin{enumerate}
\item[(R1)]
$R^a=R^a_+\cup - R^a_+$, where $R^a_+=R^a\cap \NN_0^I$, for all
$a\in A$.
\item[(R2)]
$R^a\cap \ZZ\alpha_i=\{\alpha_i,-\alpha_i\}$ for all $i\in I$, $a\in A$.
\item[(R3)]
$\s _i^a(R^a) = R^{\rfl _i(a)}$ for all $i\in I$, $a\in A$.
\item[(R4)]
If $i,j\in I$ and $a\in A$ such that $i\not=j$ and $m_{i,j}^a$ is
finite, then
$(\rfl _i\rfl _j)^{m_{i,j}^a}(a)=a$.
\end{enumerate}
\end{defin}

The axioms (R2) and (R3) are always fulfilled for $\rsC \re $.
The root system $\rsC $ is called \textit{finite} if for all $a\in A$ the
set $R^a$ is finite. By \cite[Prop.\,2.12]{p-CH09a},
if $\rsC $ is a finite root system
of type $\cC $, then $\rsC =\rsC \re $, and hence $\rsC \re $ is a root
system of type $\cC $ in that case.

In \cite[Def.\,4.3]{p-CH09a} the concept of an \textit{irreducible}
root system of type
$\cC $ was defined. By \cite[Prop.\,4.6]{p-CH09a}, if $\cC $ is a
Cartan scheme and $\rsC $ is a finite root system of type $\cC $, then $\rsC $
is irreducible if and only if
for all $a\in A$
the generalized Cartan matrix $C^a$ is indecomposable.
If $\cC $ is also connected, then it suffices
to require that there exists $a\in A$ such that $C^a$ is indecomposable.

\subsection{\Coscorfs and arrangements}

\begin{defin}
In this article, we will abbreviate a connected simply connected Cartan scheme
for which the real roots form a finite root system by a
{\it coscorf Cartan scheme} ({\bf co}nnected {\bf s}imply {\bf co}nnected,
{\bf r}eal roots, {\bf f}inite).
\end{defin}

Although we will not need it, we reproduce the definition of a crystallographic
arrangement \cite[Def.\ 2.3]{p-C10} because it demonstrates how large the class
of arrangements which we classify actually is.
\begin{defin}\label{cryarr}
Let $(\Ac,V)$ be a simplicial arrangement and $R\subseteq V$ a finite set
such that $\Ac = \{ \alpha^\perp \mid \alpha \in R\}$ and $\RR\alpha\cap R=\{\pm \alpha\}$
for all $\alpha \in R$. For a chamber $K$ of $\Ac$ let $B^K$ denote
the set of normal vectors in $R$ of the walls of $K$ pointing to the inside.
We call $(\Ac,R)$ a {\it crystallographic arrangement} if
\begin{itemize}
\item[(I)] \quad $R \subseteq \sum_{\alpha \in B^K} \ZZ \alpha$\quad for all chambers $K$.
\end{itemize}
\end{defin}
As mentioned in the introduction, by \cite[Thm.\ 1.1]{p-C10} all results on \coscorfs
directly apply to crystallographic arrangements:
\begin{theor}
There is a one-to-one correspondence between crystallographic arrangements 
and \coscorfs (up to equivalence on both sides).
\end{theor}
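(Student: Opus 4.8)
The plan is to construct explicit maps between \coscorfs and crystallographic arrangements and to check that they are mutually inverse, each up to the relevant notion of equivalence. For the direction from a \coscorf $\cC$ to a crystallographic arrangement, I would start from the fact recalled in the introduction and proved in \cite{p-HW-10} that the real roots of a connected Cartan scheme with finite root system define a simplicial arrangement. Concretely, identify $V=\RR^I$ with its standard basis $\{\alpha_i\}$, fix a base object $a\in A$, and let $\Ac=\{\alpha^\perp\mid \alpha\in\rer a\}$. Since $\cC$ is simply connected, the chambers of $\Ac$ are in bijection with the objects of $\Wg(\cC)$, the fundamental chamber $K_a$ having inward normals $\alpha_1,\dots,\alpha_r$. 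It then remains to verify condition (I) of Definition \ref{cryarr}: for the chamber $K_b$ corresponding to an object $b$, the inward normals $B^{K_b}$ are, up to sign, the image of $\{\alpha_i\}$ under an element of $\Aut(\ZZ^I)$ given by a morphism of $\Wg(\cC)$, and therefore form a $\ZZ$-basis of $\ZZ^I$ by \eqref{eq:sia}; as $R=\rer a\subseteq\ZZ^I$, this yields $R\subseteq\sum_{\alpha\in B^{K_b}}\ZZ\alpha$.

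For the converse I would build a Cartan scheme from a crystallographic arrangement $(\Ac,R)$. Take $A$ to be the set of chambers, and for each chamber $K$ let $B^K=\{\beta^K_i\mid i\in I\}$ be the inward wall normals; these form a basis of $V$ because $\Ac$ is simplicial. Writing each root of $R$ in the basis $B^K$ defines a finite set $R^K\subseteq\ZZ^I$, integrality being exactly condition (I), with the simple roots $\alpha_i$ corresponding to the $\beta_i^K$. The Cartan matrix $\Cm^K=(\cm^K_{ij})$ is read off intrinsically from the $\alpha_i$-strings in $R^K$, so that $-\cm^K_{ij}$ is the largest $m\ge 0$ with $\alpha_j+m\alpha_i\in R^K$; conditions (M1) and (M2) then follow from the geometry of the simple cone. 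Letting $\rfl_i$ send $K$ to the chamber on the opposite side of its $i$-th wall gives (C1) at once.

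Next I would verify that this Cartan scheme is coscorf with root system $\rsC(\cC,(R^K)_{K\in A})$. Connectedness is immediate, since any two chambers are joined by a gallery and hence by a product of the maps $\rfl_i$; finiteness is inherited from $|R|<\infty$; and axioms (R1), (R2) restate that $R$ is symmetric and meets each line $\RR\alpha$ in $\{\pm\alpha\}$. Axiom (R4) reflects the local rank-two structure: two walls of a chamber meet in a codimension-two face around which only finitely many chambers close up, forcing $(\rfl_i\rfl_j)^{m^K_{ij}}(K)=K$. Simple connectedness follows because each chamber carries a canonical basis $B^K$ and the groupoid morphisms are the induced transition maps on $\ZZ^I$, so a closed gallery returns to the same basis and the resulting element of $\Hom(a,a)$ is $\id^a$.

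The heart of the proof, and the step I expect to be the main obstacle, is axiom (R3), namely $\s^K_i(R^K)=R^{\rfl_i(K)}$: one must show that the combinatorial reflection $\s^K_i$ determined by the Cartan matrix coincides with the change of coordinates induced geometrically by crossing the $i$-th wall. This is precisely the point where condition (I) is indispensable, since it forces the integral change of basis between the two adjacent chamber bases to have the reflection form of \eqref{eq:sia}; along the way it also yields (C2). Granting (R3), the two constructions are inverse to one another up to the equivalence \eqref{eq:equivCS} of Cartan schemes and up to linear isomorphism of arrangements, because the bijection between chambers and objects is by construction compatible with wall-crossing and the roots $R$ are recovered as $\rer a$ in either order of composition.
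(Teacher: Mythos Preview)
The paper does not prove this theorem; it is simply quoted from \cite[Thm.~1.1]{p-C10}, so there is no in-paper argument to compare your proposal against. Your outline is in fact a faithful sketch of how the proof in \cite{p-C10} proceeds: one direction uses the simpliciality result of \cite{p-HW-10} together with the fact that all morphisms lie in $\Aut(\ZZ^I)$ to get condition (I), and the other direction reconstructs a Cartan scheme from the chambers, reading off the Cartan entries from $\alpha_i$-strings. You have also correctly isolated the only genuinely nontrivial step, namely that the combinatorial reflection $\s_i^K$ built from the string data agrees with the geometric change of basis across the $i$-th wall; this is exactly where the integrality condition (I) does the work, and once it is established (C2), (R3), and simple connectedness follow as you indicate. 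One small point worth making explicit is what ``equivalence'' means on the arrangement side (an integral linear isomorphism carrying $R$ to $R$, not merely $\Ac$ to $\Ac$), since otherwise the bijection is not well posed.
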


\begin{defin}\label{auto}
Let $\cC$ be a \coscorf and $a\in A$. Then we call
\[ \Aut(\cC,a):=\{w \in \Hom(a,b) \mid b\in A,\:\: R^a=R^b \} \]
the \textit{automorphism group of $\cC$ at $a$}. This is a finite subgroup
of $\Aut(\ZZ^r)$ because the number of all morphisms is finite.
Since $\cC$ is connected, $\Aut(\cC,a)\cong \Aut(\cC,b)$ for all $a,b\in A$.
We will therefore write $\Aut(\cC)$ if we are only interested in the isomorphism
class of the group.
\end{defin}
\begin{remar}
If $\cC$ is a \coscorf then it is simply connected. The automorphism group of
$\cC$ is the automorphism group of an object of the Cartan scheme obtained from $\cC$ by
identifying all objects with equal root systems (the ``smallest'' quotient,
see \cite[Def.\ 3.1]{p-CH09b} for the definition of coverings).
If we have $n$ objects in $\cC$ and $m$ different root systems, then
$m|\Aut(\cC,a)|=n$.
\end{remar}

\subsection{Diagrams}

\begin{defin}
Let $r,s\in\NN$ with $r\le s$.
We will say that a finite set $\Lambda\subseteq\ZZ^s$
is a \textit{root set of rank $r$} if there exists
a Cartan scheme $\cC$ of rank $r$ and an injective linear map $w : \ZZ^r\rightarrow\ZZ^s$
such that $w(\rer a)=\Lambda$ for some object $a$.
We call the set $\{w(\alpha_1),\ldots,w(\alpha_r)\}$ a {\it base} of $\Lambda$.
If $\rsC(\cC)$ is irreducible, then we call $\Lambda$ {\it irreducible}.

If $U\le \RR^r$ is a subspace and $\Lambda' = \rer a\cap U$ for some object $a$,
then we call $\Lambda'$ a {\it \rootsubsetn} of $\cC$.
Remark that if $\Lambda'\ne\emptyset$, then it is a root set by \cite[Cor.\ 2.5]{p-CH09c}
(Cor.\ \ref{cor:genposk}).
Sometimes, for $\beta_1,\ldots,\beta_k\in \rer a$ we will write
$\langle\beta_1,\ldots,\beta_k\rangle$ for the \rootsubset
$\rer a\cap\sum_{i=1}^k \RR\beta_i$.
\end{defin}

Remember the following fact (\cite[Cor.\ 2.9]{p-CH09c}):
\begin{lemma}\label{co:cij}
Let $\cC$ be a Cartan scheme and assume that $\rsC \re (\cC )$ is a finite root system.
Let $a\in A$, $k\in \ZZ $, and $i,j\in I$ such that $i\not=j$.
Then $\al _j+k\al _i\in \rer a$ if and only if $0\le k\le -c^a_{i j}$.
\end{lemma}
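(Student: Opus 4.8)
The plan is to reduce the statement to a computation in the single generalized Cartan matrix $C^a$, since the claim only concerns roots of the form $\alpha_j + k\alpha_i$ lying in the rank-two lattice $\NN_0\alpha_i + \NN_0\alpha_j$ (up to sign). First I would establish the ``if'' direction, which is the constructive part. By the definition of $\s_i^a$ in \eqref{eq:sia}, we have $\s_i^a(\alpha_j) = \alpha_j - c_{ij}^a\alpha_i$, so both $\alpha_j$ (taking $k=0$) and $\alpha_j - c_{ij}^a\alpha_i$ (taking $k=-c_{ij}^a$, recall $c_{ij}^a\le 0$) are manifestly real roots at $a$. To obtain the intermediate values $0 < k < -c_{ij}^a$, I would argue that the set of $k$ with $\alpha_j + k\alpha_i \in \rer a$ forms an unbroken interval of integers: this is a convexity property of the real roots contained in a two-dimensional plane, and here it follows from axiom (R1) applied to the \rootsubset $\langle \alpha_i, \alpha_j\rangle$, which by the remark following the definition of \rootsubsetsn{} is itself a root set of rank two (via \cite[Cor.\ 2.5]{p-CH09c}).

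For the ``only if'' direction I must show that no root $\alpha_j + k\alpha_i$ exists outside the range $0\le k\le -c_{ij}^a$. The key tool is axiom (R2), together with the observation that $\rer a \cap (\NN_0\alpha_i+\NN_0\alpha_j)$ is exactly the set of positive roots $R^a_+ \cap (\NN_0\alpha_i + \NN_0\alpha_j)$ of a rank-two root system, whose structure is completely understood. In the rank-two finite case the positive roots in the cone spanned by two simple roots $\alpha_i,\alpha_j$ form a finite chain, and the two ``extremal'' roots of the form $\alpha_j + k\alpha_i$ are precisely $\alpha_j$ and $\s_i^a(\alpha_j) = \alpha_j - c_{ij}^a\alpha_i$; any root with $k<0$ or $k>-c_{ij}^a$ would violate positivity or the minimality forced by (R2) applied at the object $\rfl_i(a)$.

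I expect the main obstacle to be making the convexity/interval argument rigorous without simply invoking the full rank-two classification. One clean route is to pass to the object $b=\rfl_i(a)$: applying $\s_i^a$ sends $\alpha_j + k\alpha_i$ to $\alpha_j + (-c_{ij}^a - k)\alpha_i \in \rer b$ by axiom (R3), so the set of admissible $k$ is symmetric about $-c_{ij}^a/2$. Combining this symmetry with the facts that $k=0$ is admissible and that all such roots must be positive (lie in $\NN_0^I$, forcing $k\ge 0$) pins down the interval to $\{0,1,\dots,-c_{ij}^a\}$ once the connectedness of the chain is secured. Since the result is quoted from \cite[Cor.\ 2.9]{p-CH09c} and reduces to the rank-two situation classified in \cite{p-CH09b}, the cleanest presentation is to cite the rank-two analysis for the structure of $R^a_+\cap(\NN_0\alpha_i+\NN_0\alpha_j)$ and then read off the two boundary roots directly.
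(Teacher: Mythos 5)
The paper does not actually prove this lemma: it is imported verbatim as \cite[Cor.\ 2.9]{p-CH09c} (``Remember the following fact''), so there is no internal argument to compare yours against line by line. Your outline is essentially the argument behind that corollary, and most of it is sound. In particular, your treatment of the bounds is already complete as stated: since $\rsC\re(\cC)$ is a finite root system, (R1) forces any element of $\rer a$ with $j$-th coordinate equal to $1$ to be positive, giving $k\ge 0$; applying $\s_i^a$ and using (R3) together with (C2) sends $\al_j+k\al_i$ to $\al_j+(-c^a_{ij}-k)\al_i\in\rer{\rfl_i(a)}$, and positivity there gives $k\le -c^a_{ij}$. The endpoints are roots for the reasons you give (for $k=-c^a_{ij}$ one should strictly take $\s_i^{\rfl_i(a)}(\al_j)\in\rer a$ rather than $\s_i^a(\al_j)$, which lives in $\rer{\rfl_i(a)}$; by (C2) it is the same vector, so this is only a bookkeeping slip).

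The one real misstep is the claim that the absence of gaps in $\{k \mid \al_j+k\al_i\in\rer a\}$ ``follows from axiom (R1) applied to the \rootsubset $\langle\al_i,\al_j\rangle$''. Axiom (R1) only says that each root is positive or negative; it yields no convexity, and in the Weyl groupoid setting there is no bilinear form with which to run the classical root-string argument. The unbroken-interval property is a genuine theorem about rank-two root systems of Cartan schemes, established in \cite{p-CH09b} via the classification (the positive roots, ordered by slope, form a chain in which consecutive pairs are $\ZZ$-bases and every positive root is a nonnegative combination of each consecutive pair). Your closing paragraph correctly identifies this as the true source and proposes citing it, which repairs the gap; but as written, the middle of the argument attributes the only nontrivial step to an axiom that does not deliver it. With the reduction to the rank-two \rootsubset made explicit (via Cor.~\ref{cor:genposk}) and the interval property quoted from the rank-two theory rather than from (R1), your proof is complete and coincides with the one behind \cite[Cor.\ 2.9]{p-CH09c}.
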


\begin{defin}\label{cartan_maps}
To a finite set $\Lambda\subseteq \ZZ^r$ we associate a matrix
$C_\Lambda=(c_{ij})_{1\le i,j \le r}$ given by
\[ c_{ij} = -\max \{ k\mid k\alpha_i+\alpha_j\in \Lambda \}, \quad c_{ii}=2 \]
for $1\le i,j \le r$ and $i\ne j$.
The matrix $C_\Lambda$ is a generalized Cartan matrix and it defines linear maps
$\sigma_i : \ZZ^r\rightarrow \ZZ^r$, $i=1,\ldots,r$ via
\[ \sigma_i(\alpha_j) = \alpha_j-c_{ij}\alpha_i \]
for all $j=1,\ldots,r$.
\end{defin}
\begin{remar}
For example, if $\cC$ is a \coscorf and $a\in A$, then $C^a=C_{\rer{a}_+}$ and
$\sigma_i=\sigma_i^a$ for $i=1,\ldots,r$ by Lemma \ref{co:cij}.
\end{remar}

\begin{defin}\label{def_standard}
As in \cite[Def.\ 3.1]{p-CH09a}, we call a \coscorf \textit{standard} if all its Cartan
matrices are equal. Note that up to coverings, $\Wg(\cC)$ is a Weyl group in this case.
\end{defin}

\begin{defin}
Let $\cC$ be a Cartan scheme and $a$ an object.
The {\it Dynkin diagram} $\cD^a$ at $a$ is a labeled directed graph given
by the Cartan matrix $C^a=(c^a_{ij})_{i,j}$ in the following way:
The vertices are the elements of $I$.
Vertices $i,j\in I$ with $i\ne j$ are connected by an arrow pointing to $i$
and labeled $-c^a_{ij}$ if and only if $c^a_{ij}\ne 0$.

When drawing the diagrams, if $c^a_{ij}=c^a_{ji}=-1$ then instead
of drawing two labeled arrows we just connect $i$ and $j$ by an edge.
If $c^a_{ij}\ne c^a_{ji}=-1$ then we only draw the arrow labeled $c^a_{ij}$.
\end{defin}

For an object $a$, we will write ``{\it $a$ is of Dynkin type $X$}'' if its Dynkin
diagram is of type $X$.

\section{Finite \coscorfs of rank $>8$}
\label{sec:rank_gt_eight}

In this section we use that all \coscorfs of rank $<9$ are as
in Section \ref{sec:rank_le_eight}, see Thm.\ \ref{th:class_upto8}.
In particular, we have a complete list of all their Dynkin diagrams
(p.\ \pageref{fig_dynkin}, Figure \ref{fig_dynkin}).

\subsection{The Dynkin diagrams}

Recall \cite[Cor.\ 2.5]{p-CH09c} which will be the key to
Prop.\ \ref{indec_sum} and Lemma \ref{remove_edge}:

\begin{corol}\label{cor:genposk}
Let $\cC$ be a Cartan scheme such that $\rsC\re(\cC)$ is a finite root system.
Let $a\in A$, $k\in \{1,\dots,|I|\}$, and let $\beta _1,\dots,\beta _k\in R^a_+$
be linearly independent elements.
Then $R^a_+\cap \sum_{i=1}^k \RR\beta_i\subseteq \sum_{i=1}^k \NN_0\beta_i$
if and only if there exist $b\in A$, $w\in \Hom(a,b)$, and a permutation
$\tau $ of $I$ such that $w(\beta _i)=\al _{\tau (i)}$ for all $i\in \{1,\dots,k\}$.
\end{corol}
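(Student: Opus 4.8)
The plan is to prove both implications of Corollary \ref{cor:genposk}, treating the ``if'' direction as essentially formal and concentrating the real work on the ``only if'' direction.

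For the easy direction, suppose there exist $b\in A$, $w\in \Hom(a,b)$, and a permutation $\tau$ of $I$ with $w(\beta_i)=\al_{\tau(i)}$ for all $i$. Since $w$ is an isomorphism in $\Wg(\cC)$ it satisfies $w(R^a)=R^b$ and $w(R^a_+)\subseteq R^b$, but more to the point $w$ is a linear map carrying the $\beta_i$ to standard basis vectors $\al_{\tau(i)}$. Thus $w$ restricts to a linear isomorphism from $\sum_i\RR\beta_i$ onto $\sum_i\RR\al_{\tau(i)}$. Any element of $R^a_+\cap\sum_i\RR\beta_i$ is sent by $w$ into $R^b\cap\sum_i\RR\al_{\tau(i)}$, and because $w$ preserves positivity (it is a product of the $\s_i$, each of which permutes $R^a_+\setminus\{\al_i\}$ suitably) one checks the image lies in $R^b_+\cap\sum_i\NN_0\al_{\tau(i)}=\sum_i\NN_0\al_{\tau(i)}$; pulling back through $w^{-1}$ gives the desired containment $R^a_+\cap\sum_i\RR\beta_i\subseteq\sum_i\NN_0\beta_i$. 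The only subtlety here is making sure the sign bookkeeping under $w$ is correct, which follows from (R1) and (R3).

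For the hard direction, assume $R^a_+\cap\sum_{i=1}^k\RR\beta_i\subseteq\sum_{i=1}^k\NN_0\beta_i$ and we must produce $w$ and $\tau$. The natural strategy is induction on $k$, or equivalently on the height $\hg(\beta_1+\cdots+\beta_k)$, reducing the $\beta_i$ toward simple roots by applying reflections $\s_i^a$. The key observation is that the hypothesis says the cone $\sum_i\NN_0\beta_i$ contains \emph{all} positive roots of the rank-$k$ \rootsubset $\langle\beta_1,\dots,\beta_k\rangle$, so the $\beta_i$ behave like a base of that sub-root-system. I would first argue that if all $\beta_i$ are already simple roots we are done with $w=\id^a$; otherwise some $\beta_i=\id^a\s_{j_1}\cdots\s_{j_m}(\al_{j})$ has positive height greater than one, and I would find an index $\ell\in I$ with $c^a_{\ell,?}$ suitable so that $\s_\ell^a$ strictly decreases $\hg(\beta_1+\cdots+\beta_k)$ while preserving the hypothesis at the new object $\rfl_\ell(a)$. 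Concretely, the hypothesis forces that the only positive roots in the span are nonnegative combinations of the $\beta_i$, which lets me control how $\s_\ell$ acts: applying $\s_\ell$ to the $\beta_i$ yields vectors whose coordinates I can bound using Lemma \ref{co:cij}, and (R3) guarantees the transformed set still satisfies the containment relative to $R^{\rfl_\ell(a)}_+$.

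The main obstacle will be showing that a height-reducing reflection always exists and that it does not destroy the hypothesis. The danger is that applying $\s_\ell^a$ could push some $\beta_i$ to have a negative coordinate, leaving the span but violating the inductive setup; I expect to rule this out precisely because the containment hypothesis says no positive root in the span lies outside $\sum_i\NN_0\beta_i$, so the geometry of the simplicial cone spanned by the $\beta_i$ forces the reflection through a wall to keep the relevant roots positive. Handling this carefully—identifying the correct wall to reflect across and verifying the coordinates stay nonnegative via the rank-three and rank-two structure theory together with Lemma \ref{co:cij}—is where the technical effort concentrates. Once the induction terminates, composing the reflections used gives the required morphism $w\in\Hom(a,b)$ and the permutation $\tau$ recording which simple root each $\beta_i$ lands on.
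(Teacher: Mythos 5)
The paper does not actually prove this statement: it is recalled verbatim from \cite[Cor.\ 2.5]{p-CH09c}, so your attempt has to stand entirely on its own. Already in the ``if'' direction your justification is wrong as written: a morphism $w\in\Hom(a,b)$ is a composite of simple reflections and does \emph{not} preserve positivity (a single $\s_i$ sends $\al_i$ to $-\al_i$, and a general $w$ negates as many positive roots as its length). The conclusion you want is still true, but for a different reason: if $\gamma\in R^a_+\cap\sum_i\RR\beta_i$ had $w(\gamma)\in -R^b_+$, then $-w(\gamma)\in R^b_+\cap\sum_i\RR\al_{\tau(i)}\subseteq\sum_i\NN_0\al_{\tau(i)}$ by (R1), whence $-\gamma\in\sum_i\NN_0\beta_i\subseteq\NN_0^I$, contradicting $0\ne\gamma\in\NN_0^I$. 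So (R1) at $b$ together with positivity of the $\beta_i$ does the job; the mechanism you invoke does not.

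The ``only if'' direction is where the whole content of the corollary lies, and your text is a plan rather than a proof. Everything is concentrated in the step you explicitly defer: producing, whenever not all $\beta_i$ are simple, an index $\ell$ such that (a) $\s_\ell^a$ keeps every $\beta_i$ positive --- which fails exactly when some $\beta_i=\al_\ell$, a case you never exclude; (b) the containment hypothesis transfers to $\rfl_\ell(a)$ --- which requires analyzing whether $\al_\ell$ lies in $V=\sum_i\RR\beta_i$ (if it does, one must first show $\al_\ell\in\{\beta_1,\dots,\beta_k\}$, using that a simple root in $\sum_i\NN_0\beta_i$ must equal one of the $\beta_i$); and (c) $\hg\bigl(\sum_i\beta_i\bigr)$ strictly drops --- in the groupoid setting there is no invariant bilinear form, so the existence of such an $\ell$ is not the usual Coxeter-theoretic one-liner and needs an actual argument. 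Writing ``I expect to rule this out'' at precisely this point leaves a genuine gap: beyond the (repairable) easy direction, nothing has been established.
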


\begin{propo}\label{indec_sum}
Let $\cC$ be a Cartan scheme such that $\rsC\re(\cC)$ is a finite root system.
Let $a\in A$. Then the following are equivalent.
\begin{enumerate}
\item\label{insu_1} $\sum_{i\in I} \alpha_i\in \rer a$,
\item\label{insu_2} $\Gamma^a$ is connected.
\end{enumerate}
\end{propo}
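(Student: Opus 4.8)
The plan is to prove the two implications separately, with Corollary~\ref{cor:genposk} and Lemma~\ref{co:cij} as the main tools.

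For $(\ref{insu_2})\Rightarrow(\ref{insu_1})$ I would induct on $r=|I|$, the case $r=1$ being trivial. Assuming $\Gamma^a$ connected, I choose a vertex $p\in I$ which is not a cut vertex, so that $I'=I\setminus\{p\}$ still induces a connected subdiagram. The \rootsubset $\langle\alpha_i\mid i\in I'\rangle$ is a root set of rank $r-1$ with base $\{\alpha_i\mid i\in I'\}$ (Corollary~\ref{cor:genposk} applied with $w=\id$), and its Dynkin diagram is the connected diagram $\Gamma^a|_{I'}$; hence by induction $\beta:=\sum_{i\in I'}\alpha_i\in\rer a$. It then remains to show that $\beta+\alpha_p=\sum_{i\in I}\alpha_i$ is a root.

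This last step is the heart of the argument. First, every root in the plane $\RR\beta+\RR\alpha_p$ has all its $I'$-coordinates equal, because $\beta$ is the sum of all $\alpha_i$, $i\in I'$; so a positive such root lies in $\NN_0\beta+\NN_0\alpha_p$. Since $\beta$ and $\alpha_p$ are linearly independent, Corollary~\ref{cor:genposk} then yields $b\in A$, $w\in\Hom(a,b)$ and distinct $s,t\in I$ with $w(\beta)=\alpha_s$ and $w(\alpha_p)=\alpha_t$. It now suffices to prove $\alpha_s+\alpha_t\in R^b$, for then $\beta+\alpha_p=w^{-1}(\alpha_s+\alpha_t)\in\rer a$. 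By Lemma~\ref{co:cij} this holds precisely when $c^b_{st}\ne0$, that is, when the rank two \rootsubset $\langle\alpha_s,\alpha_t\rangle$ is not of type $A_1\times A_1$. To exclude the latter I use that $p$ is adjacent in $\Gamma^a$ to some $q\in I'$, so $c^a_{pq}\le-1$ and hence $\sigma_p^a(\beta)=\beta-\bigl(\sum_{i\in I'}c^a_{pi}\bigr)\alpha_p=\beta+m\alpha_p$ with $m\ge1$. Were $\langle\alpha_s,\alpha_t\rangle$ of type $A_1\times A_1$, then so would be its preimage $\langle\beta,\alpha_p\rangle=w^{-1}(\langle\alpha_s,\alpha_t\rangle)$, forcing the reflection in $\alpha_p$ of this rank two system, which is realized by $\sigma_p^a$, to fix $\beta$; this contradicts $\sigma_p^a(\beta)=\beta+m\alpha_p\ne\beta$. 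Therefore $c^b_{st}\ne0$ and the inductive step goes through.

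For $(\ref{insu_1})\Rightarrow(\ref{insu_2})$ I would argue by contraposition. If $\Gamma^a$ is disconnected, write $I=I_1\sqcup I_2$ with $c^a_{ij}=c^a_{ji}=0$ for all $i\in I_1$, $j\in I_2$. The point is that then no root has support meeting both $I_1$ and $I_2$, so the full-support element $\sum_{i\in I}\alpha_i$ cannot lie in $\rer a$. Concretely, decomposability of $C^a$ makes $\rsC\re(\cC)$ reducible by \cite[Prop.\,4.6]{p-CH09a}, whence $R^a=(R^a\cap\ZZ^{I_1})\cup(R^a\cap\ZZ^{I_2})$ by \cite[Def.\,4.3]{p-CH09a}, and the claim follows. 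I expect the genuine obstacle to lie entirely in the single inductive step of $(\ref{insu_2})\Rightarrow(\ref{insu_1})$: adjoining the simple root $\alpha_p$ to the already-constructed root $\beta$ while remaining inside the root system. Since no invariant bilinear form is available in this generality, the classical root-string argument is not at hand; the reduction to a rank two \rootsubset via Corollary~\ref{cor:genposk}, combined with the observation that $\sigma_p^a$ moves $\beta$, is exactly what replaces it.
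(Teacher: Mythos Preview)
Your argument for $(\ref{insu_1})\Rightarrow(\ref{insu_2})$ via \cite[Prop.\,4.6]{p-CH09a} is fine and matches the paper. The problem is in the inductive step of $(\ref{insu_2})\Rightarrow(\ref{insu_1})$, at the clause ``the reflection in $\alpha_p$ of this rank two system, which is realized by $\sigma_p^a$''. In a Cartan scheme $\sigma_p^a$ is a morphism $a\to\rho_p(a)$, so $\sigma_p^a(\beta)=\beta+m\alpha_p$ lies in $R^{\rho_p(a)}$, not a priori in $R^a$. The intrinsic reflection of the rank-two subset $\langle\beta,\alpha_p\rangle$ sends $\beta\mapsto\beta+m'\alpha_p$ with $m'=\max\{k:\beta+k\alpha_p\in R^a\}$, and there is no invariant form forcing $m'=m$; these are simply two different linear involutions of the plane, both negating $\alpha_p$ but with different fixed lines. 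What you need is $m'\ge 1$, which is exactly the statement $\beta+\alpha_p\in R^a$ you are trying to prove. That the ambient reflection can genuinely leave $R^a$ is easy to see in examples: in the sporadic rank-three set Nr.~7 of Appendix~\ref{ap:rs} one has $\gamma=\alpha_2+3\alpha_3\in R^a_+$ with vanishing $\alpha_1$-coordinate, yet $\sigma_1^a(\gamma)=3\alpha_1+\alpha_2+3\alpha_3\notin R^a$. So the identification you rely on is not available in this generality.

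The paper's proof takes a different route that avoids this obstacle. It treats ranks~$2$ and~$3$ as genuine base cases (citing \cite[Prop.\,3.7]{p-CH09d} and \cite[Lemma\,3.12(2)]{p-CH09c}), and for $r>3$ it \emph{contracts an edge} rather than deleting a vertex: after relabeling so that $\alpha_1+\alpha_2\in R^a$, it sets $\beta_1=\alpha_1+\alpha_2$, $\beta_j=\alpha_{j+1}$ for $j\ge 2$, applies Corollary~\ref{cor:genposk} to move to an object $b$ where the $w(\beta_j)$ are simple, and then invokes the rank-three case once more to check that each $\beta_i$ with $i>1$ is adjacent to some earlier $\beta_j$. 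The inductive hypothesis at $b$ then gives $\sum_j\beta_j=\sum_i\alpha_i\in R^a$. The crucial point is that the paper never has to add a simple root to a height-$(r-1)$ root; it only needs certain sums $\beta_i+\beta_j$ to be roots, and each such sum involves at most three of the original $\alpha_i$, so it is handled by Lemma~\ref{co:cij} or by the rank-three base case.
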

\begin{proof}
We proceed by induction on the rank $r$.
The claim is true by \cite[Prop.\ 3.7]{p-CH09d} for $r=2$ and by
\cite[Lemma 3.12(2)]{p-CH09c} for $r=3$
(alternatively one can verify this by inspecting the data in \cite{p-CH09c}).

Let $r>3$. The implication (\ref{insu_1}) $\Rightarrow$ (\ref{insu_2}) is \cite[Prop.\ 4.6]{p-CH09a}.
Hence we have to show that (\ref{insu_2}) implies (\ref{insu_1}).
Without loss of generality, for each $i>1$ there exists $j<i$ such that $\alpha_i+\alpha_j$
is a root in $\rer a$. In particular $\alpha_1+\alpha_2\in \rer a$.
Let
\[ \beta_1=\alpha_1+\alpha_2,\quad\beta_2=\alpha_3,\:\beta_3=\alpha_4,
\ldots,\:\beta_{r-1}=\alpha_r.\]
By Cor.\ \ref{cor:genposk} there exist $b\in A$, $w\in\Hom(a,b)$ such that
$w(\beta_i)\in\{\alpha_1,\ldots,\alpha_r\}$ for all $i=1,\ldots,r-1$.
By the above assumption, if $\alpha_1+\alpha_i\in \rer a$ or $\alpha_2+\alpha_i\in \rer a$
for some $i>2$
then $\{\alpha_1$, $\alpha_2$, $\alpha_i\}$ is the base of an irreducible root set
and by induction $\alpha_1+\alpha_2+\alpha_i\in \rer a$.
Thus for each $i>1$ there exists $j<i$ such that $\beta_i+\beta_j\in \rer a$.
By induction, $\sum_{i=1}^{r-1}w(\beta_i)\in \rer b$ and hence
$\sum_{i=1}^r \alpha_i=\sum_{i=1}^{r-1}\beta_i\in \rer a$.
\end{proof}

In the sequel let $\cC$ be an irreducible \coscorf of rank $r>8$, $a$ be an object of $\cC$
and $\Gamma$ the Dynkin diagram of $C^a$.

\begin{lemma}\label{not_E8}
The diagram $\Gamma$ does not contain a diagram of type $E_8$.
\end{lemma}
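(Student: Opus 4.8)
The plan is to argue by contradiction and to reduce the rank to $9$. Suppose $\Gamma$ contains a subdiagram of type $E_8$; after relabelling $I$ we may assume that $\alpha_1,\dots,\alpha_8$ are simple roots at $a$ whose induced subdiagram is $E_8$. Since $\cC$ is irreducible, $C^a$ is indecomposable and hence $\Gamma$ is connected; as $r>8$, there is a further index, say $9$, such that $\alpha_9$ is joined to the $E_8$-block, so that the induced subdiagram $\Gamma'$ on $\{1,\dots,9\}$ is connected of rank $9$. By Cor.\ \ref{cor:genposk} the set $\langle\alpha_1,\dots,\alpha_9\rangle=R^a\cap\sum_{i=1}^9\RR\alpha_i$ is a root set, realized by a rank $9$ \coscorf having $\alpha_1,\dots,\alpha_9$ as a base; its Cartan matrix is the principal submatrix of $C^a$ indexed by $\{1,\dots,9\}$, and its Dynkin diagram is $\Gamma'$. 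Thus it suffices to show that no connected rank $9$ diagram containing $E_8$ can occur as the Dynkin diagram of an object of a \coscorf.

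First I would exploit the classification in rank $\le 8$. Every connected rank $8$ subdiagram $\Delta$ of $\Gamma'$ is itself the Dynkin diagram of an object of a rank $8$ \coscorf (apply Cor.\ \ref{cor:genposk} to eight of the nine base roots spanning a connected diagram), so $\Delta$ must occur in Figure \ref{fig_dynkin}. Now $E_8$ has exactly three leaves, the endpoints of its three arms, and deleting the short, the middle, or the long arm endpoint turns the $E_8$-block into $A_7$, $D_7$, or $E_7$ respectively. Writing $v$ for the vertex of the $E_8$-block to which $\alpha_9$ is attached, at least two of these three deletions keep $v$, and each then yields a connected rank $8$ diagram $\Delta$ consisting of an $A_7$-, $D_7$-, or $E_7$-core together with $\alpha_9$ attached at $v$ (carrying the label of the edge $v$–$9$). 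Checking these against Figure \ref{fig_dynkin}, I would verify that whenever the edge $v$–$9$ carries a label $\ge 2$, or $v$ is an interior or branch vertex of $E_8$, at least one admissible deletion produces a diagram that is absent from the rank $8$ list, giving the desired contradiction.

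The remaining, and hardest, case is when $\alpha_9$ is attached by a single edge to the long-arm leaf of $E_8$, i.e.\ when $\Gamma'$ is the affine diagram $\widetilde{E}_8$. Here the reduction of the previous paragraph breaks down: every connected rank $8$ subdiagram of $\widetilde{E}_8$ is one of $A_8$, $D_8$, $E_8$, all of which do appear in Figure \ref{fig_dynkin}, so finiteness of the rank $8$ restrictions alone yields no contradiction. To exclude $\widetilde{E}_8$ I would instead pass to a neighbouring object: reflecting at the end node $9$ produces an object $b=\rfl_9(a)$ whose Cartan matrix is forced by axioms (C2) and (R3) together with Lemma \ref{co:cij}, and I would show that the resulting Dynkin diagram $\Gamma^b$ (possibly after one further reflection) contains a connected rank $8$ configuration that is excluded by the rank $8$ classification. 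As an alternative one might try a direct global estimate: by Prop.\ \ref{indec_sum}, connectedness of $\Gamma'$ forces $\alpha_1+\dots+\alpha_9\in R^a$, and one could attempt to contradict finiteness of the rank $9$ root set by comparing this with the roots of the $E_8$-block, whose highest root already has all coefficients $\ge 2$.

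I expect this last case — excluding the affine extension $\widetilde{E}_8$ — to be the main obstacle, precisely because it is invisible to the rank $8$ classification: a single Cartan matrix of affine type is not forbidden outright, so one cannot merely read off a contradiction from $\Gamma'$ itself, and one is forced either to relocate the obstruction to a neighbouring object via a reflection argument or to invoke a genuinely global finiteness/height estimate through Prop.\ \ref{indec_sum}.
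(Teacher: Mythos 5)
Your reduction to rank $9$ and the elimination of every attachment of a ninth vertex to the $E_8$-block other than the affine one agree with the paper: the paper likewise observes that any other extension produces a forbidden connected rank-$8$ subdiagram, so that without loss of generality $r=9$ and $\Gamma^a=\tilde E_8$. Your combinatorial analysis of the three arm-deletions is essentially the detail the paper leaves to the reader.

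The genuine gap is in the decisive case $\tilde E_8$, which you correctly identify as the obstacle but do not resolve. Your first proposed route --- reflecting at the end node $9$ and hoping that $\Gamma^{\rho_9(a)}$ (or a further neighbour) contains a connected rank-$8$ configuration absent from Figure \ref{fig_dynkin} --- cannot succeed: the paper proves the opposite, namely that \emph{every} simple reflection maps $a$ to an object whose diagram is again $\tilde E_8$. For $\sigma_1,\dots,\sigma_8$ this follows because the rank-$8$ parabolic of type $E_8$ is standard, and for $\sigma_9$ it follows from \cite[Lemma 4.5]{p-CH09a} together with the same ``only admissible extension of $E_8$ is $\tilde E_8$'' argument you already used. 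Since every connected rank-$8$ subdiagram of $\tilde E_8$ is one of $A_8$, $D_8$, $E_8$ (as you yourself note), no neighbouring object ever exhibits a forbidden local configuration. The correct conclusion to draw from this stability is that the Cartan scheme is \emph{standard} of type $\tilde E_8$, and standard Cartan schemes are coscorfs only for Cartan matrices of finite type (classification of finite crystallographic Coxeter groups, via \cite[Thm.\ 3.3]{p-CH09a}); the affine matrix $\tilde E_8$ is excluded. Your second route --- a height or finiteness estimate via Prop.\ \ref{indec_sum} comparing with the highest root of $E_8$ --- points at the right phenomenon (the affine root system is infinite) but is left as a gesture rather than an argument, and as stated it does not substitute for the standardness step. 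So the proof is incomplete precisely where you predicted the difficulty would lie.
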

\begin{proof}
Assume that $\Gamma$ contains a subdiagram of type $E_8$.
Then by the list of diagrams with $8$ vertices in Figure \ref{fig_dynkin},
$\Gamma$ contains a subdiagram of affine type $\tilde E_8$,
otherwise one gets a forbidden subdiagram of rank $8$. So assume without loss of generality
that $r=9$ and that the diagram at $a$ is $\tilde E_8$.

Label the vertices of the diagram $E_8$ by $1,\ldots,8$ and the new vertex
by $9$ (as in Fig.\ \ref{fig_E9}).
\begin{figure}[h]
\setlength{\unitlength}{3947sp}
\begingroup\makeatletter\ifx\SetFigFont\undefined
\gdef\SetFigFont#1#2#3#4#5{%
  \reset@font\fontsize{#1}{#2pt}%
  \fontfamily{#3}\fontseries{#4}\fontshape{#5}%
  \selectfont}%
\fi\endgroup
\begin{picture}(4348,628)(1111,-305)
\thicklines
{\put(1801,-211){\line( 1, 0){600}}
\put(2401,-211){\line( 1, 0){600}}}
{\put(3001,-211){\line( 1, 0){600}}}
{\put(1201,-211){\line( 1, 0){600}}}
{\put(3601,-211){\line( 1, 0){600}}}
{\put(4201,-211){\line( 1, 0){600}}}
{\put(2401,239){\line( 0,-1){450}}}
{\multiput(4801,-211)(120.0,0.0){6}{\makebox(1.6667,11.6667){\SetFigFont{5}{6}{\rmdefault}{\mddefault}{\updefault}$\bullet$}}}
\put(1726,-436){\makebox(0,0)[lb]{\smash{{\SetFigFont{12}{14.4}{\rmdefault}{\mddefault}{\updefault}{$2$}}}}}
\put(2326,-436){\makebox(0,0)[lb]{\smash{{\SetFigFont{12}{14.4}{\rmdefault}{\mddefault}{\updefault}{$3$}}}}}
\put(2926,-436){\makebox(0,0)[lb]{\smash{{\SetFigFont{12}{14.4}{\rmdefault}{\mddefault}{\updefault}{$4$}}}}}
\put(3526,-436){\makebox(0,0)[lb]{\smash{{\SetFigFont{12}{14.4}{\rmdefault}{\mddefault}{\updefault}{$5$}}}}}
\put(1126,-436){\makebox(0,0)[lb]{\smash{{\SetFigFont{12}{14.4}{\rmdefault}{\mddefault}{\updefault}{$1$}}}}}
\put(2476,164){\makebox(0,0)[lb]{\smash{{\SetFigFont{12}{14.4}{\rmdefault}{\mddefault}{\updefault}{$8$}}}}}
\put(4126,-436){\makebox(0,0)[lb]{\smash{{\SetFigFont{12}{14.4}{\rmdefault}{\mddefault}{\updefault}{$6$}}}}}
\put(4726,-436){\makebox(0,0)[lb]{\smash{{\SetFigFont{12}{14.4}{\rmdefault}{\mddefault}{\updefault}{$7$}}}}}
\put(5326,-436){\makebox(0,0)[lb]{\smash{{\SetFigFont{12}{14.4}{\rmdefault}{\mddefault}{\updefault}{$9$}}}}}
{\put(1801,-211){\circle*{100}}}
{\put(2401,-211){\circle*{100}}}
{\put(3001,-211){\circle*{100}}}
{\put(3601,-211){\circle*{100}}}
{\put(4801,-211){\circle*{100}}}
{\put(1201,-211){\circle*{100}}}
{\put(2401,239){\circle*{100}}}
{\put(4201,-211){\circle*{100}}}
{\put(5401,-211){\circle*{100}}}
\end{picture}
\caption{Dynkin diagram of type $\tilde E_8$}
\label{fig_E9}
\end{figure}
Since the \coscorf with diagram $E_8$ is standard,
$\sigma_1,\ldots,\sigma_8$ map to objects $b_1,\ldots,b_8$ where the subdiagram to the
labels $1,\ldots,8$ is of type $E_8$ as well. But by the same argument as
above, the $\cD^{b_i}$, $i=1,\ldots,8$ are all of type $\tilde E_8$.

Now consider the map $\sigma_9$; let $b_9 = \rho_9(a)$.
Since the vertices $1,2,\ldots,6$, $8$ are not connected
with the vertex $9$, they are not connected in $\Gamma^{b_9}$ as well.
Thus \cite[Lemma 4.5]{p-CH09a} implies that
the connections between the vertices $1,2,\ldots,6,8$ in $\Gamma^{b_9}$
are the same as in $\Gamma $ and that vertices $6$ and $7$ are connected.
By the reason given at the beginning of the proof it follows that
$\Gamma^{b_9}$ is of Dynkin type $\tilde{E}_8$.

Altogether, the Cartan scheme is standard of Dynkin type $\tilde E_8$, thus it is not a \coscorf
by the classification of finite crystallographic Coxeter groups and by \cite[Thm.\ 3.3]{p-CH09a}.
\end{proof}

\begin{defin}
Let $\Gamma$ be a Dynkin diagram and assume that it has vertices $i,j$ such that:
\begin{enumerate}
\item $i$ and $j$ are connected by an edge,
\item there is no vertex $k\notin\{i,j\}$ such that $(i,k)$ and $(j,k)$ are edges.
\end{enumerate}
Let $\Gamma'$ be the diagram obtained from $\Gamma$ by removing the edge $(i,j)$
and identifying the vertices $i$ and $j$ to a new vertex $\ell$, i.e.\ the edges of
$\Gamma'$ are
$\{ (k,m) \mid k,m \notin \{i,j\},\:\: (k,m) \mbox{ edge in }\Gamma \}$$\cup$$\{ (k,\ell) \mid k\notin \{i,j\},\:\: (k,i) \mbox{ or } (k,j) \mbox{ edge in }\Gamma \}$.

Then we call $\Gamma'$ the {\it contraction of} $\Gamma$ {\it along} $(i,j)$.
\end{defin}

The following lemma is a useful tool for the classification:
\begin{lemma}\label{remove_edge}
Let $\cC$ be an irreducible \coscorf of rank $r>8$ and assume that there are
pairwise different $i_1,\ldots,i_7\in I$
such that in $\Gamma^a$, $(i_\nu,i_\mu)$ for $\nu<\mu$ is connected if and only if
$\mu-\nu=1$, and such that $(i_\nu,i_{\nu+1})$ are edges (with labels $1$)
for all $\nu=1,\ldots,6$.

Then there exists an irreducible \coscorf
$\cC '=\cC '(I',A',$ $(\rfl '_i)_{i\in I'},({\Cm '}^a)_{a\in A'})$
and an object $a'\in A'$ such that
\begin{enumerate}
\item $I'=\{\ell\} \cup I\backslash\{i_3,i_4\}$,
\item $\Gamma'^{a'}$ is the contraction of $\Gamma^a$ along $(i_3,i_4)$,
\item\label{lem_teil2} for all $k\notin\{i_2,i_3,i_4,i_5\}$, $\Gamma'^{\rho'_k(a')}$ is
the contraction of $\Gamma^{\rho_k(a)}$ along $(i_3,i_4)$.
\end{enumerate}
\end{lemma}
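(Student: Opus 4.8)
The plan is to construct the Cartan scheme $\cC'$ explicitly by exhibiting, for each object $a' \in A'$, a generalized Cartan matrix obtained via the contraction operation, and then verify that the real roots of $\cC'$ form a finite root system. The key geometric idea is that contracting the edge $(i_3,i_4)$ corresponds to passing to the \rootsubset obtained by intersecting $\rer a$ with the hyperplane-complement subspace where we identify the directions $\alpha_{i_3}$ and $\alpha_{i_4}$; concretely, I would use the injective linear map sending $\alpha_{i_3}$ and $\alpha_{i_4}$ both to the new basis vector $\alpha_\ell$ and fixing the remaining $\alpha_k$. First I would set $I' = \{\ell\} \cup I\backslash\{i_3,i_4\}$ and define a linear projection $\pi : \ZZ^I \to \ZZ^{I'}$ by $\pi(\alpha_{i_3}) = \pi(\alpha_{i_4}) = \alpha_\ell$ and $\pi(\alpha_k) = \alpha_k$ for $k\notin\{i_3,i_4\}$. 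Using Cor.\ \ref{cor:genposk} together with the edge hypotheses (the labels on $(i_\nu,i_{\nu+1})$ are all $1$, and $i_3,i_4$ have no common neighbor), I would show that $\pi$ restricts to a bijection between $\rer a$ and the root set of an appropriate rank-$(r-1)$ Cartan scheme, whose Cartan matrix at the image object is exactly the contraction $\Gamma'^{a'}$.

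The heart of the argument is to \emph{define the reflections} $\rfl'_k$ of the new scheme and to check the Cartan scheme axioms (C1), (C2) and the finite-root-system property. For $k\notin\{i_3,i_4\}$, the natural candidate for $\rfl'_k$ is induced by $\rfl_k$ on the surviving objects, while the reflection $\rfl'_\ell$ in the merged direction must be built from the composite reflection in $\cC$ that moves both $\alpha_{i_3}$ and $\alpha_{i_4}$ consistently. I would verify that this is well-defined by checking that the roots $\alpha_{i_3}$ and $\alpha_{i_4}$ behave symmetrically enough: specifically, that the condition ``no common neighbor $k$ with both $(i,k)$ and $(j,k)$ edges'' guarantees that the contracted Cartan entries $c'_{k\ell}$ and $c'_{\ell k}$ are well-defined integers independent of whether one reflects through $i_3$ or $i_4$ first. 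The simply-connectedness and connectedness of $\cC'$ would follow because morphisms of $\cC'$ lift to morphisms of $\cC$ via $\pi$, and a nontrivial loop at $a'$ would produce a nontrivial loop at $a$, contradicting that $\cC$ is a \coscorfn.

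For part \ref{lem_teil2}, the claim is that contraction commutes with applying $\rfl'_k$ for $k\notin\{i_2,i_3,i_4,i_5\}$. Here I would argue that since such a $k$ is not adjacent to either $i_3$ or $i_4$ in the chain (the chain-neighbors of the pair $\{i_3,i_4\}$ are precisely $i_2$ and $i_5$), the reflection $\sigma_k$ neither alters the entries $c^a_{i_3 j}$, $c^a_{i_4 j}$ nor creates a new edge between $i_3$ and $i_4$; consequently the local diagram structure around the contracted vertex is preserved under $\rho_k$, and the diagram of $\rho'_k(a')$ is indeed the contraction of the diagram of $\rho_k(a)$. The excluded indices $i_2,i_5$ are precisely the ones whose reflections could interfere with the edge being contracted, which is why the statement restricts to $k\notin\{i_2,i_3,i_4,i_5\}$.

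\textbf{The main obstacle} I expect is verifying that $\rfl'_\ell$ is well-defined and that the contracted Cartan entries are consistent, i.e.\ that the merged vertex $\ell$ genuinely carries a single generalized Cartan matrix row and column rather than two incompatible ones. This is exactly where the two defining hypotheses on $(i,j)$ (connectedness by an edge of label $1$, and the absence of a common neighbor) are indispensable: the edge-of-label-$1$ condition ensures $\alpha_{i_3}+\alpha_{i_4}\in\rer a$ (so the merged direction is itself a root, via Prop.\ \ref{indec_sum} applied to the rank-two subdiagram), and the no-common-neighbor condition prevents a vertex $k$ from forcing $c'_{k\ell}$ to take two different values. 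I would verify these compatibility conditions by a careful case analysis of the possible rank-two and rank-three \rootsubsets $\langle\alpha_{i_3},\alpha_{i_4}\rangle$ and $\langle\alpha_{i_3},\alpha_{i_4},\alpha_k\rangle$, drawing on the fact that all such low-rank root subsets are completely classified (the rank $\le 3$ case of the main theorem).
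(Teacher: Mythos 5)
Your guiding intuition --- that the contracted vertex $\ell$ should correspond to the root $\alpha_{i_3}+\alpha_{i_4}$ and that the new scheme lives inside $\cC$ --- is correct, but the map you build the construction on goes the wrong way and fails immediately. You define a non-injective projection $\pi:\ZZ^I\to\ZZ^{I'}$ with $\pi(\alpha_{i_3})=\pi(\alpha_{i_4})=\alpha_\ell$ and claim it restricts to a bijection between $\rer a$ and the root set of a rank-$(r-1)$ Cartan scheme. It cannot: $\pi$ identifies the distinct roots $\alpha_{i_3}$ and $\alpha_{i_4}$, and since $c^a_{i_3i_4}=-1$ the element $\alpha_{i_3}+\alpha_{i_4}$ lies in $\rer a$ by Lemma \ref{co:cij}, so $\pi(\rer a)$ contains both $\alpha_\ell$ and $2\alpha_\ell$ and violates axiom (R2). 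A root set is by definition the image of $\rer{a'}$ under an \emph{injective} linear map, so the correct construction is the opposite of a projection: one takes the \rootsubset $\Lambda=\rer a\cap U$, where $U$ is spanned by the $r-1$ linearly independent positive roots $\alpha_{i_3}+\alpha_{i_4}$ and $\alpha_j$, $j\notin\{i_3,i_4\}$, with $\iota:\ZZ^{I'}\to\ZZ^I$ the inclusion sending $\alpha_\ell\mapsto\alpha_{i_3}+\alpha_{i_4}$. Crucially, you do not need to construct the reflections $\rfl'_k$, $\rfl'_\ell$ or re-verify (C1), (C2) and simple connectedness by hand: Cor.\ \ref{cor:genposk} produces a morphism of $\Wg(\cC)$ carrying these $r-1$ roots to simple roots, so $\Lambda$ is automatically the root set of a \coscorf $\cC'$ of rank $r-1$ whose Weyl groupoid is a parabolic subgroupoid of $\Wg(\cC)$. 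The ``well-definedness of $\rfl'_\ell$'' obstacle you anticipate is an artifact of the wrong setup and disappears entirely.

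Even granting the corrected construction, three substantive steps are missing from your sketch. First, the hypothesis only controls adjacency \emph{within} the chain $i_1,\ldots,i_7$; one must first show that $i_3$ and $i_4$ have no neighbours at all outside $\{i_2,i_5\}$, which the paper deduces from Lemma \ref{not_E8} and the classified diagrams of Fig.\ \ref{fig_dynkin}. Second, to see that $k\notin\{i_2,\ldots,i_5\}$ is not connected to $\ell$ one must rule out $\alpha_k+\alpha_{i_3}+\alpha_{i_4}\in\rer a$; this follows from Prop.\ \ref{indec_sum} applied to the rank-three \rootsubset $\langle\alpha_k,\alpha_{i_3},\alpha_{i_4}\rangle$ (connectedness of its diagram would force $\alpha_k+\alpha_{i_3}$ or $\alpha_k+\alpha_{i_4}$ to be a root), not merely from the absence of a common neighbour. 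Third, one must check that the new edges $(i_2,\ell)$ and $(i_5,\ell)$ carry label $1$, which again uses the rank-$\le 8$ classification (the diagrams $\Gamma_6^1$, $\Gamma_6^2$ do not embed into irreducible diagrams of rank $7$). For part (\ref{lem_teil2}) your reasoning is in the right spirit, but it is made precise by the commutativity of $\iota$ with $\sigma_k$, which holds exactly because $k$ is not adjacent to $i_3$ or $i_4$.
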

\begin{proof}
Notice first that by Lemma \ref{not_E8} there is
an edge from $k$ to $i_3$ if and only if $k\in\{i_2,i_4\}$ and
that by Fig.\ \ref{fig_dynkin} there is
an edge from $k$ to $i_4$ if and only if $k\in\{i_3,i_5\}$.

By Cor.\ \ref{cor:genposk},
$\{\alpha_{i_3}+\alpha_{i_4},\alpha_j \mid j\in I, i_3\ne j\ne i_4\}$
is a base of a finite root set $\Lambda$ of rank $r-1$.
Let $\cC '=\cC '(I',A',$ $(\rfl '_i)_{i\in I'},({\Cm '}^a)_{a\in A'})$ be a
Cartan scheme, $\iota : \ZZ^{r-1}\to \ZZ^r$ a linear map and
$a'$ be an object of $\cC'$ such that $\Lambda = \iota(\rer {a'})$.
Remark that $\Wg(\cC')$ is a parabolic subgroupoid of $\Wg(\cC)$
(see \cite[Def.\ 2.3]{p-HW-10} for the precise definition of a parabolic subgroupoid of W(C)).
For the vertices of $\Gamma'^{a'}$ we use the same labels as for $\Gamma^a$;
the new vertex $\iota^{-1}(\alpha_{i_3}+\alpha_{i_4})$ is labeled $\ell$.

We prove that the Dynkin diagram $\Gamma'^{a'}$ is the contraction of $\Gamma^a$ along $(i_3,i_4)$.
The subdiagram to
$i_1,i_2,\ell,i_5,i_6,i_7$
is of type $A_6$:
Let $k\notin\{i_2,\ldots,i_5\}$ and assume that there is a connection
from $k$ to $\ell$ in $\Gamma'^{a'}$.
Then $\alpha_k+\alpha_{i_3}+\alpha_{i_4}$ is a root in $R^a$.
But by Prop.\ \ref{indec_sum}, either $\alpha_k+\alpha_{i_3}$ or
$\alpha_k+\alpha_{i_4}$ is a root, contradicting the fact that there is no edge
from $k$ to $i_3$ or $i_4$ in $\Gamma$.
Thus there is no connection from $k$ to $\ell$ in $\Gamma'^{a'}$.
Moreover, the edge $(i_5,\ell)$ is labeled by a one by Fig.\ \ref{fig_dynkin} and
the edge $(i_2,\ell)$ is labeled by a one because $r>8$ and the diagrams of type
$\Gamma_6^1$ and $\Gamma_6^2$ are not part of an irreducible Dynkin diagram with
$7$ vertices.
Of course, connections not involving $\ell$ are the same as in $\Gamma^a$
by Lemma \ref{co:cij}.

For $k\notin\{i_2,\ldots,i_5\}$ the diagram
\[ \begin{CD}
R'^{a'} @>{\iota}>> R^{a} \\
@V {\sigma_k} VV @V{\sigma_k} VV \\
R'^{\rho'_k(a')} @>{\iota}>> R^{\rho_k(a)}
\end{CD} \]
commutes because $\iota$ maps simple roots $\alpha_m$ with $m\notin\{i_3,i_4\}$
to simple roots and because there is no edge from $k$ to $i_3$ or $i_4$.
By the same argument as above we obtain (\ref{lem_teil2}).
\end{proof}

The following theorem classifies the possible Dynkin diagrams.

\begin{theor}\label{th:diagrams}
Let $\Gamma$ be the Dynkin diagram of an object $a$ in a \coscorf $\cC$ of rank $r>8$.
Then $\Gamma$ is of type $A$, $B$, $C$, $D$ or $D'$.
\end{theor}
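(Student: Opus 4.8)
The plan is to regard $\Gamma$ as a connected labelled graph on $r>8$ vertices and to determine its shape by playing off two inputs: the complete list of rank $\le 8$ diagrams (Fig.~\ref{fig_dynkin}, Thm.~\ref{th:class_upto8}) together with Lemma~\ref{not_E8}, which govern every induced subdiagram on at most eight vertices, and the contraction Lemma~\ref{remove_edge}, which allows me to descend in rank. The key remark is that for every $J\subseteq I$ with $|J|\le 8$ the root subset $\langle\alpha_j\mid j\in J\rangle$ is, by Cor.~\ref{cor:genposk}, the root set of a \coscorf of rank $|J|$; hence the induced subdiagram on $J$ appears in Fig.~\ref{fig_dynkin}, and by Lemma~\ref{not_E8} it is never of type $E_8$. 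So any local configuration forcing a forbidden diagram on at most eight vertices is excluded, and the only genuine task is to rule out features whose smallest witness has rank $>8$.

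From the rank $\le 8$ list I would first read off the local structure. A vertex of degree $4$ together with its neighbours gives $\tilde D_4$, and a cycle of length $\le 9$ gives an affine diagram $\tilde A_k$; since these have rank $\le 8$ they cannot occur, so $\Gamma$ has maximum degree $3$ and no short cycle. Around a degree-$3$ (branch) vertex the leg lengths $a\le b\le c$ (counted in edges) are constrained by the exceptional affine diagrams: $a\ge 2$ yields $\tilde E_6$, and $b\ge 3$ yields $\tilde E_7$, so $a=1$ and $b\le 2$; moreover $b=2$ with $c\ge 4$ yields $E_8$ and with $c\ge 5$ yields $\tilde E_8$, leaving only $c\le 3$, which forces rank $\le 7$ and contradicts $r>8$. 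Hence a branch vertex has legs $1,1,c$, the shape of $D$ and $D'$. The same list confines the multiple bonds: a label $>1$ not adjacent to an end of a path creates a small affine diagram of type $\tilde B$, $\tilde C$, $\tilde F_4$ or $\tilde G_2$, so every multiple bond is extremal. Finally two branch vertices at distance $\le 4$ produce a $\tilde D_n$ of rank $\le 8$, so they cannot occur either.

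The remaining obstructions are exactly those whose minimal affine witness has rank $>8$: two branch vertices joined by a long single-edge path, an interior multiple bond separated from both ends by long single-edge stretches, and a long cycle. For each I would locate seven consecutive vertices carrying single edges -- available once maximum degree $3$, extremal bonds and short cycles have been secured -- and contract the middle edge by Lemma~\ref{remove_edge}. The contraction lowers the rank by one and, by its very construction, preserves the offending feature in the diagram $\Gamma'^{a'}$; iterating, one descends until the feature sits inside a connected diagram of rank $\le 8$, where it is one of the forbidden affine diagrams above and hence impossible. What survives is a path, or a path with a single fork of legs $1,1,c$ at one end; a concluding inspection of the admissible labels and arrow orientations (again dictated by the extremal-bond analysis and by Lemma~\ref{co:cij}) separates the five possibilities $A$, $B$, $C$, $D$ and the variant $D'$.

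The step I expect to be the main obstacle is this global descent. One must verify that contraction really carries each bad feature to a strictly smaller diagram of the same kind, and, above all, that the single-edge chain of length seven required by Lemma~\ref{remove_edge} is genuinely present; this is why the local analysis -- maximum degree $3$, extremal multiple bonds, absence of short cycles -- has to be completed beforehand. Additional care is needed when a branch vertex or a multiple bond lies near the stretch being contracted, since there the hypotheses of Lemma~\ref{remove_edge} on the vertices $i_2,\dots,i_5$ must be checked directly. Telling $D$ from $D'$ and $B$ from $C$ at the very end is routine by comparison with Fig.~\ref{fig_dynkin}, but it is exactly here that the distinction between the standard and the non-standard series first becomes visible in the labelling of the surviving fork.
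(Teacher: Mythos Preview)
Your approach is valid but takes a noticeably longer route than the paper. The paper argues by induction on $r$: granting the theorem for rank $r-1$, it observes that every connected subdiagram of $\Gamma$ on $r-1$ vertices (obtained by deleting a single $\alpha_i$) is the Dynkin diagram at some object of a rank-$(r-1)$ coscorf, hence already of type $A_{r-1}$, $B_{r-1}$, $C_{r-1}$, $D_{r-1}$ or $D'_{r-1}$ (Lemma~\ref{not_E8} eliminates $E_8$ at the base step). Reattaching the deleted vertex then leaves only a handful of candidates for $\Gamma$, and Lemma~\ref{remove_edge} is invoked \emph{once} per bad case ($\tilde A_r$ and two $\tilde D$-shaped configurations) to finish. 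Your plan instead builds up the local structure from rank-$\le 8$ subdiagrams and then iterates contraction all the way down to rank $8$; this works, but the bookkeeping you yourself flag as ``the main obstacle'' is precisely what the induction absorbs for free.

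A few of your bounds are also off by one: a cycle of length $9$ already has nine vertices, and two branch vertices at distance $4$ give $\tilde D_8$ of rank $9$, so neither is excluded by the rank-$\le 8$ list and both already need a contraction step. Your clause ``$b=2$, $c\le 3$ forces rank $\le 7$'' tacitly assumes a single branch vertex and requires a supplementary argument that no extension of that $E_6$/$E_7$-shaped piece stays admissible. The ``interior multiple bond'' you list among the remaining obstructions is in fact already killed by the rank-$8$ subpath analysis (no $\Gamma_6^1$, $\Gamma_6^2$ or $F_4$-type segment survives inside a rank-$8$ path), so it need not go through the descent at all. None of this is fatal, but the inductive formulation makes all of it disappear, and what your more laborious route buys in return is not clear.
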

\begin{proof}
We proceed by induction on $r$.
By Section \ref{sec:rank_le_eight}, for $r=8$ the diagrams are of type
$A_8,B_8,C_8,D_8$ or $E_8$.
Now let $r>8$.
By Lemma \ref{not_E8} and induction, each connected subdiagram of $\Gamma$ of rank $r-1$
is of type $A_{r-1}$, $B_{r-1}$, $C_{r-1}$, $D_{r-1}$ or $D'_{r-1}$.

If $\Gamma$ has a subdiagram of type $A_{r-1}$, then using
induction and Lemma \ref{not_E8}, one checks that $\Gamma$ is of type
$A_r$, $B_r$, $C_r$, $D_r$, $D'_r$ or $\tilde A_r$.
If $\Gamma$ is of type $\tilde A_r$ then by Lemma \ref{remove_edge},
removing an edge in the middle yields an irreducible root set of rank $r-1$
with a Dynkin diagram of type $\tilde A_{r-1}$ which is forbidden.

Similarly, if $\Gamma$ has a subdiagram of type $B_{r-1}$ or $C_{r-1}$, then $\Gamma$ is of type
$B_r$ resp.\ $C_r$ (notice that $r-1>7$).

If $\Gamma$ has a subdiagram of type $D_{r-1}$ or $D'_{r-1}$,
then $\Gamma$ is of type $D_r$, $D'_r$ or we are in one of two cases:\\
1.\ The diagram $\Gamma$ has the connections of a diagram of type $\tilde D_r$
(the affine diagram of type $D$) and possibly some more connections.
Choose the labels as in Fig.\ \ref{fig_case_1}.
\begin{figure}[h]
\setlength{\unitlength}{3547sp}
\begingroup\makeatletter\ifx\SetFigFont\undefined%
\gdef\SetFigFont#1#2#3#4#5{%
  \reset@font\fontsize{#1}{#2pt}%
  \fontfamily{#3}\fontseries{#4}\fontshape{#5}%
  \selectfont}%
\fi\endgroup%
\begin{picture}(4980,300)(1200,-300)
\thicklines
{\put(1801,-211){\line( 1, 0){600}}\put(2401,-211){\line( 1, 0){600}}}
{\put(1801,-211){\line(-5, 2){594.828}}}{\put(1201,-436){\line( 5, 2){594.828}}}
{\put(4801,-211){\line( 1, 0){600}}}{\put(5401,-211){\line( 5, 2){594.828}}}
{\put(5401,-211){\line( 5,-2){594.828}}}{\put(3001,-211){\line( 1, 0){600}}}
{\multiput(3601,-211)(200,0.00000){6}{\makebox(1.6667,11.6667){\SetFigFont{5}{6}{\rmdefault}{\mddefault}{\updefault}$\bullet$}}}
{\multiput(1201, 14)(0.0,-90.0){6}{\makebox(1.6667,11.6667){\SetFigFont{5}{6}{\rmdefault}{\mddefault}{\updefault}$\bullet$}}}
{\multiput(6001, 14)(0.0,-90.0){6}{\makebox(1.6667,11.6667){\SetFigFont{5}{6}{\rmdefault}{\mddefault}{\updefault}$\bullet$}}}
\put(580,0){\makebox(0,0)[lb]{\smash{{\SetFigFont{12}{14.4}{\rmdefault}{\mddefault}{\updefault}{$r-1$}}}}}
\put(980,-550){\makebox(0,0)[lb]{\smash{{\SetFigFont{12}{14.4}{\rmdefault}{\mddefault}{\updefault}{$1$}}}}}
\put(1726,-436){\makebox(0,0)[lb]{\smash{{\SetFigFont{12}{14.4}{\rmdefault}{\mddefault}{\updefault}{$2$}}}}}
\put(6126,0){\makebox(0,0)[lb]{\smash{{\SetFigFont{12}{14.4}{\rmdefault}{\mddefault}{\updefault}{$r$}}}}}
\put(6126,-511){\makebox(0,0)[lb]{\smash{{\SetFigFont{12}{14.4}{\rmdefault}{\mddefault}{\updefault}{$r-2$}}}}}
\put(2326,-436){\makebox(0,0)[lb]{\smash{{\SetFigFont{12}{14.4}{\rmdefault}{\mddefault}{\updefault}{$3$}}}}}
\put(2926,-436){\makebox(0,0)[lb]{\smash{{\SetFigFont{12}{14.4}{\rmdefault}{\mddefault}{\updefault}{$4$}}}}}
\put(3526,-436){\makebox(0,0)[lb]{\smash{{\SetFigFont{12}{14.4}{\rmdefault}{\mddefault}{\updefault}{$5$}}}}}
\put(5151,-436){\makebox(0,0)[lb]{\smash{{\SetFigFont{12}{14.4}{\rmdefault}{\mddefault}{\updefault}{$r-3$}}}}}
{\put(1801,-211){\circle*{100}}}{\put(2401,-211){\circle*{100}}}
{\put(3001,-211){\circle*{100}}}{\put(1201, 14){\circle*{100}}}
{\put(1201,-436){\circle*{100}}}{\put(3601,-211){\circle*{100}}}
{\put(5401,-211){\circle*{100}}}{\put(4801,-211){\circle*{100}}}
{\put(6001, 14){\circle*{100}}}{\put(6001,-436){\circle*{100}}}
\end{picture}
\caption{Case 1.}
\label{fig_case_1}
\end{figure}
Identifying the vertices $3$ and $4$ does not give a Dynkin diagram of a \coscorfn,
thus by Lemma \ref{remove_edge} this case is impossible (again, notice that $r-1>7$).\\
2.\ The subdiagrams to the labels $(r-1,2,\ldots,r-2,r)$ and $(1,\ldots,r-2,r)$ are both
of type $B$ or $C$. But then by Lemma \ref{remove_edge}, removing an edge in the middle
yields an irreducible root set of rank $r-1$ with a forbidden Dynkin
diagram.
\end{proof}

\begin{lemma}\label{lem5}
Let $\Gamma$ be the Dynkin diagram of an object $a$ in a \coscorf $\cC$
and $i\in I$.
\begin{enumerate}
\item $\{i,i_1,\ldots,i_k\}\subseteq I$ are connected in $\Gamma^a$
if and only if $\{i,i_1,\ldots,i_k\}$ are connected in $\Gamma^{\rho_i(a)}$.
\end{enumerate}
Let $j,k\in I$ with $|\{i,j,k\}|=3$.
\begin{enumerate} \setcounter{enumi}{1}
\item If $i$ is not connected to $j$ nor to $k$ then the connection between $j$ and $k$
(including labels) is the same in $\Gamma^a$ and $\Gamma^{\rho_i(a)}$.
\item If $i$ is connected to $j$ and $i$ is not connected to $k$ then $j$ is connected to $k$
in $\Gamma^a$ if and only if they are connected in $\Gamma^{\rho_i(a)}$.
\end{enumerate}
\end{lemma}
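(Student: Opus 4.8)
The plan is to work with the reflection $\sigma_i^a\in\Aut(\ZZ^I)$, and I write $i\sim j$ for the statement that $i$ and $j$ are connected in the diagram under consideration. By (C2) we have $c^a_{mn}=c^{\rho_i(a)}_{mn}$, so $\sigma_i^{\rho_i(a)}=\sigma_i^a$ as linear maps and $\sigma_i^a$ is an involution; moreover $\sigma_i^a(\rer a)=\rer{\rho_i(a)}$ by (R3). The single computation I use repeatedly is $\sigma_i^a(\alpha_m)=\alpha_m-c^a_{im}\alpha_i$, so that $\sigma_i^a$ fixes $\alpha_m$ exactly when $i\not\sim m$. I expect (1) to be the crux; (2) will be a direct involution argument and (3) will be deduced from (1) by elementary graph reasoning.

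For (1), set $J=\{i,i_1,\dots,i_k\}$ and $U=\sum_{m\in J}\RR\alpha_m$, and pass to the parabolic subgroupoid attached to $U$. The root subset $\rer a\cap U$ is a finite root set of rank $|J|$ by Cor.\ \ref{cor:genposk}, and by \cite[Def.\ 2.3]{p-HW-10} it is realised by a parabolic subgroupoid of $\Wg(\cC)$; the objects over $a$ and over $\rho_i(a)$ lie in one and the same connected component, since $i\in J$ and they are joined by $\sigma_i$. By Lemma \ref{co:cij} the parabolic Cartan matrix over $a$ is $(c^a_{mn})_{m,n\in J}$, hence its Dynkin diagram is precisely the subdiagram of $\Gamma^a$ induced on $J$, and likewise over $\rho_i(a)$. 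I would then invoke \cite[Prop.\ 4.6]{p-CH09a}: in a connected Cartan scheme with finite root system the Cartan matrices are simultaneously decomposable or indecomposable, equivalently the Dynkin diagram is connected at one object if and only if it is connected at every object. Applied to the connected component found above, this gives that $J$ is connected in $\Gamma^a$ if and only if it is connected in $\Gamma^{\rho_i(a)}$, which is (1).

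For (2), the hypothesis $i\not\sim j$ and $i\not\sim k$ means $c^a_{ij}=c^a_{ik}=0$, so $\sigma_i^a$ fixes $\alpha_j$ and $\alpha_k$, and hence fixes $\alpha_k+m\alpha_j$ for every $m$. Since $\sigma_i^a$ is an involution carrying $\rer a$ onto $\rer{\rho_i(a)}$, we obtain $\alpha_k+m\alpha_j\in\rer a$ if and only if $\alpha_k+m\alpha_j\in\rer{\rho_i(a)}$; Lemma \ref{co:cij} then yields $c^{\rho_i(a)}_{jk}=c^a_{jk}$ and symmetrically $c^{\rho_i(a)}_{kj}=c^a_{kj}$, so the edge between $j$ and $k$, including its labels, is unchanged. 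For (3) I argue from (1) alone: applying (1) to $\{i,j\}$, $\{i,k\}$ and $\{i,j,k\}$ shows that the edges $(i,j)$ and $(i,k)$, and the connectedness of $\{i,j,k\}$, are all preserved by $\rho_i$. The hypotheses $i\sim j$ and $i\not\sim k$ therefore persist at $\rho_i(a)$, and under them the three-vertex diagram on $\{i,j,k\}$ is connected exactly when $j\sim k$. Hence $j\sim k$ in $\Gamma^a$ iff $\{i,j,k\}$ is connected in $\Gamma^a$ iff $\{i,j,k\}$ is connected in $\Gamma^{\rho_i(a)}$ iff $j\sim k$ in $\Gamma^{\rho_i(a)}$.

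The only delicate point is (1): everything there rests on identifying the restriction to $J$ with a connected parabolic Cartan scheme and on the invariance of indecomposability of the Cartan matrix across such a scheme. Once this is secured, (2) is a one-line computation with the involution $\sigma_i^a$ and (3) is pure graph theory, so I would prove (1) first and derive (2) and (3) from it.
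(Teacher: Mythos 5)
Your proof is correct and follows essentially the same route as the paper, whose proof is just the citation ``Use [CH09a, Lemma~4.5], axiom (C2), [CH09a, Prop.~4.6]'': your part (1) is exactly the Prop.~4.6 argument (indecomposability of the Cartan matrix is constant on a connected Cartan scheme with finite root system, applied to the parabolic subgroupoid on $J$), and your involution computation in part (2) reproves the content of the cited Lemma~4.5. Deriving (3) from (1) by the three-vertex graph argument rather than from Lemma~4.5 directly is a harmless repackaging of the same ingredients.
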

\begin{proof}
Use \cite[Lemma 4.5]{p-CH09a}, axiom (C2), \cite[Prop.\ 4.6]{p-CH09a}.
\end{proof}

Now Section \ref{sec:rank_le_eight} allows us to give more details about
the Cartan schemes.

\begin{figure}[h]
\setlength{\unitlength}{3300sp}
\begingroup\makeatletter\ifx\SetFigFont\undefined%
\gdef\SetFigFont#1#2#3#4#5{%
  \reset@font\fontsize{#1}{#2pt}%
  \fontfamily{#3}\fontseries{#4}\fontshape{#5}%
  \selectfont}%
\fi\endgroup%
\begin{picture}(5580,5178)(400,-4880)
\thicklines
{\put(1201,-661){\oval(600,900)[tl]}\put(1201,-661){\oval(600,900)[bl]}}
{\put(1201,-1561){\oval(600,900)[tl]}\put(1201,-1561){\oval(600,900)[bl]}}
{\put(1201,-2461){\oval(600,900)[tl]}\put(1201,-2461){\oval(600,900)[bl]}}
{\put(1201,-2911){\oval(1200,1800)[tl]}\put(1201,-2911){\oval(1200,1800)[bl]}}
{\put(1201,-4261){\oval(600,900)[tl]}\put(1201,-4261){\oval(600,900)[bl]}}
{\put(1801,-211){\line( 1, 0){600}}\put(2401,-211){\line( 1, 0){600}}}
{\put(4201,-211){\line( 1, 0){600}}}{\put(4801,-211){\line( 5, 2){594.828}}}
{\multiput(3001,-211)(200,0.00000){6}{\makebox(1.6667,11.6667){\SetFigFont{5}{6}{\rmdefault}{\mddefault}{\updefault}$\bullet$}}}
{\put(5401,-436){\vector( 0, 1){450}}}{\put(1801,-1111){\line( 1, 0){600}}
\put(2401,-1111){\line( 1, 0){600}}}{\put(4201,-1111){\line( 1, 0){600}}}
{\put(4801,-1111){\line( 5, 2){594.828}}}
{\multiput(3001,-1111)(200,0.00000){6}{\makebox(1.6667,11.6667){\SetFigFont{5}{6}{\rmdefault}{\mddefault}{\updefault}$\bullet$}}}
{\put(5401,-886){\line( 0,-1){450}}}{\put(1801,-2011){\line( 1, 0){600}}
\put(2401,-2011){\line( 1, 0){600}}}
{\multiput(3001,-2011)(200,0.00000){6}{\makebox(1.6667,11.6667){\SetFigFont{5}{6}{\rmdefault}{\mddefault}{\updefault}$\bullet$}}}
{\put(4201,-2011){\line( 1, 0){600}}}{\put(4801,-2011){\line( 5, 2){594.828}}}
{\put(4801,-2011){\line( 5,-2){594.828}}}{\put(5401,-1786){\line( 0,-1){450}}}
{\put(1801,-2911){\line( 1, 0){600}}\put(2401,-2911){\line( 1, 0){600}}}
{\put(4201,-2911){\line( 1, 0){600}}}{\put(4801,-2911){\line( 5, 2){594.828}}}
{\put(4801,-2911){\line( 5,-2){594.828}}}
{\multiput(3001,-2911)(200,0.00000){6}{\makebox(1.6667,11.6667){\SetFigFont{5}{6}{\rmdefault}{\mddefault}{\updefault}$\bullet$}}}
{\put(1801,-3811){\line( 1, 0){600}}\put(2401,-3811){\line( 1, 0){600}}}
{\multiput(3001,-3811)(200,0.00000){6}{\makebox(1.6667,11.6667){\SetFigFont{5}{6}{\rmdefault}{\mddefault}{\updefault}$\bullet$}}}
{\put(4201,-3811){\line( 1, 0){600}}}{\put(4801,-3811){\line( 5,-2){594.828}}}
{\put(5401,-3586){\line( 0,-1){450}}}{\put(1801,-4711){\line( 1, 0){600}}
\put(2401,-4711){\line( 1, 0){600}}}
{\multiput(3001,-4711)(200,0.00000){6}{\makebox(1.6667,11.6667){\SetFigFont{5}{6}{\rmdefault}{\mddefault}{\updefault}$\bullet$}}}
{\put(4201,-4711){\line( 1, 0){600}}}{\put(4801,-4711){\line( 5,-2){594.828}}}
{\put(5401,-4486){\vector( 0,-1){450}}}
\put(1351,-286){\makebox(0,0)[lb]{\smash{{\SetFigFont{12}{14.4}{\rmdefault}{\mddefault}{\updefault}{$C$}}}}}
\put(4651,-61){\makebox(0,0)[lb]{\smash{{\SetFigFont{12}{14.4}{\rmdefault}{\mddefault}{\updefault}{$r-2$}}}}}
\put(5476,-61){\makebox(0,0)[lb]{\smash{{\SetFigFont{12}{14.4}{\rmdefault}{\mddefault}{\updefault}{$r-1$}}}}}
\put(1726,-61){\makebox(0,0)[lb]{\smash{{\SetFigFont{12}{14.4}{\rmdefault}{\mddefault}{\updefault}{$1$}}}}}
\put(2326,-61){\makebox(0,0)[lb]{\smash{{\SetFigFont{12}{14.4}{\rmdefault}{\mddefault}{\updefault}{$2$}}}}}
\put(5476,-511){\makebox(0,0)[lb]{\smash{{\SetFigFont{12}{14.4}{\rmdefault}{\mddefault}{\updefault}{$r$}}}}}
\put(5476,-286){\makebox(0,0)[lb]{\smash{{\SetFigFont{12}{14.4}{\rmdefault}{\mddefault}{\updefault}{$2$}}}}}
\put(1351,-1186){\makebox(0,0)[lb]{\smash{{\SetFigFont{12}{14.4}{\rmdefault}{\mddefault}{\updefault}{$A$}}}}}
\put(4651,-961){\makebox(0,0)[lb]{\smash{{\SetFigFont{12}{14.4}{\rmdefault}{\mddefault}{\updefault}{$r-2$}}}}}
\put(5476,-961){\makebox(0,0)[lb]{\smash{{\SetFigFont{12}{14.4}{\rmdefault}{\mddefault}{\updefault}{$r-1$}}}}}
\put(5476,-1411){\makebox(0,0)[lb]{\smash{{\SetFigFont{12}{14.4}{\rmdefault}{\mddefault}{\updefault}{$r$}}}}}
\put(1726,-961){\makebox(0,0)[lb]{\smash{{\SetFigFont{12}{14.4}{\rmdefault}{\mddefault}{\updefault}{$1$}}}}}
\put(2326,-961){\makebox(0,0)[lb]{\smash{{\SetFigFont{12}{14.4}{\rmdefault}{\mddefault}{\updefault}{$2$}}}}}
\put(1351,-2086){\makebox(0,0)[lb]{\smash{{\SetFigFont{12}{14.4}{\rmdefault}{\mddefault}{\updefault}{$D'$}}}}}
\put(4651,-1861){\makebox(0,0)[lb]{\smash{{\SetFigFont{12}{14.4}{\rmdefault}{\mddefault}{\updefault}{$r-2$}}}}}
\put(5476,-1861){\makebox(0,0)[lb]{\smash{{\SetFigFont{12}{14.4}{\rmdefault}{\mddefault}{\updefault}{$r-1$}}}}}
\put(5476,-2311){\makebox(0,0)[lb]{\smash{{\SetFigFont{12}{14.4}{\rmdefault}{\mddefault}{\updefault}{$r$}}}}}
\put(1726,-1861){\makebox(0,0)[lb]{\smash{{\SetFigFont{12}{14.4}{\rmdefault}{\mddefault}{\updefault}{$1$}}}}}
\put(2326,-1861){\makebox(0,0)[lb]{\smash{{\SetFigFont{12}{14.4}{\rmdefault}{\mddefault}{\updefault}{$2$}}}}}
\put(1351,-2986){\makebox(0,0)[lb]{\smash{{\SetFigFont{12}{14.4}{\rmdefault}{\mddefault}{\updefault}{$D$}}}}}
\put(4651,-2761){\makebox(0,0)[lb]{\smash{{\SetFigFont{12}{14.4}{\rmdefault}{\mddefault}{\updefault}{$r-2$}}}}}
\put(5476,-2761){\makebox(0,0)[lb]{\smash{{\SetFigFont{12}{14.4}{\rmdefault}{\mddefault}{\updefault}{$r-1$}}}}}
\put(5476,-3211){\makebox(0,0)[lb]{\smash{{\SetFigFont{12}{14.4}{\rmdefault}{\mddefault}{\updefault}{$r$}}}}}
\put(1726,-2761){\makebox(0,0)[lb]{\smash{{\SetFigFont{12}{14.4}{\rmdefault}{\mddefault}{\updefault}{$1$}}}}}
\put(2326,-2761){\makebox(0,0)[lb]{\smash{{\SetFigFont{12}{14.4}{\rmdefault}{\mddefault}{\updefault}{$2$}}}}}
\put(1351,-3886){\makebox(0,0)[lb]{\smash{{\SetFigFont{12}{14.4}{\rmdefault}{\mddefault}{\updefault}{$A$}}}}}
\put(4651,-3661){\makebox(0,0)[lb]{\smash{{\SetFigFont{12}{14.4}{\rmdefault}{\mddefault}{\updefault}{$r-2$}}}}}
\put(5476,-4111){\makebox(0,0)[lb]{\smash{{\SetFigFont{12}{14.4}{\rmdefault}{\mddefault}{\updefault}{$r$}}}}}
\put(5476,-3661){\makebox(0,0)[lb]{\smash{{\SetFigFont{12}{14.4}{\rmdefault}{\mddefault}{\updefault}{$r-1$}}}}}
\put(1726,-3661){\makebox(0,0)[lb]{\smash{{\SetFigFont{12}{14.4}{\rmdefault}{\mddefault}{\updefault}{$1$}}}}}
\put(2326,-3661){\makebox(0,0)[lb]{\smash{{\SetFigFont{12}{14.4}{\rmdefault}{\mddefault}{\updefault}{$2$}}}}}
\put(1351,-4786){\makebox(0,0)[lb]{\smash{{\SetFigFont{12}{14.4}{\rmdefault}{\mddefault}{\updefault}{$C$}}}}}
\put(4651,-4561){\makebox(0,0)[lb]{\smash{{\SetFigFont{12}{14.4}{\rmdefault}{\mddefault}{\updefault}{$r-2$}}}}}
\put(5476,-4561){\makebox(0,0)[lb]{\smash{{\SetFigFont{12}{14.4}{\rmdefault}{\mddefault}{\updefault}{$r-1$}}}}}
\put(5476,-5011){\makebox(0,0)[lb]{\smash{{\SetFigFont{12}{14.4}{\rmdefault}{\mddefault}{\updefault}{$r$}}}}}
\put(1726,-4561){\makebox(0,0)[lb]{\smash{{\SetFigFont{12}{14.4}{\rmdefault}{\mddefault}{\updefault}{$1$}}}}}
\put(2326,-4561){\makebox(0,0)[lb]{\smash{{\SetFigFont{12}{14.4}{\rmdefault}{\mddefault}{\updefault}{$2$}}}}}
\put(5476,-4786){\makebox(0,0)[lb]{\smash{{\SetFigFont{12}{14.4}{\rmdefault}{\mddefault}{\updefault}{$2$}}}}}
\put(376,-736){\makebox(0,0)[lb]{\smash{{\SetFigFont{12}{14.4}{\rmdefault}{\mddefault}{\updefault}{$\sigma_{r-2}$}}}}}
\put(976,-2536){\makebox(0,0)[lb]{\smash{{\SetFigFont{12}{14.4}{\rmdefault}{\mddefault}{\updefault}{$\sigma_{r-2}$}}}}}
\put(376,-4336){\makebox(0,0)[lb]{\smash{{\SetFigFont{12}{14.4}{\rmdefault}{\mddefault}{\updefault}{$\sigma_{r-2}$}}}}}
\put(376,-1636){\makebox(0,0)[lb]{\smash{{\SetFigFont{12}{14.4}{\rmdefault}{\mddefault}{\updefault}{$\sigma_{r-1}$}}}}}
\put(350,-2986){\makebox(0,0)[lb]{\smash{{\SetFigFont{12}{14.4}{\rmdefault}{\mddefault}{\updefault}{$\sigma_r$}}}}}
{\put(4801,-211){\circle*{100}}}{\put(1801,-211){\circle*{100}}}
{\put(2401,-211){\circle*{100}}}{\put(4201,-211){\circle*{100}}}
{\put(5401, 14){\circle*{100}}}{\put(5401,-436){\circle*{100}}}
{\put(3001,-211){\circle*{100}}}{\put(1801,-1111){\circle*{100}}}
{\put(2401,-1111){\circle*{100}}}{\put(3001,-1111){\circle*{100}}}
{\put(4201,-1111){\circle*{100}}}{\put(4801,-1111){\circle*{100}}}
{\put(5401,-886){\circle*{100}}}{\put(5401,-1336){\circle*{100}}}
{\put(1801,-2011){\circle*{100}}}{\put(2401,-2011){\circle*{100}}}
{\put(3001,-2011){\circle*{100}}}{\put(4201,-2011){\circle*{100}}}
{\put(4801,-2011){\circle*{100}}}{\put(5401,-1786){\circle*{100}}}
{\put(5401,-2236){\circle*{100}}}{\put(4801,-2911){\circle*{100}}}
{\put(1801,-2911){\circle*{100}}}{\put(2401,-2911){\circle*{100}}}
{\put(3001,-2911){\circle*{100}}}{\put(4201,-2911){\circle*{100}}}
{\put(5401,-2686){\circle*{100}}}{\put(5401,-3136){\circle*{100}}}
{\put(1801,-3811){\circle*{100}}}{\put(2401,-3811){\circle*{100}}}
{\put(3001,-3811){\circle*{100}}}{\put(4201,-3811){\circle*{100}}}
{\put(4801,-3811){\circle*{100}}}{\put(5401,-3586){\circle*{100}}}
{\put(5401,-4036){\circle*{100}}}{\put(1801,-4711){\circle*{100}}}
{\put(2401,-4711){\circle*{100}}}{\put(3001,-4711){\circle*{100}}}
{\put(4201,-4711){\circle*{100}}}{\put(4801,-4711){\circle*{100}}}
{\put(5401,-4936){\circle*{100}}}{\put(5401,-4486){\circle*{100}}}
\end{picture}
\caption{Dynkin diagrams for the series}
\label{fig_DD}
\end{figure}

\begin{propo}\label{prop:diagrams}
Let $\Gamma$ be the Dynkin diagram of an object $a$ in a \coscorf $\cC$ of rank $r>8$.
\begin{enumerate}
\item\label{ct_1} If $\Gamma$ is of type $B$ then $\cC$ is a standard Cartan scheme.
\item\label{ct_2} If $\Gamma$ is of type $A$, $C$ or $D$, then either $\cC$ is a standard
Cartan scheme, or there is an object of Dynkin type $D'$.
\item\label{ct_3} Assume that there is an object of Dynkin type $D'$ in $\cC$.\\
If $\Gamma$ is of type $D'$ with labels as in Fig.\ \ref{fig_DD},
then the diagrams that appear in $\cC$ are the diagrams of Fig.\ \ref{fig_DD} with the same labels,
possibly without the diagrams of type $C$ or $D$.\\
If $\cC$ has an object $a$ with diagram of type $D$ resp.\ $C$ and if there is a $j\in I$ such
that $\rho_j(a)$ is not of Dynkin type $D$ resp.\ $C$, then $j=r-2$ and $\rho_j(a)$ is of Dynkin type $D'$ resp.\ $A$.\\
The simple reflections $\sigma_{r-1}$ and $\sigma_r$ always map an object of Dynkin type
$D'$ to an object of Dynkin type $A$ and vice versa (as in Fig.\ \ref{fig_DD}).
\end{enumerate}
\end{propo}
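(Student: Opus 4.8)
The plan is to use Theorem \ref{th:diagrams}: every diagram occurring in $\cC$ is one of $A_r,B_r,C_r,D_r,D'_r$, and for $r>8$ these five share a common head $1-2-\cdots-(r-3)$, differing only in the tail on the vertices $r-2,r-1,r$. Each of the five shapes is a connected diagram associated with a single generalized Cartan matrix (once the shape is fixed, all labels are forced), so a given shape occurs with exactly one Cartan matrix, and hence two objects share their shape if and only if they share their Cartan matrix. In particular $\cC$ is standard (Definition \ref{def_standard}) if and only if all objects carry the same shape. Since $\cC$ is connected, it therefore suffices to determine, for every object $a$ and every $i\in I$, the shape of $\Gamma^{\rho_i(a)}$ and to trace reachability. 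The engine is Lemma \ref{lem5}, which asserts that reflecting at $i$ preserves the set of neighbours of $i$ and all labelled edges not meeting $i$; combined with Theorem \ref{th:diagrams} it pins the image to the five-shape list.

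First I would show that the head reflections $\sigma_1,\dots,\sigma_{r-3}$ fix the shape, and hence the matrix. For $i\le r-3$ the only alteration Lemma \ref{lem5} permits is the creation of an edge between the two neighbours of $i$, producing a triangle away from the fork vertices $r-1,r$; for $r>8$ such a diagram is none of $A_r,B_r,C_r,D_r,D'_r$, so Theorem \ref{th:diagrams} forbids it. The tail connectivity is then preserved by Lemma \ref{lem5}, and any tail label change would once more leave the list and so is likewise excluded. Thus the entire nontrivial action is concentrated in $\sigma_{r-2},\sigma_{r-1},\sigma_r$, that is, in the rank $\le 4$ root subset on $\{r-3,r-2,r-1,r\}$, which is a root set by Corollary \ref{cor:genposk}.

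The key step is to compile the complete transition table for $\sigma_{r-2},\sigma_{r-1},\sigma_r$ on the tail. By Corollary \ref{cor:genposk} the tail is a root set of rank $\le 4$, and all such root sets are listed by the rank $\le 8$ classification of Section \ref{sec:rank_le_eight} (Thm.\ \ref{th:class_upto8}); intersecting this finite list with the connectivity and label constraints of Lemma \ref{lem5} determines every image uniquely. I expect this to be the main obstacle: one must exclude every spurious edge and label change, which I would settle by a lookup in the low-rank data rather than recompute by hand. The anticipated outcome is that a $B$-tail is fixed by all three reflections, that $\sigma_{r-2}$ realises only the interchanges $A\leftrightarrow C$ and $D\leftrightarrow D'$ (or fixes the shape), and that $\sigma_{r-1},\sigma_r$ realise only $A\leftrightarrow D'$ (or fix the shape). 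The most delicate point — again decided by the rank $\le 4$ data — is that a genuine $A\leftrightarrow C$ interchange at one object forces the neighbouring $A$-object to admit a nontrivial $\sigma_{r-1}$- or $\sigma_r$-step to a $D'$-object, so that $A\leftrightarrow C$ never occurs in isolation.

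With the table in hand the three claims follow by propagating shapes through the connected groupoid. For (\ref{ct_1}), every reflection fixes a $B$-tail, so all objects carry $B_r$ with the same matrix and $\cC$ is standard. For (\ref{ct_2}), if $\Gamma$ is of type $A$, $C$ or $D$ and every reflection fixes the shape, then all matrices coincide and $\cC$ is standard; otherwise some nontrivial step occurs, and the table shows that it reaches a $D'$-object — directly in the case $D\leftrightarrow D'$, and for $A\leftrightarrow C$ via the delicate point just described, which supplies the required $A\leftrightarrow D'$ step. For (\ref{ct_3}), once a $D'$-object is present the table forces $\sigma_{r-1},\sigma_r$ to interchange $D'$ with $A$ and $\sigma_{r-2}$ to interchange $D'$ with $D$; since $\sigma_1,\dots,\sigma_{r-3}$ and the remaining tail reflections fix shapes and labels, iterating these interchanges over the connected groupoid reproduces exactly the diagrams of Figure \ref{fig_DD} with the stated labels, the $C$- and $D$-rows appearing precisely when the corresponding $\sigma_{r-2}$-steps are nontrivial and being omitted otherwise.
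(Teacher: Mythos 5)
Your reduction to the tail is the right instinct, and the head analysis (that $\sigma_1,\dots,\sigma_{r-3}$ cannot change the shape, by Lemma \ref{lem5} together with Thm.\ \ref{th:diagrams}) is fine. The gap is in the key step: the transition table for $\sigma_{r-2},\sigma_{r-1},\sigma_r$ cannot be compiled from the list of rank $\le 4$ root sets. Knowing that $\rer{a}\cap\lspan(\alpha_{r-3},\dots,\alpha_r)$ is an irreducible rank-$4$ root set with Cartan matrix of type $A_4$, $B_4$, $C_4$, $D_4$ or $D'_4$ does not determine how its reflections act, because the rank-$4$ classification contains \emph{sporadic} Cartan schemes possessing objects of exactly these Dynkin types: for instance the rank-$4$ schemes Nr.\ $4$, $7$, $8$, $10$ of Table 1 all contain objects of type $B_4$ and are not standard, so each has a $B_4$ object with a neighbour of a different type. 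If such a scheme occurred as the tail parabolic, your claim that ``a $B$-tail is fixed by all three reflections'' would fail, and nothing in your argument excludes these sporadic parabolics --- the constraints of Lemma \ref{lem5} and Thm.\ \ref{th:diagrams} only force the \emph{image} tail to again lie in $\{A_4,B_4,C_4,D_4,D'_4\}$, which the sporadic schemes can satisfy. The same problem infects your ``delicate point'' for the $A\leftrightarrow C$ case. Ruling out the sporadic low-rank behaviour requires transporting the constraint $r>8$ down to the tail, and this is precisely what your proposal omits.

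The paper does this by induction on $r$ with base case $r=8$ (where, besides $E_8$, which is killed by Lemma \ref{not_E8}, the classification leaves only the standard schemes and the series), using Lemma \ref{remove_edge}: contracting a middle edge of the long $A$-chain produces a coscorf Cartan scheme of rank $r-1$ in which the reflections $\sigma_j$ for $j$ away from the contracted edge act compatibly, so the tail transitions of $\cC$ are inherited from a rank-$(r-1)$ scheme already known to behave as claimed. Your proposal never invokes Lemma \ref{remove_edge} nor any induction on $r$, and that is the missing mechanism. (Your strategy could be repaired by taking the parabolic on the last \emph{eight} coordinates instead of four: by Thm.\ \ref{th:class_upto8} the only irreducible rank-$8$ root sets are $E_8$, the standard ones and the series, so e.g.\ a $B_8$ object forces the standard $B_8$ parabolic; but that is then essentially the paper's induction collapsed into one step, and you would still need to justify carefully that the rank-$8$ parabolic transitions lift to $\cC$.)
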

\begin{proof}
We proceed by induction on $r$ and prove (\ref{ct_1})-(\ref{ct_3}) simultaneously.
For $r=8$ and $\Gamma$ not of type $E_8$ all the above
claims hold by inspecting the resulting data of Section \ref{sec:rank_le_eight}.
Now let $r>8$.

If $\Gamma$ is of type $B$ then by induction hypothesis, Lemma \ref{remove_edge} and
Thm.\ \ref{th:diagrams}, the maps $\sigma_1,\ldots,\sigma_{r}$ map to objects of Dynkin type $B_r$,
thus $\cC$ is standard.

Assume that $\Gamma$ is of type $A$, $C$ or $D$ and that $\cC$ is not standard.
Then there is an object in $\cC$ with diagram $\Gamma$ and $j\in I$ such that applying
$\sigma_j$ leads to an object of different Dynkin type.
Choose the labels as in Fig.\ \ref{fig_DD}. Then by Lemma \ref{remove_edge},
removing the edge $(4,5)$ yields a diagram $\Gamma'$ of the same type belonging
to a Cartan scheme $\cC'$ of rank $r-1$. If $\cC'$ was standard, then the maps
$\sigma_1,\sigma_2,\sigma_7,\ldots,\sigma_r$ would preserve the diagram $\Gamma$;
but since this is also the case for $\sigma_3,\ldots,\sigma_6$ by Lemma \ref{lem5},
this would contradict the assumption that $\sigma_j$ maps to a different diagram.
Hence $\cC'$ is not standard.
Now if $\Gamma'$ is of type $A$, then by induction either $\sigma_{r-1}$ or $\sigma_{r}$
maps (in $\cC'$) to a diagram of type $D'$. But these maps are not affected by the deletion of
$(4,5)$, so $\sigma_{r-1}$ or $\sigma_r$ map $\Gamma$ to a diagram of type $D'$ in $\cC$.
If $\Gamma$ is of type $D$ then an easy calculation shows that
$j\in\{r-2,r-1,r\}$. But then using $\cC'$ we get that $j=r-2$ and that $\sigma_j$
maps to a diagram of type $D'$.

If $\Gamma$ is of type $C$, then by the same argument as for type $D$ we get to
an object of Dynkin type $A$.
We just proved that in this case an object of Dynkin type $D'$ also occurs in $\cC$.
Thus we have proved (\ref{ct_2}):
If $\Gamma$ is of type $A$, $C$ or $D$ and $\cC$ is not standard,
then there exists an object $b$ of Dynkin type $D'$.
The morphisms needed to get from $a$ to $b$ are as explained in (\ref{ct_3})
by Lemma \ref{remove_edge}.
\end{proof}

\subsection{The root systems}

Let $r\in \NN$.
Recall that we denote $\{\alpha_1,\ldots,\alpha_r\}$ the standard basis of $\ZZ^r$.
We use the following notation: For $1\le i,j\le r$, let
\[ \frto_{i,j}:=\begin{cases}
\sum_{k=i}^j\alpha_k & i\le j \\ 0 & i>j
\end{cases}. \]

\begin{defin}
Let $Z\subseteq\{1,\ldots,r-1\}$. Let $\Phi_{r,Z}$ denote the set of roots
\begin{eqnarray*}
&\frto_{i,j-1}, & 1\le i<j\le r,\\
&\frto_{i,r-2}+\alpha_r, & 1\le i< r,\\
&\frto_{i,r}+\frto_{j,r-2}, & 1\le i<j<r,\\
&\frto_{j,r}+\frto_{j,r-2}, & j\in Z.
\end{eqnarray*}

Let $Y\subseteq\{1,\ldots,r-1\}$. Let $\Psi_{r,Y}$ denote the set of roots
\begin{eqnarray*}
&\frto_{i,j}, & 1\le i\le j\le r,\\
&\frto_{i,r}+\frto_{j,r-1}, & 1\le i<j<r,\\
&\frto_{j,r}+\frto_{j,r-1}, & j\in Y.
\end{eqnarray*}

Further, denote $\Psi'_{r,Y}$ the set obtained from $\Psi_{r,Y}$ by exchanging
$\alpha_{r-1}$ and $\alpha_r$.
\end{defin}
\begin{remar}
The sets $\Phi_{r,\emptyset}$ resp.\ $\Psi_{r,\{1,\ldots,r-1\}}$ are the sets of positive roots
of the Weyl groups of type $D_r$ resp.\ $C_r$, compare \cite[VI.\ 4.6, 4.8]{b-BourLie4-6}.
\end{remar}

Recall that by Def.\ \ref{cartan_maps} we write $C_\Lambda$ for the generalized Cartan matrix
given by a set $\Lambda$.

\begin{propo}\label{DD_scheme1}
Let $Y,Z \subseteq \{1,\ldots,r-1\}$.
\begin{enumerate}
\item The Dynkin diagram of $C_{\Phi_{r,Z}}$ is of type $D'_r$ if $r-1\in Z$ and
of type $D_r$ if $r-1\notin Z$.
\item The Dynkin diagram of $C_{\Psi_{r,Y}}$ is of type $C_r$ if $r-1\in Y$ and
of type $A_r$ if $r-1\notin Y$.
\end{enumerate}
\end{propo}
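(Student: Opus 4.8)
The plan is to read off the Cartan matrix $C_\Lambda$ directly from the explicit lists defining $\Phi_{r,Z}$ and $\Psi_{r,Y}$, and then to match the resulting labeled graph against the diagrams in Fig.\ \ref{fig_DD}. The key simplification is that, by Def.\ \ref{cartan_maps}, the off-diagonal entry $c_{ij}$ depends only on the roots of $\Lambda$ whose support is contained in $\{i,j\}$: indeed $c_{ij}=-\max\{k\mid k\alpha_i+\alpha_j\in\Lambda\}$, and any such root has vanishing coordinates outside $\{i,j\}$. So the first step is to list, for each set, exactly those roots supported on at most two of the basis vectors $\alpha_1,\dots,\alpha_r$; every other element of the defining lists has support of size $\ge 3$ and is irrelevant to $C_\Lambda$. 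Note also that $\{k\mid k\alpha_i+\alpha_j\in\Lambda\}$ always contains $0$, since $\alpha_j=\frto_{j,j}\in\Lambda$, so the maximum is well defined and $c_{ij}\le 0$ as required.

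For $\Psi_{r,Y}$ I would scan the three defining families. The consecutive sums $\frto_{i,j}$ are supported on two vectors exactly when $j=i+1$, giving $\alpha_i+\alpha_{i+1}$ for $1\le i\le r-1$; the roots $\frto_{i,r}+\frto_{j,r-1}$ with $i<j<r$ always have support $\{i,\dots,r\}$ of size $\ge 3$; and $\frto_{j,r}+\frto_{j,r-1}$ has support $\{j,\dots,r\}$, which has size two only for $j=r-1$, where it equals $2\alpha_{r-1}+\alpha_r$. Hence the two-coordinate roots are $\alpha_i+\alpha_{i+1}$ for all $i$, together with $2\alpha_{r-1}+\alpha_r$ precisely when $r-1\in Y$. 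This yields $c_{i,i+1}=c_{i+1,i}=-1$ for $i\le r-2$ and $c_{ij}=0$ for $|i-j|\ge 2$, while $c_{r,r-1}=-1$ always and $c_{r-1,r}=-1$ or $-2$ according as $r-1\notin Y$ or $r-1\in Y$. The first alternative is the chain $A_r$, the second is $C_r$.

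For $\Phi_{r,Z}$ the analysis is the same in spirit but carries one extra two-coordinate root, reflecting that the penultimate node is a branch point. Scanning the four families: $\frto_{i,j-1}$ contributes $\alpha_i+\alpha_{i+1}$ for $1\le i\le r-2$; $\frto_{i,r-2}+\alpha_r$ is supported on two vectors only for $i=r-2$, giving $\alpha_{r-2}+\alpha_r$ (and for $i=r-1$ it degenerates, via $\frto_{r-1,r-2}=0$, to the single root $\alpha_r$); the family $\frto_{i,r}+\frto_{j,r-2}$ with $i<j<r$ always has support of size $\ge 3$; and $\frto_{j,r}+\frto_{j,r-2}$ has two-element support only for $j=r-1$, where again $\frto_{r-1,r-2}=0$ and the root is $\alpha_{r-1}+\alpha_r$. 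Thus the two-coordinate roots are $\alpha_i+\alpha_{i+1}$ for $1\le i\le r-2$, the root $\alpha_{r-2}+\alpha_r$, and $\alpha_{r-1}+\alpha_r$ precisely when $r-1\in Z$; since no two-coordinate root carries a coefficient $2$, every resulting edge is a single bond ($c_{ij}=c_{ji}=-1$). The edges $\{i,i+1\}$ for $i\le r-2$ together with $\{r-2,r\}$ make $r-2$ a trivalent node with neighbours $r-3,r-1,r$, which is exactly the diagram $D_r$; the additional edge $\{r-1,r\}$, present iff $r-1\in Z$, closes the triangle $\{r-2,r-1,r\}$ and gives $D'_r$.

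The only genuine work, and the step to watch, is confirming the completeness of these short lists: for each defining family one must verify that no root other than those singled out above has support inside a two-element set, and one must handle with care the degenerate instances where $\frto_{i,j}=0$ for $i>j$ collapses a displayed expression to a shorter sum (as in the $j=r-1$ cases that produce the decisive edge). Once this bookkeeping is complete, reading the Cartan matrix off as one of the diagrams of Fig.\ \ref{fig_DD} is immediate, and the dependence of the type on whether $r-1\in Z$ (resp.\ $r-1\in Y$) is precisely the dependence isolated above.
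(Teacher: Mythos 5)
Your proposal is correct and follows the same route as the paper, which disposes of this proposition with ``clear by definition'': you simply carry out explicitly the inspection of the two-coordinate roots of $\Phi_{r,Z}$ and $\Psi_{r,Y}$ that the paper leaves to the reader, and your bookkeeping (including the degenerate cases $\frto_{i,j}=0$ for $i>j$ and the identification of $2\alpha_{r-1}+\alpha_r$ as the only root forcing a label $2$) is accurate. No issues.
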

\begin{proof}
This is clear by definition.
\end{proof}
\begin{propo}\label{DD_scheme2}
Let $Y,Z \subseteq \{1,\ldots,r-1\}$ with $0\le |Y|<|Z| <r$.
Then
\begin{center}
\begin{longtable}{rcll}
$\sigma_i(\pm\Phi_{r,Z})$ &$=$& $\pm\Phi_{r,(i\:\: i+1)(Z)}$ & \quad for $i=1,\ldots,r-2$,\\
$\sigma_i(\pm\Psi_{r,Y})$ &$=$& $\pm\Psi_{r,(i\:\: i+1)(Y)}$ & \quad for $i=1,\ldots,r-2$,\\
$\sigma_i(\pm\Psi'_{r,Y})$ &$=$& $\pm\Psi'_{r,(i\:\: i+1)(Y)}$ & \quad for $i=1,\ldots,r-2$,\\
$\sigma_{r-1}(\pm\Phi_{r,Z})$ &$=$& $\pm\Phi_{r,Z}$ & \quad if $r-1\notin Z$,\\
$\sigma_{r}(\pm\Phi_{r,Z})$ &$=$& $\pm\Phi_{r,Z}$ & \quad if $r-1\notin Z$,\\
$\sigma_{r-1}(\pm\Phi_{r,Z})$ &$=$& $\pm\Psi_{r,Z\backslash\{r-1\}}$ & \quad if $r-1\in Z$,\\
$\sigma_{r}(\pm\Phi_{r,Z})$ &$=$& $\pm\Psi'_{r,Z\backslash\{r-1\}}$ & \quad if $r-1\in Z$,\\
$\sigma_{r-1}(\pm\Psi_{r,Y})$ &$=$& $\pm\Phi_{r,Y\cup\{r-1\}}$ & \quad if $r-1\notin Y$,\\
$\sigma_{r}(\pm\Psi_{r,Y})$ &$=$& $\pm\Psi_{r,Y}$ & \quad if $r-1\notin Y$,\\
$\sigma_{r-1}(\pm\Psi_{r,Y})$ &$=$& $\pm\Psi_{r,Y}$ & \quad if $r-1\in Y$,\\
$\sigma_{r}(\pm\Psi_{r,Y})$ &$=$& $\pm\Psi_{r,Y}$ & \quad if $r-1\in Y$,\\
$\sigma_{r}(\pm\Psi'_{r,Y})$ &$=$& $\pm\Phi_{r,Y\cup\{r-1\}}$ & \quad if $r-1\notin Y$,\\
$\sigma_{r-1}(\pm\Psi'_{r,Y})$ &$=$& $\pm\Psi'_{r,Y}$ & \quad if $r-1\notin Y$,\\
$\sigma_{r}(\pm\Psi'_{r,Y})$ &$=$& $\pm\Psi'_{r,Y}$ & \quad if $r-1\in Y$,\\
$\sigma_{r-1}(\pm\Psi'_{r,Y})$ &$=$& $\pm\Psi'_{r,Y}$ & \quad if $r-1\in Y$,\\
\end{longtable}
\end{center}
where in ``$\sigma_i(\Lambda)$'' the map $\sigma_i$ is the map given by $\Lambda$ as
in Def.\ \ref{cartan_maps}, $(i\:\: i+1)$ is the transposition and
$\pm\Lambda = \Lambda\cup-\Lambda$.
\end{propo}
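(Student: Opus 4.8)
The plan is to reduce the whole table to an explicit, finite computation with the linear maps $\sigma_i$ that Definition \ref{cartan_maps} attaches to the generalized Cartan matrices $C_{\Phi_{r,Z}}$, $C_{\Psi_{r,Y}}$ and $C_{\Psi'_{r,Y}}$. First I would pin down these Cartan matrices. Since the entry $c_{ij}$ depends only on which vectors $k\alpha_i+\alpha_j$ lie in the set, the long ``parameter'' roots indexed by $Z$ resp.\ $Y$ are irrelevant for $C_\Lambda$, and Lemma \ref{co:cij} together with Proposition \ref{DD_scheme1} gives that $C_{\Phi_{r,Z}}$ is the $D_r$- resp.\ $D'_r$-matrix and $C_{\Psi_{r,Y}}$, $C_{\Psi'_{r,Y}}$ the $C_r$- resp.\ $A_r$-matrix; the single edge between the nodes $r-1$ and $r$ appears exactly when $r-1\in Z$ (resp.\ $r-1\in Y$), because precisely then $\alpha_{r-1}+\alpha_r=\frto_{r-1,r}+\frto_{r-1,r-2}$ belongs to the set. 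This produces closed formulas $\sigma_i(\alpha_j)=\alpha_j-c_{ij}\alpha_i$ in each of the finitely many cases.

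Next I would use that each $\sigma_i$ is an involution: from $c_{ii}=2$ one checks $\sigma_i^2=\id$, so $\sigma_i$ is a bijection of $\ZZ^r$. Since the transposition $(i\ i+1)$ preserves the cardinality of the parameter set, and since a direct count of the four (resp.\ three) defining families shows $|\Phi_{r,Z}|=|\Psi_{r,Z\setminus\{r-1\}}|$ whenever $r-1\in Z$ (the loss of one parameter root being compensated by the larger short-root family of $\Psi$), all source and target sets in a given line have equal size. Because the sets consist of positive roots, $\pm\Lambda$ has size $2|\Lambda|$, so in every line it suffices to verify one inclusion $\sigma_i(\pm\Lambda)\subseteq\pm\Lambda'$; equality then follows from injectivity. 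I would organize this verification by the shape of a root, simplifying with the telescoping identities $\frto_{p,q}=\frto_{p,m}+\frto_{m+1,q}$. For the interior reflections $i\le r-2$ the node $i$ lies on the type-$A$ chain $1,\dots,r-2$, so $\sigma_i$ only modifies the coefficient of $\alpha_i$; a uniform case distinction according to whether $i$, $i+1$, or neither is an endpoint of the support interval of a given $\frto$ shows that $\sigma_i$ permutes the roots of a fixed family and realizes exactly the transposition $(i\ i+1)$ on the parameter, settling the first three lines for all three families at once.

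For the boundary reflections $i\in\{r-1,r\}$ I would split according to the Dynkin type. When $r-1\notin Z$ the matrix is of type $D_r$, the maps $\sigma_{r-1},\sigma_r$ are the ordinary $D_r$-reflections, and they permute the $D_r$-roots up to sign while fixing every remaining parameter root $\frto_{j,r}+\frto_{j,r-2}$ (here $j\le r-2$); hence $\sigma_{r-1}(\pm\Phi_{r,Z})=\sigma_r(\pm\Phi_{r,Z})=\pm\Phi_{r,Z}$, and the corresponding $\Psi$- and $\Psi'$-lines follow identically. The substantive case is $r-1\in Z$, where the extra edge makes $\sigma_{r-1}$ and $\sigma_r$ differ from the $D_r$-reflections and convert the $D$-shaped tail into the $C$- resp.\ $A$-shaped tail: here I would apply the explicit $\sigma_{r-1}$ and $\sigma_r$ to each shape and check that the images reassemble into the $\Psi$- resp.\ $\Psi'$-families with parameter $Z\setminus\{r-1\}$, the key point being that the unique parameter root $\alpha_{r-1}+\alpha_r$ is precisely the root sent to a sign of $\alpha_r$ resp.\ $\alpha_{r-1}$. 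I expect this last case to be the main obstacle, since one must simultaneously track the reshuffling of the coefficient-two roots and the correct deletion of $r-1$ from the parameter, and confirm that the $\pm$-closed image equals $\Psi_{r,Z\setminus\{r-1\}}$ resp.\ $\Psi'_{r,Z\setminus\{r-1\}}$ on the nose rather than differing by a spurious long root.
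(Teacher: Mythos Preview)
Your proposal is correct and follows essentially the same approach as the paper: both reduce the statement to a direct case-by-case verification of how the explicitly described reflections $\sigma_i$ act on the finitely many root shapes, with the key observation being that for $i\le r-2$ the reflection $\sigma_i$ permutes the parameter roots $\beta_j=\frto_{j,r}+\frto_{j,r-2}$ (resp.\ their $\Psi$-analogues) via the transposition $(i\ i+1)$. The paper's own proof is extremely terse (it records this transposition action and then declares the remainder ``an easy (although tiring) calculation''), whereas you supply a more detailed roadmap---pinning down the Cartan matrices first, using bijectivity and a cardinality count to reduce to one inclusion, and isolating the $r-1\in Z$ boundary case as the substantive step---but the underlying strategy is identical.
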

\begin{proof}
Let $\beta_j:=\frto_{j,r}+\frto_{j,r-2}$.
Then one computes (at $\Phi_{r,Z}$)
\[ \sigma_i(\beta_j) = \begin{cases}
\beta_j & i\notin\{j-1,j\} \\
\beta_{j-1} & i=j-1 \\
\beta_{j+1} & i=j
\end{cases} \]
for all $i=1,\ldots,r-2$ and $j=1,\ldots,r-1$.
So $\sigma_1,\ldots,\sigma_{r-2}$ act as transpositions on
$\beta_1,\ldots,\beta_{r-1}$. The situation is similar for $\Psi$ and $\Psi'$.
The other claims are an easy (although tiring) calculation.
\end{proof}

Prop.\ \ref{DD_scheme1}, Prop.\ \ref{DD_scheme2}, and Def.\ \ref{cartan_maps} immediately give:

\begin{corol}\label{phi_coscorf}
Let $Z\subseteq\{1,\ldots,r-1\}$. Then there exists a \coscorf $\cC$ such that
$\rer{a}_+=\Phi_{r,Z}$ for an object $a$.
\end{corol}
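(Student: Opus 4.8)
The plan is to build the \coscorf by hand from the explicit sets and then invoke the recognition of finite root systems. First I would take as objects the members of the family $\mathcal{F}=\{\pm\Phi_{r,Z'},\pm\Psi_{r,Y},\pm\Psi'_{r,Y}\}$ lying in the orbit of $\pm\Phi_{r,Z}$ under the maps $\sigma_1,\dots,\sigma_r$; by Prop.\ \ref{DD_scheme2} this family is closed under all $\sigma_i$, so each $\sigma_i$ induces an involution $\rho_i$ on the set $A$ of these objects via $\sigma_i(R^a)=R^{\rho_i(a)}$. To an object $a$ with positive part $R^a_+\in\mathcal{F}$ I attach the matrix $C^a:=C_{R^a_+}$ of Def.\ \ref{cartan_maps}; by Prop.\ \ref{DD_scheme1} this is a generalized Cartan matrix (of type $D_r$, $D'_r$, $A_r$ or $C_r$), and its reflections are exactly the $\sigma_i$ used above. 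This yields a quadruple $\cC=\cC(I,A,(\rho_i)_i,(C^a)_a)$ with $I=\{1,\dots,r\}$.

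Next I would check that $\cC$ is a Cartan scheme and that $(R^a)_{a\in A}$ is a finite root system of type $\cC$. Axiom (C1) holds because each $\sigma_i$ is an involution of $\ZZ^r$, whence $\rho_i^2=\id$ on $A$; (C2) holds because, by Lemma \ref{co:cij} together with (R3), the $i$th rows of $C^a$ and $C^{\rho_i(a)}$ coincide. For the root system axioms, (R1) and (R2) are read off directly from the defining lists (every root is a nonnegative combination of the $\alpha_k$, and the only multiples of a single $\alpha_i$ occurring are $\pm\alpha_i$), (R3) is precisely Prop.\ \ref{DD_scheme2}, and each $R^a$ is finite. The one axiom that requires work is (R4): one must verify $(\rho_i\rho_j)^{m^a_{ij}}(a)=a$ for all $i,j$. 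Here the explicit action of Prop.\ \ref{DD_scheme2} is decisive: for $i,j\le r-2$ the maps $\rho_i,\rho_j$ act on the parameter $Z$ (resp.\ $Y$) as the transpositions $(i\ i{+}1),(j\ j{+}1)$, so $\rho_i\rho_j$ has the order of their product in $S_{r-1}$, which matches $m^a_{ij}$ ($2$ for non-adjacent and $3$ for adjacent nodes); the finitely many remaining pairs involving $r-1$ or $r$ are checked case by case against the rank-two subsystems visible in the diagrams of Prop.\ \ref{DD_scheme1}. Once (R1)--(R4) hold and $R^a$ is finite, \cite[Prop.\,2.12]{p-CH09a} gives $\rsC=\rsC\re$, so $R^a=\rer{a}$; in particular $\rer{a}_+=\Phi_{r,Z}$ at the object $a$ with $R^a_+=\Phi_{r,Z}$.

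Finally I would secure the two standing hypotheses of a \coscorfn. Connectedness of $\cC$ follows from Prop.\ \ref{DD_scheme2}: the transpositions $(i\ i{+}1)$, $i\le r-2$, generate $S_{r-1}$ and hence move $Z$ through all subsets of a fixed size, while $\sigma_{r-1},\sigma_r$ pass between the $\Phi$-, $\Psi$- and $\Psi'$-sets, so every object is reachable from $a$. The scheme $\cC$ just built is the minimal quotient (its objects are in bijection with the distinct root systems) and need not be simply connected; to finish I would pass to its simply connected cover $\tilde\cC$ in the sense of the coverings of \cite[Def.\ 3.1]{p-CH09b}. A covering is a local isomorphism, so $\tilde\cC$ is connected, simply connected, and carries at each object the same Cartan matrices, and hence the same finite real root system, as the corresponding object of $\cC$. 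Thus $\tilde\cC$ is a \coscorf possessing an object $\tilde a$ with $\rer{\tilde a}_+=\Phi_{r,Z}$, which is the assertion. I expect the main obstacle to be the bookkeeping in (R4) for the reflections $\sigma_{r-1},\sigma_r$ at the branch/double-bond end of the diagram; everything else is immediate from Prop.\ \ref{DD_scheme1}, Prop.\ \ref{DD_scheme2}, and Def.\ \ref{cartan_maps}.
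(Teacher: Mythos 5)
Your proof is correct and follows essentially the same route as the paper, whose entire proof consists of the remark that Prop.~\ref{DD_scheme1}, Prop.~\ref{DD_scheme2} and Def.~\ref{cartan_maps} ``immediately give'' the statement; you have simply written out the implicit verification (building the quotient scheme on the orbit of $\pm\Phi_{r,Z}$, checking (C1)--(C2) and (R1)--(R4) via the explicit formulas of Prop.~\ref{DD_scheme2}, invoking \cite[Prop.\,2.12]{p-CH09a}, and passing to the simply connected cover). The only place you go beyond the paper is in flagging the case-by-case check of (R4) at the vertices $r-1$, $r$, which is a fair account of where the routine work actually lies.
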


\begin{remar}
The Dynkin diagrams of the \coscorf of Cor.\ \ref{phi_coscorf} and their connections are
given by Fig.\ \ref{fig_DD}.
The connections $\sigma_{r-2}$ in the figure depend on $Z$ resp.\ $Y$. For example $\sigma_{r-2}$
maps an object $\Phi_{r,Z}$ of Dynkin type $D$ to an object of Dynkin type $D'$ if and only if $r-2 \in Z$;
if $\Psi_{r,Y}$ is of Dynkin type $C$ (as in the first diagram of Fig.\ \ref{fig_DD}) then
$\sigma_{r-2}$ maps to an object of Dynkin type $A$ if and only if $r-2 \notin Z$.
\end{remar}

\begin{propo}\label{Z1_Z2}
Let $Z_1,Z_2,Y_1,Y_2\subseteq\{1,\ldots,r-1\}$ with $|Z_1|=|Z_2|=|Y_1|+1=|Y_2|+1$.
Then there exists a \coscorf $\cC$ with objects $a,b,c,d$ such that
$\rer{a}_+=\Phi_{r,Z_1}$, $\rer{b}_+=\Phi_{r,Z_2}$, $\rer{c}_+=\Psi_{r,Y_1}$ and $\rer{d}_+=\Psi'_{r,Y_2}$.
\end{propo}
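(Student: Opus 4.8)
Set $m:=|Z_1|=|Z_2|=|Y_1|+1=|Y_2|+1$. Since $|Y_1|\ge 0$ we have $m\ge 1$, and since every parameter set lies in $\{1,\ldots,r-1\}$ we have $m\le r-1<r$; thus the cardinality hypothesis $0\le|Y|<|Z|<r$ of Prop.~\ref{DD_scheme2} holds for all the sets that occur below. The plan is to produce a single scheme containing $a$ and then to locate $b$, $c$, $d$ inside it as endpoints of suitable morphisms. By Cor.~\ref{phi_coscorf} I may choose a \coscorf $\cC$ together with an object $a$ such that $\rer{a}_+=\Phi_{r,Z_1}$. Since $\cC$ is connected, it suffices to show that objects with positive roots $\Phi_{r,Z_2}$, $\Psi_{r,Y_1}$ and $\Psi'_{r,Y_2}$ are reachable from $a$. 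The key observation is that, by the remark after Def.~\ref{cartan_maps} (which rests on Lemma~\ref{co:cij}), the groupoid reflection $\sigma_i^e$ at an object $e$ equals the combinatorial map $\sigma_i$ attached to $\rer{e}_+$; hence the action of any morphism on positive roots is computed entirely by the formulas of Prop.~\ref{DD_scheme2}.

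First I would move within the $\Phi$-family. For $i=1,\ldots,r-2$ the map $\sigma_i$ sends $\Phi_{r,Z}$ to $\Phi_{r,(i\:\:i+1)(Z)}$, so along morphisms these reflections act on the parameter by the adjacent transpositions of $\{1,\ldots,r-1\}$, which generate $S_{r-1}$. Writing a permutation carrying $Z_1$ to $Z_2$ (one exists as $|Z_1|=|Z_2|$) as a product of adjacent transpositions and composing the corresponding reflections starting at $a$, I reach an object $b$ with $\rer{b}_+=\Phi_{r,Z_2}$.

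Next I would cross into the $\Psi$- and $\Psi'$-families. Because $m\ge 1$, the previous step lets me reach from $a$ an object whose positive roots are $\Phi_{r,Z}$ for some $Z$ with $r-1\in Z$. At such an object $\sigma_{r-1}$ produces $\pm\Psi_{r,Z\setminus\{r-1\}}$ and $\sigma_{r}$ produces $\pm\Psi'_{r,Z\setminus\{r-1\}}$, both with a parameter of cardinality $m-1$. Since $\sigma_1,\ldots,\sigma_{r-2}$ again realise all of $S_{r-1}$ on the parameters of $\Psi_{r,\bullet}$ and of $\Psi'_{r,\bullet}$, and $|Y_1|=|Y_2|=m-1$, I can continue along appropriate morphisms to objects $c$ and $d$ with $\rer{c}_+=\Psi_{r,Y_1}$ and $\rer{d}_+=\Psi'_{r,Y_2}$. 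This exhibits all four objects inside the one \coscorf $\cC$.

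The only substantive point is the bookkeeping forced by Prop.~\ref{DD_scheme2}: one must check that along every morphism used, the $\Phi$-parameters stay at cardinality $m$ and the $\Psi$- and $\Psi'$-parameters stay at cardinality $m-1$, so that the three target sets lie in the single groupoid orbit of $\Phi_{r,Z_1}$. The hypothesis $|Z_1|=|Z_2|=|Y_1|+1=|Y_2|+1$ is precisely what places $\Phi_{r,Z_2}$, $\Psi_{r,Y_1}$ and $\Psi'_{r,Y_2}$ simultaneously in this orbit; beyond the reflection formulas already established, no further estimate is required.
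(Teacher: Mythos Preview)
Your proof is correct and follows essentially the same approach as the paper: use Prop.~\ref{DD_scheme2} to see that $\sigma_1,\ldots,\sigma_{r-2}$ act as the full symmetric group $S_{r-1}$ on the parameter sets $Z$ (resp.\ $Y$), and use $\sigma_{r-1}$, $\sigma_r$ to cross between the $\Phi$-, $\Psi$-, and $\Psi'$-families. The paper's proof is very terse (it handles only the passage from $Z_1$ to $Z_2$ explicitly and says ``similar'' for the rest), whereas you spell out the invocation of Cor.~\ref{phi_coscorf} for existence of $\cC$, the cardinality bookkeeping needed for Prop.~\ref{DD_scheme2}, and the explicit crossing into $\Psi_{r,\bullet}$ and $\Psi'_{r,\bullet}$ via an object with $r-1\in Z$; but the underlying argument is the same.
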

\begin{proof}
By Prop.\ \ref{DD_scheme2}, $\sigma_1,\ldots,\sigma_{r-2}$ act as transpositions
on $\{1,\ldots,r-1\}$ and generate the group $\Sym(\{1,\ldots,r-1\})$.
Thus for the given $Z_1$, $Z_2$ there exists a product of $\sigma_i$'s, $i=1,\ldots,r-2$
mapping $\Phi_{r,Z_1}$ to $\Phi_{r,Z_2}$.
The proof for the other assertions is similar.
\end{proof}

\begin{remar}
The \coscorf which has the root systems $\Phi_{r,\{1\}}$ and $\Psi_{r,\emptyset}$
has no object with Cartan matrix of type $C_r$.
The \coscorf which has the root systems $\Phi_{r,\{1,\ldots,r-1\}}$ and $\Psi_{r,\{1,\ldots,r-2\}}$
has no object with Cartan matrix of type $D_r$.
\end{remar}

\begin{defin}
Let $\cC$ be a \coscorf of rank $r$.
If there exists a $Z\subseteq\{1,\ldots,r-1\}$ such that $\Phi_{r,Z}=\rer{a}_+$
for some object $a$, then we say that $\cC$ {\it is of type} $D'(r,|Z|)$.
If there exists a $Y\subseteq\{1,\ldots,r-1\}$ such that $\Psi_{r,Y}=\rer{a}_+$
for some object $a$, then we say that $\cC$ {\it is of type} $D'(r,|Y|+1)$.

Notice that this is well-defined by Prop.\ \ref{Z1_Z2}.
Thus if $\cC$ is of type $D'(r,0)$ then it is standard of type $D$ and
if $\cC$ is of type $D'(r,r)$ then it is standard of type $C$.
\end{defin}

\begin{theor}
Let $Z\subseteq\{1,\ldots,r-1\}$, $\cC$ be the \coscorf with $\rer{a}_+=\Phi_{r,Z}$ for an object $a$,
and $m:=|Z|$. Then $\Aut(\cC)$ is isomorphic to a reflection group of type
$C_{m}\times D_{r-m}$, where $D_1:=C_0:=$trivial group, $D_2:=A_1\times A_1$, $D_3:=A_3$, $C_1:=A_1$.
\end{theor}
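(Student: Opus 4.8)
The plan is to read $\Aut(\cC,a)$ directly off the explicit combinatorial model of the groupoid furnished by Prop.\ \ref{DD_scheme2} and Prop.\ \ref{Z1_Z2}. By Def.\ \ref{auto} an element of $\Aut(\cC,a)$ is the linear map of a morphism $a\to b$ with $R^b=R^a=\pm\Phi_{r,Z}$, and since $\cC$ is simply connected there is exactly one such morphism for each object $b$ carrying the root system $\Phi_{r,Z}$; hence $\Aut(\cC,a)$ is a subgroup of the stabiliser of $\pm\Phi_{r,Z}$ in $\GL(\ZZ^r)$, and its order equals the number of those objects. The bookkeeping device is the family of roots $\beta_j:=\frto_{j,r}+\frto_{j,r-2}$ ($1\le j\le r-1$) from the proof of Prop.\ \ref{DD_scheme2}: one has $\beta_j\in\Phi_{r,Z}$ iff $j\in Z$, and $\sigma_1,\dots,\sigma_{r-2}$ act on $\beta_1,\dots,\beta_{r-1}$ as the transpositions $(j\ j{+}1)$, so these morphisms realise $\Sym(\{1,\dots,r-1\})$ on the index set, while the transitions to the objects $\Psi_{r,Y},\Psi'_{r,Y}$ are governed by $\sigma_{r-1},\sigma_r$ exactly as tabulated there.

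First I would produce generators. A product of the $\sigma_i$ ($i\le r-2$) that permutes $Z$ into itself is a loop, and these loops realise $\Sym(Z)\times\Sym(\{1,\dots,r-1\}\setminus Z)$. Next, several single maps $\sigma_i$ are already automorphisms: by Prop.\ \ref{DD_scheme2} the reflection $\sigma_r$ fixes $\Psi_{r,Y}$ when $r-1\notin Y$, and $\sigma_{r-1},\sigma_r$ fix $\Phi_{r,Z}$ when $r-1\notin Z$. Transporting such reflection--automorphisms back to $a$ by the permutation loops above yields, on the $m$ directions indexed by $Z$, a sign change attached to a long root $\beta_j$ ($j\in Z$); together with $\Sym(Z)$ these generate the full hyperoctahedral group, i.e.\ a reflection group of type $C_m$. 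On the remaining $r-m$ directions --- the $r-1-|Z|$ indices outside $Z$ together with the distinguished direction created by the $\Psi$/$\Psi'$ transition --- only short roots occur, and I would check that the available loops generate the Weyl group of type $D_{r-m}$ (permutations together with the even sign changes coming in pairs from each excursion $\Phi\to\Psi\to\Phi$). Since the two blocks of directions are disjoint, the subgroups commute, giving $\Aut(\cC,a)\supseteq W(C_m)\times W(D_{r-m})$.

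The hard part is the reverse inclusion, i.e.\ ruling out the extra (odd) sign changes on the complementary block that would enlarge $D_{r-m}$ to $C_{r-m}$. The structural reason is that $\{1,\dots,r\}\setminus Z$ supports no long root, so no single reflection can flip a single direction there. I would make this precise via a $\ZZ/2$-valued invariant, namely the sign character on the complementary block defined on the whole stabiliser of $\pm\Phi_{r,Z}$ in $\GL(\ZZ^r)$, and show using the edge list of Prop.\ \ref{DD_scheme2} that it is respected along every path of the groupoid and hence trivial on every loop; concretely, every sign change introduced through $\sigma_{r-1},\sigma_r$ is even on these directions. Equivalently, and perhaps more cleanly, one checks that the reflection--automorphisms found above satisfy precisely the Coxeter relations of $C_m\times D_{r-m}$ and that the resulting map $W(C_m\times D_{r-m})\to\Aut(\cC,a)$ is injective because the action on $\Phi_{r,Z}$ is faithful, whence a count forces surjectivity. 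I expect this verification of ``no further sign changes,'' that is, the $D$-versus-$C$ dichotomy on the complementary block, to be the main obstacle.

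Finally I would treat the degenerate ranges of $m$ straight from the definitions, matching them with the stated conventions $C_0=D_1=1$, $C_1=A_1$, $D_2=A_1\times A_1$, $D_3=A_3$. For the extreme values $m=0$ and $m=r-1$ this recovers the standard types $D_r$ and $C_{r-1}$, which serves as a convenient consistency check on the generator count.
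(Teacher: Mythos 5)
Your lower bound is essentially the paper's: reduce to $Z=\{1,\dots,m\}$ via Prop.~\ref{Z1_Z2}, observe that the loops among $\sigma_1,\dots,\sigma_{r-2}$ give $S_m\times S_{r-m-1}$ and that $\sigma_{m+1},\dots,\sigma_r$ generate a $W(D_{r-m})$, then transport a $W(C_m)$ from a $\Psi$-object back to $a$. The genuine gap is the upper bound, which is the actual content of the theorem. Your primary proposal --- a $\ZZ/2$-valued sign character on the complementary block that is ``respected along every path'' --- is only a sketch, and you yourself flag it as the main obstacle. Making it precise is not routine: the character is naturally defined only on signed permutation matrices with respect to a fixed coordinate realization of $\pm\Phi_{r,Z}$ as $D_r\cup\{\pm 2e_j\}_{j\in Z}$, whereas a general morphism of the groupoid passes through objects of type $\Psi_{r,Y}$ and $\Psi'_{r,Y}$ whose realizations differ (indeed $\sigma_r$ at a $D'$-object sends $\alpha_{r-1}\mapsto \alpha_{r-1}+\alpha_r$, which is not a signed permutation in the source coordinates). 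So one must define a compatible family of characters at \emph{all} objects and verify multiplicativity across every edge of Prop.~\ref{DD_scheme2}, in particular through the $\Phi\leftrightarrow\Psi,\Psi'$ transitions; none of this is carried out. One would also need to identify the full $\GL(\ZZ^r)$-stabilizer of $\pm\Phi_{r,Z}$ as $W(C_m)\times W(C_{r-m})$ for the index-$2$ conclusion to close the argument. Your fallback (``injectivity plus a count forces surjectivity'') is circular as stated, because the count of objects is never supplied.

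For contrast, the paper closes the upper bound by exactly that count: since $\cC$ is simply connected, $|A|=|\rsC_+|\cdot|\Aut(\cC,a)|$ with $|\rsC_+|=\binom{r-1}{m-1}+\binom{r}{m}$, and it proves $|A|=2^{r-1}(m+r)(r-1)!$ by induction on $r$ and $m$ via an explicit decomposition of $\Hom(\Wg(\cC),a)$ into cosets of a rank-$(r-1)$ parabolic subgroupoid of types $D'(r-1,m-1)$ and $D'(r-1,m)$. Either your character argument must be completed in full, or this object count must be supplied; as written, the proposal establishes only $W(C_m)\times W(D_{r-m})\le\Aut(\cC)$.
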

\begin{proof}
By Prop.\ \ref{Z1_Z2} we may assume $Z=\{1,\ldots,m\}$.
Let $\beta_j:=\frto_{j,r}+\frto_{j,r-2}$ where $j=1,\ldots,r-1$ as in Prop.\ \ref{DD_scheme2}.
Assume first that $1<m<r-1$. Then $\sigma^a_m$ is the only simple reflection
which maps to an object with a different root system.
The maps $\sigma^a_1,\ldots,\sigma^a_{m-1}$ permute $\beta_1,\ldots,\beta_m$ and
$\sigma^a_{m+1},\ldots,\sigma^a_{r}$ generate a reflection group of type $D_{r-m}$.
Write $W(X_m)$ for the reflection group of type $X_m$. Then we have at least
$S_m\times W(D_{r-m})\le \Aut(\cC)$.

By Prop.\ \ref{Z1_Z2} there is a morphism $w$ leading to an object $b$ with
$\rer{b}_+=\Psi_{r,\{r-m+1,\ldots,r-1\}}$.
At $b$, $\sigma^d_{r-m+1},\ldots,\sigma^d_r$ generate the group $W(C_m)$ and all these
morphisms map to objects with the same root system. Conjugating this group back to $a$,
we get $W(C_m)\times W(D_{r-m})\le \Aut(\cC)$. We obtain the same result for
$m=1$ and $m=r-1$ similarly.

It remains to check that there are no more morphisms. We achieve this by
counting all morphisms to a fixed object $a$, i.e.\ by determining $n:=|\Hom(\Wg(\cC),a)|$.
Since the Cartan scheme is connected and simply connected, $n$ is the number of objects.
Now $|W(C_m)\times W(D_{r-m})|=2^m m! 2^{r-m-1} (r-m-1)!$ and we have
$\binom{r-1}{m-1}+\binom{r}{m}$ different root sets. Thus we must prove
$n = 2^{r-1}(m+r)(r-1)!$.

We proceed by induction on $r$ and $m$ and choose the object $a$ with
$\rer{a}_+=\Phi_{r,\{1,\ldots,m\}}$.
For $r\le 2$ the formula is an easy verification.
For $m=0$ the set $\pm \rer{a}_+$ is a root system of type $D$, thus $n=|W(D_r)|=2^{r-1}r!$.
Now let $m>0$. Write $J=\{2,\ldots,r\}$ and $\Wg_J(\cC)$ for the parabolic subgroupoid
of rank $r-1$ to $J$.
One can check that $\Hom(\Wg(\cC),a)$ is the union of the following ``cosets'':
\begin{center}
\begin{tabular}{l l c l}
(1)& $\id ^a \sigma_i \sigma_{i-1} \ldots \sigma_1 \Wg_J(\cC)$ &\quad& $i=0,\ldots,m-1$, \\
(2)& $\id ^a \sigma_i \sigma_{i-1} \ldots \sigma_1 \Wg_J(\cC)$ &\quad& $i=m,\ldots,r-1$, \\
(2)& $\id ^a \sigma_i \sigma_{i+1} \ldots \sigma_r \sigma_{r-2} \sigma_{r-3} \ldots \sigma_1
      \Wg_J(\cC)$ &\quad& $i=r,\ldots,m+1$, \\
(1)& $\id ^a \sigma_i \sigma_{i+1} \ldots \sigma_{r-1} \sigma_{r} \sigma_{r-1} \ldots \sigma_1
      \Wg_J(\cC)$ &\quad& $i=m,\ldots,1$.
\end{tabular}
\end{center}
Hereby, the parabolic subgroupoids $\Wg_J(\cC)$ are of type $D'(r-1,m-1)$ resp.\ $D'(r-1,m)$
in the rows labeled (1) resp.\ (2). Remark that for $m=r-1$, $\Wg_J(\cC)$ is of type
$D'(r-1,r-1)$ in the rows labeled (2); this is standard of type $C$ and has $2^{r-1}(r-1)!$
morphisms.
Hence by induction $n = 2m 2^{r-2}(m+r-2)(r-2)! + 2(r-m) 2^{r-2}(m+r-1)(r-2)!
= 2^{r-1}(m+r)(r-1)!$.
\end{proof}

Our goal is now to prove that the above \coscorfs are the only ones with
an object of Dynkin type $D'$ in rank $\ge 9$.

\begin{propo}\label{DD_rootsD}
Let $r\ge 8$ and let $\cC$ be a \coscorf of rank $r$.
Let $a$ an object of $\cC$ and assume that $\Gamma^a$ is of type $D'_r$.
Then $\Phi_{r,\{r-1\}}\subseteq \rer{a}_+$.
\end{propo}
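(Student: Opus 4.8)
The plan is to show that each element of $\Phi_{r,\{r-1\}}$ lies in $\rer{a}_+$, organising the roots by whether all of their coefficients in the basis $\alpha_1,\dots,\alpha_r$ are at most $1$. Recall that $\Gamma^a$ of type $D'_r$ means the chain $1$–$2$–$\cdots$–$(r-2)$ together with a triangle on $\{r-2,r-1,r\}$, so that $c^a_{kl}=-1$ for every edge $\{k,l\}$ of this diagram. The members $\frto_{i,j-1}$, $\frto_{i,r-2}+\alpha_r$, $\alpha_{r-1}+\alpha_r$, and $\frto_{i,r}$ (the last being the case $j=r-1$ of the third family, since $\frto_{r-1,r-2}=0$) are multiplicity free, whereas the remaining third-family roots $\gamma_{i,j}:=\frto_{i,r}+\frto_{j,r-2}$ with $i<j\le r-2$ carry coefficient $2$ on the block $\{j,\dots,r-2\}$.

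For the multiplicity-free roots I would argue uniformly. Each such root equals the sum $\sum_{k\in J}\alpha_k$ over its support $J$, and in every case $J$ is connected in $\Gamma^a$: the chain handles the intervals $\frto_{i,j-1}$ and $\frto_{i,r-2}$, while the triangle connects both $r-1$ and $r$ to $r-2$, so that $\frto_{i,r-2}+\alpha_r$, $\alpha_{r-1}+\alpha_r$ and $\frto_{i,r}$ all have connected support. Passing to the parabolic subgroupoid on $J$ (as in Lemma \ref{remove_edge}), whose simple roots at the object corresponding to $a$ are $\{\alpha_k\mid k\in J\}$ and whose root system is a subset of $\rer{a}$, Prop.\ \ref{indec_sum} applies and yields $\sum_{k\in J}\alpha_k\in\rer{a}_+$. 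This settles every multiplicity-free member of $\Phi_{r,\{r-1\}}$.

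The coefficient-$2$ roots $\gamma_{i,j}$ are the genuine difficulty, since Prop.\ \ref{indec_sum} only ever produces sums in which each simple root of a parabolic occurs once. To reach them I would use reflections, relying on the local fact that at an object $b$ with the chain/triangle neighbours around a vertex $k$ the $\alpha_k$-coefficient of $\sigma_k^b(\beta)$ equals $-\beta_k+\sum_{l\sim k}\beta_l$. Applied at $k=r-2$ to $\frto_{i,r}$ this gives $\frto_{i,r}+\alpha_{r-2}=\gamma_{i,r-2}$; since (C2) guarantees that $\sigma_{r-2}$ has the same matrix at $a$ and at $\rho_{r-2}(a)$, while $\frto_{i,r}\in\rer{\rho_{r-2}(a)}_+$ by the connected-support argument (the parabolic on $\{i,\dots,r\}$ is irreducible, hence connected at every object), the morphism $\sigma_{r-2}^{\rho_{r-2}(a)}\in\Hom(\rho_{r-2}(a),a)$ carries $\frto_{i,r}$ to $\gamma_{i,r-2}\in\rer{a}_+$. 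For $j<r-2$ one iterates, reflecting successively at $r-2,r-3,\dots,j$, each step enlarging the coefficient-$2$ block by one vertex, so that $\gamma_{i,j}$ appears as the image of $\frto_{i,r}$ under the morphism $\sigma_{r-2}\sigma_{r-3}\cdots\sigma_j$ into $a$.

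The main obstacle is precisely the bookkeeping of this reflection sequence: each $\sigma_k$ is applied at a different object whose Cartan matrix may differ from $C^a$, so one must check that the chain neighbours of the reflected vertex remain as in Fig.\ \ref{fig_DD} at each intermediate object, which is what keeps the coefficient computation above valid. I expect this to be governed by Prop.\ \ref{prop:diagrams} together with Lemma \ref{lem5}, which confine the reachable objects to Dynkin types $A,C,D,D'$ and describe how $\sigma_1,\dots,\sigma_{r-2}$ act on the branch structure. An alternative, possibly cleaner, route to the $\gamma_{i,j}$ is induction on $r$ via the contraction of Lemma \ref{remove_edge}: contracting a middle edge of the long chain (available since $r-2\ge 7$) produces a rank $r-1$ coscorf of type $D'_{r-1}$, whose roots $\Phi_{r-1,\{r-2\}}$ lift along the embedding to those members of $\Phi_{r,\{r-1\}}$ with equal coefficients on the contracted pair; as $\gamma_{i,j}$ has equal consecutive chain-coefficients except across $(j-1,j)$, varying the contracted edge recovers all of them, with the base case $r=8$ read off from the rank-$8$ classification of Section \ref{sec:rank_le_eight}.
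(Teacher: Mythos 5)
Your argument is sound but follows a genuinely different route from the paper's. The paper proves the proposition by induction on $r$: since the subdiagram on $\{2,\dots,r\}$ is again of type $D'$, the induction hypothesis gives $M:=\Phi_{r,\{r-1\}}\cap\langle\alpha_2,\dots,\alpha_r\rangle\subseteq\rer{a}_+$, and the remaining roots (those with $\alpha_1$ in their support) are recovered in one stroke by observing that $M\cup\sigma_1(\{\gamma\in M\mid a_2\neq 0\})\cup\{\alpha_1,\beta\}=\Phi_{r,\{r-1\}}$ with $\beta=\frto_{1,r}+\frto_{2,r-2}=\sigma_2\sigma_1(\alpha_2+2\frto_{3,r-2}+\alpha_{r-1}+\alpha_r)$; only $\sigma_1$ and $\sigma_2$ are ever applied, and both visibly preserve the $D'$ diagram, so no object-by-object bookkeeping is needed. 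Your decomposition into multiplicity-free roots (connected support plus Prop.~\ref{indec_sum} on parabolics) and coefficient-$2$ roots (a chain of reflections at $r-2,r-3,\dots,j$) is more explicit and each individual step checks out, but it pays for this with exactly the obstacle you flag: one must know row $r-2$ of the Cartan matrix at $\rho_{r-3}\cdots\rho_j(a)$ and the chain rows at all intermediate objects, which requires Thm.~\ref{th:diagrams} and Prop.~\ref{prop:diagrams} (via Lemma~\ref{lem5}) and hence only works for $r>8$; your main route therefore still needs the $r=8$ case read off from Section~\ref{sec:rank_le_eight}, which you mention only for the alternative contraction argument. Two small points: in the paper's composition convention the morphism carrying $\frto_{i,r}$ to $\gamma_{i,j}$ is $\sigma_j\sigma_{j+1}\cdots\sigma_{r-2}$ (with $\sigma_{r-2}$ applied first, as your prose correctly says), not $\sigma_{r-2}\cdots\sigma_j$; and your contraction-based alternative is closest in spirit to the paper's induction, though the paper descends via the parabolic on $\{2,\dots,r\}$ rather than by contracting a middle edge.
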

\begin{proof}
Choose the labels for the vertices of $\Gamma^a$ as in Fig.\ \ref{fig_DD}.
We proceed by induction on $r$. For $r=8$ the claim is true by Section \ref{sec:rank_le_eight},
so let $r>8$. Since the subdiagram to the labels $2,\ldots,r$ is of type $D'$, by induction we
have $M:=\Phi_{r,\{r-1\}}\cap \langle\alpha_2,\ldots,\alpha_r\rangle\subseteq \rer{a}_+$.
Let $\beta:=\frto_{1,r}+\frto_{2,r-2}$. One computes
\begin{equation}\label{M_s}
M\cup\Big\{\sigma_1(\gamma) \mid \gamma=\sum_{i=2}^r a_i\alpha_i\in M,\:\: a_2\ne 0 \Big\}
\cup \{\alpha_1,\beta\} = \Phi_{r,\{r-1\}}.
\end{equation}
But $\sigma_1$ maps to an object with the same Dynkin diagram, so
$M\subseteq \rer{\rho_1(a)}_+$.
Further $\beta=\sigma_2(\sigma_1(\alpha_2+2\frto_{3,r-2}+\alpha_{r-1}+\alpha_r))$.
Since $\sigma_2$ also maps to an object with the same Dynkin diagram,
with Equation \eqref{M_s} we obtain $\Phi_{r,\{r-1\}}\subseteq \rer{a}_+$.
\end{proof}

\begin{theor}\label{typ_DD}
Let $r\ge 8$, let $\cC$ be a \coscorf and $a$ an object of $\cC$.
Assume that $\Gamma^a$ is of type $D'_r$ or $D_r$ with labels for the vertices as
in Fig.\ \ref{fig_DD}. Then there exists a subset $Z\subseteq\{1,\ldots,r-1\}$
such that $\rer{a}_+=\Phi_{r,Z}$.
\end{theor}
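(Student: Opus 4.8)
The plan is to induct on $r$, using the complete list for $r=8$ from Section~\ref{sec:rank_le_eight} as base case, and to exploit the decomposition $\Phi_{r,Z}=\Phi_{r,\emptyset}\cup\{\beta_j\mid j\in Z\}$, where $\beta_j=\frto_{j,r}+\frto_{j,r-2}$ as in the proof of Prop.~\ref{DD_scheme2}; one checks directly that no $\beta_j$ lies in $\Phi_{r,\emptyset}$. Hence the theorem is equivalent to the two inclusions $\Phi_{r,\emptyset}\subseteq\rer{a}_+$ and $\rer{a}_+\subseteq\Phi_{r,\emptyset}\cup\{\beta_1,\dots,\beta_{r-1}\}$: granting both, the set $Z:=\{j\mid\beta_j\in\rer{a}_+\}$ satisfies $\rer{a}_+=\Phi_{r,Z}$, and the dichotomy $D_r$ versus $D'_r$ matches $r-1\notin Z$ versus $r-1\in Z$ by Prop.~\ref{DD_scheme1}. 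I label the vertices as in Fig.~\ref{fig_DD}, so $1,\dots,r-2$ is a chain, $r-1$ and $r$ are attached to $r-2$ (and to each other precisely in type $D'$), and all edges carry label one. For $\gamma\in\rer{a}_+$ I write $\gamma=\sum_i c_i\alpha_i$.

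The backbone of both inclusions is restriction to parabolic subgroupoids together with the reflection $\sigma_1$. The parabolic root subset $\langle\alpha_2,\dots,\alpha_r\rangle$ has Dynkin diagram $D_{r-1}$ or $D'_{r-1}$ (delete the chain end $1$), so by the induction hypothesis its positive part is a $\Phi_{r-1,Z'}$ in the variables $\alpha_2,\dots,\alpha_r$; reindexing shows that every root of $\rer{a}_+$ with $c_1=0$ already has the required shape (it lies in $\Phi_{r,\emptyset}\cup\{\beta_2,\dots,\beta_{r-1}\}$). For the lower bound I obtain each coefficient-$\le 1$ root $\sum_{i\in S}\alpha_i$ of $\Phi_{r,\emptyset}$ from Prop.~\ref{indec_sum} applied to the connected subdiagram on $S$, and I obtain the coefficient-$2$ roots $\frto_{1,r}+\frto_{j,r-2}$ ($1<j<r$) by applying $\sigma_1$ at $b:=\rho_1(a)$ to $\frto_{2,r}+\frto_{j,r-2}$, which is a root of $\Phi_{r-1,\emptyset}$ (reindexed) and hence present in the parabolic at $b$ for every value of the parameter; here $\Gamma^b$ is again $D_r$ or $D'_r$ by Prop.~\ref{prop:diagrams}. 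In type $D'$ one may instead quote Prop.~\ref{DD_rootsD} wholesale.

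For the upper bound it remains to control roots with $c_1\ge 1$. Restricting to the two type-$A_{r-1}$ parabolics $\langle\alpha_1,\dots,\alpha_{r-1}\rangle$ and $\langle\alpha_1,\dots,\alpha_{r-2},\alpha_r\rangle$, whose root systems are the standard ones consisting of the chains $\frto_{i,j}$, shows that a root outside $\Phi_{r,\emptyset}\cup\{\beta_j\}$ must also have $c_{r-1}\ge 1$ and $c_r\ge 1$; since the support of a root is connected (otherwise Prop.~\ref{indec_sum} would split it), such a root has full support $\{1,\dots,r\}$. I then analyse a full-support root $\gamma$ through its $\alpha_1$-string: as the diagram is simply laced the string is short, forcing $c_1\le 2$ with equality only for $\gamma=\beta_1$. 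When $c_1=c_2$ the reflected root $\sigma_1(\gamma)$ has vanishing $\alpha_1$-coefficient at $b$, hence is one of the known roots $\frto_{1,r}$, $\frto_{1,r}+\frto_{j,r-2}$ or the image of $\beta_1$, and transporting back by $\sigma_1$ identifies $\gamma$. This yields $\rer{a}_+\subseteq\Phi_{r,\emptyset}\cup\{\beta_1,\dots,\beta_{r-1}\}$ and finishes the proof.

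\textbf{Main obstacle.} The delicate point is exactly this full-support analysis. The reflection $\sigma_1$ only alters the $\alpha_1$-coefficient, replacing $c_1$ by $c_2-c_1$, so a single reflection need not decrease it; the bound $c_1\le 2$ and the reduction to a $c_1=0$ configuration must be extracted from the $\alpha_1$-root-string together with the induction hypothesis at the neighbouring object $b$, rather than from the Cartan matrix at $a$ alone. This is the higher-rank counterpart of the explicit identity \eqref{M_s} used in the proof of Prop.~\ref{DD_rootsD}, and it is where one genuinely leans on the fact that the relevant rank-$(r-1)$ parabolics are again of type $D$ or $D'$ and not sporadic.
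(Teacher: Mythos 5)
Your overall architecture (lower bound $\Phi_{r,\emptyset}\subseteq\rer{a}_+$ plus upper bound $\rer{a}_+\subseteq\Phi_{r,\emptyset}\cup\{\beta_1,\dots,\beta_{r-1}\}$, then read off $Z$) is reasonable, and the lower--bound half is essentially sound: Prop.~\ref{indec_sum} for the support roots, Prop.~\ref{DD_rootsD} in type $D'$, and transport by $\sigma_1$ for the coefficient-$2$ roots. But the upper bound, which is the real content of the theorem, rests on two steps that are not justified and do not follow from what you have available. First, you assert that the two parabolics with Dynkin diagram $A_{r-1}$ ``are the standard ones consisting of the chains $\frto_{i,j}$''. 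In a non-standard \coscorf a type-$A$ Dynkin diagram does not determine the root set: $\Psi_{r-1,Y}$ with $r-2\notin Y$ has diagram $A_{r-1}$ yet contains the non-chain roots $\frto_{i,r-1}+\frto_{j,r-2}$. Your induction hypothesis only covers objects of type $D_{r-1}$ or $D'_{r-1}$, so it says nothing about these parabolics, and Prop.~\ref{prop:diagrams} does not pin down their root sets either. Second, and more seriously, the claim that the ``$\alpha_1$-string is short, forcing $c_1\le 2$ with equality only for $\gamma=\beta_1$'' has no proof. The Lie-theoretic root-string lemma (string length bounded by Cartan integers, via the invariant bilinear form) is simply not available for Weyl groupoids: $\sigma_1^a$ maps $R^a$ to the \emph{different} set $R^{\rho_1(a)}$, and Lemma~\ref{co:cij} controls strings only through simple roots. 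You correctly identify this as the ``main obstacle'', but identifying an obstacle is not the same as overcoming it; as written, the bound $c_1\le 2$ and the reduction of the $c_1\ge 1$, $c_1\ne c_2$ case are missing, so the proof does not close.

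For contrast, the paper avoids both issues by a different induction, namely on the height of $\alpha\in\rer{a}_+$ within the fixed rank $r$. Either some $\sigma_i^a$ with $i\le r-2$ decreases the height --- and since these reflections preserve the types $D_r/D'_r$ and coincide with the model reflections of $\Phi_{r,\{1,\dots,r-1\}}$, the inductive hypothesis transports $\alpha$ into $\Phi_{r,\{1,\dots,r-1\}}$ --- or else the coefficient inequalities \eqref{aichain} hold, and then applying $\sigma_{r-1}\sigma_r^a$ (resp.\ $\sigma_r\sigma_{r-1}\sigma_r^a$) either decreases the height or forces $\alpha=0$. This replaces your root-string and parabolic-standardness claims by an explicit, elementary coefficient computation, which is exactly the ingredient your proposal still needs.
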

\begin{proof}
Notice first that by Prop.\ \ref{DD_rootsD}, $\Phi_{r,\{r-1\}}\subseteq \rer{a}_+$ if $\Gamma^a$
is of type $D'_r$.
Further, we know by Cor.\ \ref{phi_coscorf} that $\Phi_{r,Z}$ is a
root set of rank $r$ for all $Z\subseteq\{1,\ldots,r-1\}$.

Now assume that $\Gamma^a$ is of type $D_r$ or $D'_r$ and let $\alpha\in \rer{a}_+$.
Denote $Z_0=\{1,\ldots,r-1\}$.
We prove by induction on the height $\hg(\alpha)$ of $\alpha$ that $\alpha\in \Phi_{r,Z_0}$.
If $\hg(\alpha)=1$ then $\alpha$ is simple and we are done.
So assume $\hg(\alpha)>1$.
Applying $\sigma_i^a$ for $i=1,\ldots,r-2$ leads to an object
of Dynkin type $D_r$ or $D'_r$. If the height of $\sigma_i^a(\alpha)$ is smaller
than $\hg(\alpha)$ for such an $i$, then by induction $\sigma_i^a(\alpha)$
is in $\Phi_{r,Z_0}$.
Let $\sigma_i$ be the reflection corresponding to $\Phi_{r,Z_0}$ as in Def.\ \ref{cartan_maps}.
Then $\sigma_i=\sigma_i^a$ since the Dynkin diagram of $\Phi_{r,Z_0}$ is of type $D'$.
But then $\alpha\in \sigma_i(\Phi_{r,Z_0})=\{-\alpha_i\}\cup\Phi_{r,Z_0}\backslash\{\alpha_i\}$.

Assume that $\hg(\sigma_i^a(\alpha))\ge \hg(\alpha)$ for
$i=1,\ldots,r-2$. Then writing $\alpha=\sum_{i=1}^r a_i\alpha_i$ we obtain
\begin{eqnarray*}
a_2-a_1&\ge& a_1,\\
a_3-a_2+a_1&\ge& a_2,\\
\cdots\\
a_{r-2}-a_{r-3}+a_{r-4}&\ge& a_{r-3},\\
(a_r+a_{r-1})-a_{r-2}+a_{r-3}&\ge& a_{r-2}.
\end{eqnarray*}
This means that
\begin{equation}\label{aichain}
a_r+a_{r-1}-a_{r-2}\ge a_{r-2}-a_{r-3}\ge\ldots\ge a_2-a_1\ge a_1 \ge 0.
\end{equation}
Now if $\Gamma$ is of type $D'$ resp.\ $D$ then we compute
$\beta=\sigma_r\sigma_{r-1}\sigma_r^a(\alpha)$ resp.\ $\beta=\sigma_{r-1}\sigma_r^a(\alpha)$.
Notice that in both cases $\beta\in \rer{b}_+$ for some object $b$ of type $D$ or $D'$.
Again, if $\hg(\beta)<\hg(\alpha)$ then we are done by induction.
Assuming the converse, in both cases we obtain
\[ 0 \le \hg(\beta)-\hg(\alpha) = 2 a_{r-2} - 2 a_{r-1} - 2 a_r. \]
With (\ref{aichain}) this gives $a_r+a_{r-1}-a_{r-2}=0$, thus
$a_k-a_{k-1}=0$ for $k=2,\ldots,r-2$ and $a_1=0$. But then
$a_{r-2}=\ldots=a_1=0$, which implies $a_r+a_{r-1}=0$ and hence $\alpha=0$
contradicting $\alpha\in \rer{a}_+$.
\end{proof}

Collecting the last results we obtain the main theorem of this section:

\begin{theor}\label{thm:main_rank_gt8}
Let $\cC$ be a \coscorf of rank $r>8$ and let
\[ \rsC_+:=\{ \rer{a}_+ \mid a \in\cC\}. \]
Then there are two possibilities:\\
(1) The Cartan scheme $\cC$ is standard ($|\rsC_+|=1$) of type $A$, $B$, $C$, $D$.\\
(2) Up to equivalence the root sets of $\cC$ are given by
\[ \rsC_+ = \{ \Phi_{r,Z},\Psi_{r,Y},\Psi'_{r,Y} \mid Z,Y\subseteq\{1,\ldots,r-1\},\:\:|Z|=s,|Y|=s-1\} \]
for some $s\in \{1,\ldots,r-1\}$.

In particular, if $\cC$ is not standard then it has
\[ \binom{r-1}{s-1}+\binom{r}{s} \]
different root sets and $2^{r-1}(m+r)(r-1)!$ objects.
\end{theor}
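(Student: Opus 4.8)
The plan is to assemble the results of this section through a single case distinction, according to whether a Dynkin diagram of type $D'$ occurs among the objects of $\cC$. First I would record the clean dichotomy that this provides. By Thm.\ \ref{th:diagrams} every object has Dynkin diagram of type $A$, $B$, $C$, $D$ or $D'$. By Prop.\ \ref{prop:diagrams}(\ref{ct_1}),(\ref{ct_2}), if no object is of type $D'$ then either some object has type $B$ and $\cC$ is standard, or every object has type $A$, $C$ or $D$ and again $\cC$ is standard; conversely Prop.\ \ref{prop:diagrams}(\ref{ct_3}) shows that an object of type $D'$ is mapped by $\sigma_{r-1}$ and $\sigma_r$ to an object of type $A$, so a $D'$ object forces $\cC$ to be non-standard. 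Hence $\cC$ is standard (case (1)) precisely when no $D'$ object occurs, and non-standard (case (2)) precisely when one does. For case (1) there is then nothing more to prove: all Cartan matrices are equal and of type $A$, $B$, $C$ or $D$.

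For case (2) I would fix an object $a$ with $\Gamma^a$ of type $D'$ and apply Thm.\ \ref{typ_DD} to get $\rer{a}_+=\Phi_{r,Z}$ for some $Z\subseteq\{1,\ldots,r-1\}$; since the diagram is $D'$ and not $D$, Prop.\ \ref{DD_scheme1} gives $r-1\in Z$, and I set $s:=|Z|\in\{1,\ldots,r-1\}$. The heart of the argument is a closure statement: the family
\[ \mathcal{F}:=\{\Phi_{r,Z'},\Psi_{r,Y},\Psi'_{r,Y}\mid |Z'|=s,\ |Y|=s-1\} \]
is exactly the set of positive root sets of $\cC$. For completeness ($\rsC_+\subseteq\mathcal{F}$) I would use that, for a \coscorf, $\sigma_i^a$ coincides with the map $\sigma_i$ determined by $\rer{a}_+$ via Def.\ \ref{cartan_maps} (the Remark after it), so that by (R3) the transition $R^a\mapsto R^{\rho_i(a)}$ is governed entirely by Prop.\ \ref{DD_scheme2}; since every entry of that table sends a member of $\pm\mathcal{F}$ to a member of $\pm\mathcal{F}$ (the maps $\sigma_1,\ldots,\sigma_{r-2}$ act by transpositions preserving the cardinalities $s$ and $s-1$, while $\sigma_{r-1},\sigma_r$ only interchange the three families without changing these cardinalities), and $\cC$ is connected, every object's root set lies in $\mathcal{F}$.

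For realizability ($\mathcal{F}\subseteq\rsC_+$) and for the identification up to equivalence I would invoke Cor.\ \ref{phi_coscorf} and Prop.\ \ref{Z1_Z2}, which produce a connected simply connected Cartan scheme realizing every member of $\mathcal{F}$; since for a connected simply connected scheme each Cartan matrix $C^b=C_{\rer{b}_+}$, and hence the whole scheme (via the maps $\rho_i$ read off from $\sigma_i$), is recovered from the root sets by Def.\ \ref{cartan_maps}, agreement of root sets identifies this scheme with $\cC$ up to equivalence. Finally I would carry out the counting. There are $\binom{r-1}{s}$ root sets of the form $\Phi_{r,Z'}$ together with $\binom{r-1}{s-1}$ each of $\Psi_{r,Y}$ and $\Psi'_{r,Y}$, and Pascal's identity $\binom{r}{s}=\binom{r-1}{s}+\binom{r-1}{s-1}$ rewrites the total $\binom{r-1}{s}+2\binom{r-1}{s-1}$ as $\binom{r-1}{s-1}+\binom{r}{s}$; the number of objects equals $n=2^{r-1}(m+r)(r-1)!$ with $m=s$, exactly the count computed in the proof of the preceding theorem on $\Aut(\cC)$, using that for a connected simply connected scheme the number of objects is $|\Hom(\Wg(\cC),a)|$.

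I expect the only genuinely delicate step to be the closure/uniqueness argument of case (2): one must be sure that Prop.\ \ref{DD_scheme2} accounts for \emph{every} simple reflection at \emph{every} object of $\mathcal{F}$, so that no transition can escape $\mathcal{F}$, and that agreement of the root sets at one object really does determine a connected simply connected Cartan scheme up to equivalence. The remaining steps are bookkeeping built on the already-established classification of diagrams (Thm.\ \ref{th:diagrams}, Prop.\ \ref{prop:diagrams}) and of the $D$/$D'$ root systems (Thm.\ \ref{typ_DD}).
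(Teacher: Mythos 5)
Your proposal is correct and follows essentially the same route as the paper, which offers no written proof beyond ``collecting the last results'': the dichotomy from Thm.\ \ref{th:diagrams} and Prop.\ \ref{prop:diagrams}, the identification of the root sets via Thm.\ \ref{typ_DD} and the closure of the family $\{\Phi_{r,Z},\Psi_{r,Y},\Psi'_{r,Y}\}$ under the reflections of Prop.\ \ref{DD_scheme2}, realizability via Cor.\ \ref{phi_coscorf} and Prop.\ \ref{Z1_Z2}, and the object count from the theorem on $\Aut(\cC)$ (with $m=s$). Your explicit attention to the closure step and to the fact that a connected simply connected Cartan scheme is determined by the root set at one object is exactly the glue the paper leaves implicit.
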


\section{Finite \coscorfs of rank $<9$}
\label{sec:rank_le_eight}

In this section we explain the classification of \coscorfs of rank less or equal to $8$.
The proof is performed using computer calculations based on the knowledge
of the case of rank two and three (\cite{p-CH09a}, \cite{p-CH09c}).
Our algorithm described below is sufficiently powerful: The
implementation in {\sc C++} terminates within a few hours on a usual computer.

\begin{theor}\ 
\label{th:class_upto8}
\begin{enumerate}
\item Let $\cC$ be a connected Cartan scheme of rank $r$, with $3<r<9$ and $I=\{1,\ldots,r\}$.
Assume that $\rsC \re (\cC )$ is a finite irreducible root system of type $\cC$.
If $\cC$ is not equivalent to a Cartan scheme as in Cor.\ \ref{phi_coscorf},
then there exists an object $a\in A$ and a linear map $\tau \in \Aut (\ZZ ^I)$ such that
$\tau (\al _i)\in \{\al _1,\ldots,\al _r\}$ for all $i\in I$ and
$\tau (R^a_+)$ is one of the sets listed in
Appendix\ \ref{ap:rs}. Moreover, $\tau (R^a_+)$ with this property is
uniquely determined.
\item Let $R$ be one of the $24$ subsets of $\ZZ ^r$, $3<r<9$ appearing in
Appendix\ \ref{ap:rs}. There exists up to equivalence a unique
\coscorf $\cC$ such that $R\cup -R$ is the set of real roots $R^a$ in an object $a\in A$. 
Moreover $\rsC \re (\cC )$ is a finite irreducible root system of type $\cC$.
\item Let $\cC$ be a \coscorf of rank $r$ and $a\in A$. Then the Dynkin diagram
$\Gamma^a$ is one of the diagrams listed in Figure \ref{fig_dynkin}, p.\ \pageref{fig_dynkin}.
\end{enumerate}
\end{theor}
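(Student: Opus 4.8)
The plan is to reduce the classification of \coscorfs of rank $r$ to a finite enumeration of the possible positive root sets $R^a_+\subseteq\NN_0^I$ at a single object, and then to carry out that enumeration by induction on $r$, bootstrapping from the known cases $r=2,3$. The first point to establish is that a \coscorf is already determined up to equivalence by $R^a_+$ for one object $a$: by Def.\ \ref{cartan_maps} and the remark following it, $R^a_+$ fixes the Cartan matrix $C^a$ and hence all maps $\sigma_i^a$; axiom (R3) forces $R^{\rho_i(a)}$ to be $\sigma_i^a(R^a)$, so by connectivity the root set of every object is determined by closing $\{R^a_+\}$ under $R\mapsto \sigma_i(R)\cap\NN_0^I$, and simple connectivity identifies $\cC$ with the simply connected cover of the quotient obtained by gluing objects with equal root systems (Def.\ \ref{auto} and its remark). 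This yields the uniqueness in part (2) and reduces parts (1)--(2) to listing the sets $R^a_+$ up to the base-change action $\tau\in\Aut(\ZZ^I)$ with $\tau(\alpha_i)\in\{\alpha_1,\ldots,\alpha_r\}$; the uniqueness of the representative $\tau(R^a_+)$ in part (1) then follows because the $24$ appendix sets are arranged to represent pairwise inequivalent classes, so no \coscorf can meet two of them.

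For the enumeration I would induct on the rank, taking the rank-three classification of \cite{p-CH09c} as the base. The structural engine of the inductive step is Cor.\ \ref{cor:genposk}: whenever $r-1$ linearly independent elements of $R^a_+$ are carried to simple roots by a morphism, the corresponding rank-$(r-1)$ \rootsubset is a genuine root set of a \coscorf of rank $r-1$, and its Weyl groupoid is a parabolic subgroupoid. Hence every rank-$(r-1)$ ``facet'' of a candidate rank-$r$ system is one of the already-classified lower-rank systems, and a rank-$r$ candidate must be assembled from such compatible pieces: one fixes a Cartan matrix $C^a$ all of whose principal rank-$(r-1)$ parabolics are realized by known systems, and then attempts to grow the full root set.

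Concretely, the algorithm represents $\Wg(\cC)$ as a coloured graph whose vertices carry Cartan matrices and whose $i$-coloured edges implement $\sigma_i$; from $a$ one repeatedly forms the neighbours $\rho_i(a)$, each constrained by axiom (C2), by Lemma \ref{co:cij}, and by the demand that every rank-$(r-1)$ restriction be a classified system. One closes the graph under all reflections, gluing cycles via axiom (R4), and accepts a candidate exactly when the closure is a finite connected Cartan scheme satisfying (R1)--(R4) with at least one indecomposable Cartan matrix (which, by connectivity, gives irreducibility); a branch is discarded the instant a rank-two or rank-$(r-1)$ subsystem is forced to be affine or indefinite. Finiteness of each accepted system bounds the growth, so every branch terminates, and its very construction furnishes the \coscorf required for the existence claim in part (2). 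Part (3) is then immediate: the admissible Dynkin diagrams of Figure \ref{fig_dynkin} are read off from the Cartan matrices occurring at the objects of the finitely many enumerated systems, together with those of the series $\Phi_{r,Z}$ of Cor.\ \ref{phi_coscorf}.

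The hard part is twofold. First comes completeness: one must prove the inductive extension reaches every \coscorf, i.e.\ that no rank-$r$ system fails to arise by gluing its rank-$(r-1)$ parabolics. This rests on connectivity --- the object graph is generated from one object by the $\sigma_i$ --- together with Cor.\ \ref{cor:genposk} guaranteeing that all facets are already in hand. Second, and the genuine obstacle in practice, is computational feasibility: the naive search tree is astronomically large, so the argument must be backed by an implementation with aggressive pruning and with invariants that detect equivalent partial structures early, precisely the ``additional algorithmic ideas and further improvements'' promised in the introduction. Once the search is shown to terminate and its output verified against all axioms, the series systems of Cor.\ \ref{phi_coscorf} together with the $24$ sporadic representatives of the appendix form the complete list.
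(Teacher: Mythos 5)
Your overall strategy---reduce to enumerating $R^a_+$ at a single object, induct on the rank using the rank-three classification as base, prune with Cor.~\ref{cor:genposk}, and verify candidates by closing under reflections---is the same general shape as the paper's computational proof, and your reduction of uniqueness in part (2) to the determination of $\cC$ by one positive root set is exactly what the paper's final check (Algorithm ``RootSetsForAllObjects'') implements. However, there is a genuine gap in the engine of your enumeration. The paper's search is driven by Theorem~\ref{root_is_sum} (\cite[Thm.\ 2.10]{p-CH09c}): every non-simple positive root is a sum of two positive roots. Combined with a lexicographic order compatible with addition, this shows that $R^a_+$ can be built by repeatedly appending a root that is a sum of two roots already present and larger than all of them, and that \emph{every} coscorf root set arises along some branch of this tree. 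Your proposal never invokes this fact, and your substitute---``no rank-$r$ system fails to arise by gluing its rank-$(r-1)$ parabolics''---does not suffice: a root with full support lies in no coordinate parabolic, and Cor.~\ref{cor:genposk} applied to other hyperplanes requires already knowing which roots exist. Likewise, building the object graph from Cartan matrices alone cannot work, since (as the paper stresses) a single Cartan matrix of a sporadic scheme carries little information about the roots, and axiom (R4) needs the quantities $m^a_{i,j}$, which the Cartan matrix does not determine.

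Second, your termination argument is circular: ``finiteness of each accepted system bounds the growth'' presupposes the finiteness you are trying to certify. The paper's actual bound comes from requiring every irreducible \rootsubset of small rank encountered during the construction to embed into one of the finitely many \rootsubsets of the already-classified rank-$(r-1)$ schemes (the list $\Xi$), which bounds the number of roots in each plane and hence the whole search. Finally, the uniqueness of $\tau(R^a_+)$ in part (1) is not merely that the $24$ sets are pairwise inequivalent; it rests on the explicit canonical-form choice in Appendix~\ref{ap:rs} (the lexicographic minimum of $\{\pi(R^b_+)\mid b\in A,\ \pi\in S_r\}$), which makes the representative well defined within each equivalence class.
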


\subsection{The idea}
The classification of rank three has been achieved in \cite{p-CH09c}. Thus
here we assume that the rank $r$ is greater or equal to $4$.
Let $<$ be the lexicographic ordering on $\ZZ ^r$ such
that $\al_r<\al_{r-1}<\ldots<\al_1$.
Then $\al >0$ for any $\al \in \NN _0^r\setminus \{0\}$.

The following theorem (\cite[Thm.\ 2.10]{p-CH09c}) is crucial for the algorithm.
\begin{theor}\label{root_is_sum}
Let $\cC $ be a Cartan scheme. Assume that $\rsC \re (\cC )$ is a finite
root system of type $\cC $. Let $a\in A$ and $\al \in R^a_+$.
Then either $\al $ is simple, or it is the sum of two positive roots.
\end{theor}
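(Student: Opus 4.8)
The plan is to induct not on the height but on the quantity $\ell(\alpha)$, the minimal length of a morphism $w\in\Hom(a,b)$ (for some object $b$) such that $w(\alpha)$ is a simple root. This is well defined for every $\alpha\in\rer a$ by the very definition of the real roots: if $\alpha=\id^a\sigma_{i_1}\cdots\sigma_{i_k}(\alpha_j)$, then the inverse morphism carries $\alpha$ to $\alpha_j$. The advantage of $\ell$ over the height is that a length-decreasing reflection is available \emph{for free}: choosing a shortest such $w=\sigma_{i_1}\cdots\sigma_{i_m}^a$ with $m=\ell(\alpha)\ge 1$ and setting $i:=i_m$, the root $\beta:=\sigma_i^a(\alpha)\in R^{\rho_i(a)}$ satisfies $\ell(\beta)\le m-1$. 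Moreover $\beta$ is positive: since $\alpha$ is not simple, axiom (R2) forbids $\alpha$ from being supported on a single simple root, so $\beta=\sigma_i^a(\alpha)$ retains a strictly positive coefficient on some $\alpha_j$ with $j\ne i$, and by (R1) a root with a positive coefficient is positive. In particular $\alpha\ne\alpha_i$, so $\sigma_i:=\sigma_i^{\rho_i(a)}=(\sigma_i^a)^{-1}$ makes sense as the inverse move.

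I would then apply the induction hypothesis to $\beta$ and transport the decomposition back through $\sigma_i$. If $\beta=\alpha_{i'}$ is simple, then $i'\ne i$ (otherwise $\alpha=\sigma_i(\alpha_i)=-\alpha_i$ would be negative), so $\alpha=\alpha_{i'}+c'\alpha_i$ with $c'=-c^a_{i\,i'}\ge 1$ by axiom (C2); Lemma \ref{co:cij} then gives $\alpha_{i'}+(c'-1)\alpha_i\in R^a_+$, whence $\alpha=(\alpha_{i'}+(c'-1)\alpha_i)+\alpha_i$ is a sum of two positive roots. If instead $\beta=\beta_1+\beta_2$ with $\beta_1,\beta_2\in R^{\rho_i(a)}_+$ and \emph{neither} equal to $\alpha_i$, then each $\sigma_i(\beta_\nu)$ keeps a positive coefficient away from $\alpha_i$ and hence lies in $R^a_+$ by (R1), so $\alpha=\sigma_i(\beta_1)+\sigma_i(\beta_2)$ is again a sum of two positive roots.

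The only remaining, and genuinely delicate, situation is the \textbf{exceptional case} in which every decomposition of $\beta$ produced by the induction hypothesis is forced to involve $\alpha_i$, say $\beta=\alpha_i+\beta_2$ with $\beta_2\in R^{\rho_i(a)}_+$; here the naive pull-back gives only $\alpha=-\alpha_i+\sigma_i(\beta_2)$, which is not manifestly a sum of positive roots. I expect this to be the main obstacle. The plan is to resolve it by a rank-two analysis of the $\alpha_i$-root-string through $\beta_2$ at $\rho_i(a)$: applying $\sigma_i$ to that string shows that $\alpha$ lies inside its own $\alpha_i$-string in $R^a$ with $\alpha+\alpha_i=\sigma_i(\beta_2)\in R^a_+$, and whenever the string extends below $\alpha$ one gets $\alpha-\alpha_i\in R^a_+$ (its coefficients away from $\alpha_i$ agree with those of $\alpha$, so (R1) forces positivity), yielding $\alpha=(\alpha-\alpha_i)+\alpha_i$. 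The truly hard sub-case is when $\alpha$ is the \emph{bottom} of its $\alpha_i$-string, so $\alpha-\alpha_i\notin R^a$ and a different decomposition must be constructed. Here I would invoke the finer rank-two structure of Lemma \ref{co:cij} together with the general-position criterion of Corollary \ref{cor:genposk}, which characterizes exactly when a family of positive roots can be simultaneously straightened to simple roots; this is also the point at which the finiteness of $\rsC\re(\cC)$ is used essentially, since finiteness is what bounds the lengths of the root strings and guarantees that the required straightening morphisms exist.
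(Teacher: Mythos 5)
The paper itself does not prove this theorem; it quotes it from \cite[Thm.\ 2.10]{p-CH09c}, so your attempt can only be judged on its own terms. Up to your two ``easy'' cases it is correct: the invariant $\ell(\alpha)$ is well defined, reflecting by the last letter of a minimal word strictly decreases it, positivity of $\beta=\sigma_i^a(\alpha)$ follows from (R1)/(R2) exactly as you argue, Case 1 is a correct use of Lemma \ref{co:cij}, and Case 2 pulls a decomposition back through $\sigma_i$ without trouble.

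The exceptional case, however, is a genuine gap, and it is where the entire content of the theorem lives. First, the induction hypothesis only provides \emph{some} decomposition of $\beta$, so you cannot avoid being handed $\beta=\alpha_i+\beta_2$ and must deal with it. Second, in that case the only decomposition your argument can ever produce is $\alpha=(\alpha-\alpha_i)+\alpha_i$, i.e.\ one with a simple summand, and you have no tool to decide whether $\alpha-\alpha_i\in R^a$: Lemma \ref{co:cij} controls the $\alpha_i$-string through a \emph{simple} root $\alpha_j$ only, and Weyl groupoids have no invariant bilinear form and no unbroken-string lemma for strings through arbitrary roots. Worse, a decomposition with a simple summand need not exist at all. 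In the rank-three system Nr.\ 49 of Appendix \ref{ap:rs}, the root $\alpha=1^{11}2^{4}3^{2}$ satisfies $\alpha+\gamma\in R^a_+$ for the simple root $\gamma$ labelled $2$ (the word $1^{11}2^{5}3^{2}$ occurs in the list), yet $\alpha$ minus \emph{any} simple root fails to be a root ($1^{10}2^{4}3^{2}$, $1^{11}2^{3}3^{2}$ and $1^{11}2^{4}3$ are all absent); its decompositions, e.g.\ $1^{11}2^{4}3^{2}=1^{9}2^{3}3^{2}+1^{2}2$, necessarily have two non-simple summands. So the sentence ``a different decomposition must be constructed'' is precisely the missing mathematics, and the gesture towards Lemma \ref{co:cij}, Corollary \ref{cor:genposk} and finiteness does not supply it: Corollary \ref{cor:genposk} tells you when roots can be straightened to simple ones, not how to split a root. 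Closing the gap would require something substantially stronger, e.g.\ passing to the rank-two root subset $\langle\alpha,\alpha_i\rangle$, showing $\alpha$ is not a base element of it, and invoking the full rank-two classification to decompose $\alpha$ inside that subset --- which is a separate argument, not a refinement of the steps you have written down.
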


By Theorem \ref{root_is_sum} we may construct $R^a_+$ inductively
by starting with $R^a_+=\{\al _r,\al _{r-1},\ldots,\al _1\}$, and
appending in each step a sum of a pair of positive roots which is greater
than all roots in $R^a_+$ we already have.
During this process, we keep track of all \rootsubsets containing at least two
positive roots, and their positive roots.

This is an overview of the algorithm without any details:

\algo{EnumerateRootSetsOverview}{$R$}
{ Enumerate all root systems containing the roots $R$}
{a set of positive roots $R$.}
{all root sets containing $R$.}
{
\item If $R\cup -R$ is a root set, output $R\cup -R$ and continue.
\item For all subspaces $U$ generated by elements of $R$, check that $R\cap U$
could be extended to a root set.
\item Set $Y := \{ \alpha+\beta \mid \alpha,\beta\in R,\:\alpha\ne\beta \}\backslash R$.
\item For all $\alpha\in Y$ with $\alpha>\max R$, call\\
{\bf EnumerateRootSetsOverview}($R\cup\alpha$).
}

But this first approach is completely impracticable. We will need many
improvements to reach our goal.

\subsection{Some technical remarks}
In fact, depending on the rank it is not always useful to compute all
\rootsubsets of all ranks because for
instance a \coscorf of rank 7 can have up to $139251$ \rootsubsets of
rank $4$, in which case we spend more time organizing the \rootsubsets than
we spare using the restrictions they give. Thus in the following, $\rho<r$ will
be the rank up to which we compute all \rootsubsetsn.

Remark that to obtain a finite number of root systems as output,
we have to ensure that we compute only irreducible systems since there are
infinitely many inequivalent reducible root systems of rank two.
Hence starting with $\{\al _r,\al _{r-1},\ldots,\al _1\}$ will not work.
Instead, for each irreducible \coscorf we take a root system $R'$ of rank $r-1$
and start with the sets
\[ R_j:=\Big\{ \sum_{i=1}^{r-1} \beta_i\alpha_{i+1} \mid
\sum_{i=1}^{r-1} \beta_i\alpha_i \in R' \Big\} \cup
\{\alpha_1, \alpha_1+\alpha_j \}, \quad j=2,\ldots,r. \]

Before starting the algorithm, we collect all irreducible \rootsubsets of rank
up to $\rho$ of all irreducible \coscorfs of rank $r-1$ in a list $\Xi$
(including all \rootsubsets with permuted coordinates).
During the algorithm, if a fragment of a \rootsubset is found to be irreducible, then it is
part of an irreducible \rootsubset of rank $r-1$, and hence it lies in $\Xi$.
We also store the list $\Upsilon$ of all roots for all ranks $2,\ldots,\rho$ that
appear in $\Xi$. This way we never need to fill the memory with coordinates but
only with labels pointing to the root in $\Upsilon$.

\begin{defin}
Let $R_+$ be the set of positive roots of a fragment of a root subset (see \ref{subsect:rsf})
of rank three and $M$ the set of planes containing at least two elements of $R_+$.
We call the number
\[ \varepsilon_{R_+}:=3+\sum_{V\in M} \left(|V\cap R_+|-3\right) \]
the {\it Euler invariant} of $R_+$.
\end{defin}
If $R_+$ is a root subset of rank three, then $\varepsilon_{R_+}=0$ by
\cite[Thm.\ 3.17]{p-CH09c}.

\subsection{The rsf}\label{subsect:rsf}
In this section, we will call {\it root system fragment} (or {\it rsf}) the
following set of data associated to a set of positive roots $R$
in construction:
\begin{itemize}
\item An ordered set of positive vectors $R$.
\item For each rank $2,\ldots,\rho$, the sequence of fragments of \rootsubsetsn.
      Each such fragment consists of:
\begin{enumerate}
\item A subspace $U$ of $\QQ^r$, a matrix used for a membership test
      for this subspace, and a matrix needed to compute the coordinates
      of a given element with respect to the basis.
\item A hash value allowing us to perform a fast equality test for the subspaces.
\item Labels from $\Upsilon$ for the roots of $R$ in $U$ with respect
      to the basis of $U$.
\item Positions of the roots of $R$ in $U$ in the lexicographically ordered set $R$.
\item The adjacency matrix of the Dynkin diagram (so far) of $U$ and
      a flag whether it is connected.
\end{enumerate}
\item For the fragments of \rootsubsets of rank two: all entries of the Cartan matrices
      and flags indicating if the \rootsubset is ``finished''.
\item The Euler invariants of all fragments of rank three \rootsubsetsn.
\item Adjacency matrices of all fragments of parabolic subgroupoids.
\item A flag ``isvalid'' telling if all the above data are consistent.
\end{itemize}
These data are continuously updated during the algorithm.

\subsection{More remarks}
Although the algorithm looks similar to the algorithm enumerating the
root systems of rank three in \cite{p-CH09c}, this version is much more work to
implement for several reasons: The main reason is that
we need linear algebra for the subspaces generated by the roots
of a \rootsubsetn. This includes an implementation of small rational numbers,
Gau\ss{} algorithm, a fast membership test and
hash-values for the subspaces. Further we need a good memory management
for these subspaces to avoid duplicate versions of them.
But there are even more functions needed, for example a test to decide if the
Dynkin diagrams are connected.

Of course all these functions exist in computer algebra systems, but
unfortunately they do not reach the desired performance mainly for two reasons:
The first reason is that all these systems use arbitrary-precision
arithmetic and in our situation the coefficients of the roots never get bigger
than $14$. The second reason is that these systems spend much time
interpreting the code and dynamically determining the types of variables.
For instance the computation of all root systems of rank $7$ takes several weeks
using a computer algebra prototype and takes only $12$ minutes with the {\sc C++}
version. Since the {\sc C++} version needs approximatively $3$ hours for rank $8$,
we guess that a version on any computer algebra system would take little less than
a year. Besides, the computer algebra prototype uses a huge amount of memory.

\subsection{The algorithm}
This is the main recursion of the algorithm:

\algo{EnumerateRootSets}{$D$[,$\gamma$]}
{ Enumerate all root sets containing the roots of $D$ }
{an rsf $D$, possibly a required root $\gamma$.}
{all root sets containing the roots of $D$.}
{
\item If the Euler invariants of all fragments of rank three
\rootsubsets in $D$ are $0$, then check if $D$ yields a root system. If yes,
output $D$ and continue.
\item If no required root $\gamma$ is known, then:
For all fragments $U$ of irreducible \rootsubsets in $D$, search for possible completions
in $\Xi$. If $U$ may not be completed, then return $\emptyset$. Otherwise try
to determine a smallest root $\gamma$ which is missing and which will be included in any case,
call {\bf EnumerateRootSets}($D$,$\gamma$) if successful and return.
\item Denote $R$ the positive roots of $D$. Set\\
$Y \leftarrow \{ \alpha+\beta \mid \alpha,\beta\in R,\:\alpha\ne\beta \}\backslash R$.
\item\label{enum_step4} For all $\alpha\in Y$ with $[\gamma\ge] \alpha>\max R$, call
$\tilde D:=${\bf AppendRoot}($D$,$\alpha$);
if $\tilde D$ is valid, then call {\bf EnumerateRootSets}($D$[,$\gamma$]).
}

In practice we use a global list $\Omega$ in which we note which $R$ of an rsf has
already been treated. The first step in ``EnumerateRootSets'' is to check if $R$
is in $\Omega$. The following proposition gives more details:

\begin{propo}\label{omega}
If $R$ contains a root $\beta=\sum_i a_i\alpha_i$ with $a_1>1$, then we can
include the images of $R$ under $\sigma_2,\ldots,\sigma_r$ into $\Omega$,
and by the way we check if there is a contradiction (for example if these images
contain roots with positive and negative coefficients).
\end{propo}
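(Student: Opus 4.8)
The plan is to reduce everything to one elementary observation about the maps $\sigma_i$ of Def.\ \ref{cartan_maps}: since $\sigma_i(\alpha_j)=\alpha_j-c_{ij}\alpha_i$, applying $\sigma_i$ to a vector changes only its $\alpha_i$-coordinate and leaves every other coordinate fixed. In particular, for $i\in\{2,\dots,r\}$ the map $\sigma_i$ preserves the $\alpha_1$-coordinate of every vector. Hence, if $\beta=\sum_k a_k\alpha_k\in R$ has $a_1>1$, then $\sigma_i(\beta)$ again has $\alpha_1$-coordinate $a_1>1$ for each such $i$; since $\alpha_i$ has $\alpha_1$-coordinate $0$, this already forces $\sigma_i(\beta)\ne\pm\alpha_i$.

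First I would treat the situation once $R$ has been completed to the positive system $R^a_+$ of an object $a$, so that the Cartan matrix is final and $\sigma_i=\sigma_i^a$. Let $\gamma\in R^a_+$ with $\gamma\ne\alpha_i$. By (R2) the root $\gamma$ is not a multiple of $\alpha_i$, so it has a strictly positive coordinate in some direction $k\ne i$; this coordinate is unchanged by $\sigma_i$, and $\sigma_i(\gamma)\in R^{\rho_i(a)}$ by (R3), so (R1) forces $\sigma_i(\gamma)\in R^{\rho_i(a)}_+$. Together with $\sigma_i(\alpha_i)=-\alpha_i$ this shows
\[ \sigma_i(R^a_+)=\bigl(R^{\rho_i(a)}_+\setminus\{\alpha_i\}\bigr)\cup\{-\alpha_i\}, \]
so the neighbouring system is recovered as $R^{\rho_i(a)}_+=(\sigma_i(R)\setminus\{-\alpha_i\})\cup\{\alpha_i\}$, computable directly from $R$. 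Read contrapositively, the same computation is the contradiction test: if $\sigma_i(R)$ contains any vector other than $-\alpha_i$ that has both a positive and a negative entry, then it violates (R1) at $\rho_i(a)$, so $R$ cannot be the positive system of an object and the branch is discarded. During construction this test is applied only once the $i$-th row of the Cartan matrix is settled, which is precisely the information the rsf carries in its ``finished'' flags for the rank-two fragments $\langle\alpha_i,\alpha_j\rangle$, so that the $\sigma_i$ used is already the final one.

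Next I would justify entering the $r-1$ neighbours $R^{\rho_i(a)}_+$, $i=2,\dots,r$, into $\Omega$. Each is the positive system of an object of the same connected Cartan scheme $\cC$ as $R$. The algorithm outputs a single representative object per scheme, and the connected scheme $\cC$ is recovered from that representative by closing up under the $\sigma$'s; hence the neighbours $R^{\rho_i(a)}_+$, being further objects of the same $\cC$, are already accounted for, and recording them in $\Omega$ merely prevents $\cC$ from being rediscovered through one of them. The hypothesis $a_1>1$ is what makes this marking safe: by the first paragraph $\sigma_i(\beta)$ is a genuine positive root of $R^{\rho_i(a)}_+$ whose $\alpha_1$-coordinate is $a_1>1$, so every marked set lies strictly above the $\alpha_1$-level $1$ and therefore differs from all of the starting sets $R_j$, whose roots all have $\alpha_1$-coordinate at most $1$. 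This is also why $\sigma_1$ is excluded: it is the only reflection that can change the $\alpha_1$-level, so leaving it unmarked keeps every route back down towards the seeds open.

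The delicate point, and the one I would spend the most care on, is completeness: one must rule out that the combined pruning along $\sigma_2,\dots,\sigma_r$ over many objects could seal off an entire scheme so that none of its objects is ever reached unmarked. I would settle this using that $\cC$ is connected and simply connected, so that the morphism between any two objects is unique and every object is joined to a seed by a chain of reflections; since no $\sigma_1$-move and none of the seeds $R_j$ are ever marked, one checks that each scheme retains at least one object reachable by the recursion without encountering $\Omega$, which by the representative-output step above suffices to record the scheme.
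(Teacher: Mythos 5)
There is a genuine gap, in two places. First, the heart of the proposition is that the reflections $\sigma_2,\dots,\sigma_r$ computed from the \emph{partial} set $R$ already coincide with the final ones, and this must be deduced from the hypothesis $a_1>1$: since $\beta$ is the greatest root of $R$ and every root of the form $m\alpha_i+\alpha_j$ with $i>1$ has $\alpha_1$-coordinate $0$ or $1$, all such roots are lexicographically smaller than $\beta$ and hence already present in $R$, so the Cartan entries $c_{ij}$ with $i>1$ (and therefore $\sigma_2,\dots,\sigma_r$, but \emph{not} $\sigma_1$) are final. You instead work with a completed system $R^a_+$ and then appeal to the rsf's ``finished'' flags to say the test is only applied when the $i$-th row is settled; that is an extra assumption, not a consequence of $a_1>1$, and it is not how the proposition is stated or used. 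Your stated reason for excluding $\sigma_1$ (it changes the $\alpha_1$-level) is also not the operative one: $\sigma_1$ is excluded because the entries $c_{1j}$ may still change, as roots $m\alpha_1+\alpha_j$ with $m\ge 2$ can be appended later.

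Second, the completeness argument --- the part you yourself flag as delicate --- is not carried out. The set $\Omega$ prunes \emph{partial} root sets, i.e.\ nodes of the recursion tree, so the danger is that some complete root set is reachable only through a pruned prefix $\sigma_i(R)$; connectedness and simple connectedness of the groupoid concern objects of $\cC$ and morphisms between them, not reachability in the recursion tree, so ``each scheme retains at least one unmarked object'' does not follow from what you wrote. The paper's proof handles this concretely: any extension $R''=\sigma_i(R)\cup E$ maps under $\sigma_i$ to $R\cup\sigma_i(E)$; a new root $\sigma_i(\delta)$ with $\delta=k\alpha_1+\gamma$ either exceeds $\beta$ (and will be found by continuing from $R$) or satisfies $\sigma_i(\delta)>k\alpha_1$ with $k\ge 2$, so it is not a seed root and the set $R\cup\{\sigma_i(\delta)\}$ belongs to a branch already considered at an earlier stage. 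Without this case analysis (which uses precisely that $\sigma_i$, $i\ge 2$, preserves the $\alpha_1$-coordinate, the one correct observation you do make), the proposition is not proved.
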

\begin{proof}
Assume that $R$ contains a root $\beta=\sum_i a_i\alpha_i$ with $a_1>1$ and that $\beta$
is the greatest root in $R$.
Then all roots of the form $m\alpha_i+\alpha_j$ with $m\in\NN$ and $i,j>1$ are smaller than
$\beta$. Hence if $R$ is to become a root set $R_+^a$ some day (after including roots which
are greater than $\beta$), then its Cartan entries $c_{ij}$ with $i,j>1$ are already
known:
\[ c_{ij} = -\max\{m\in\NN \mid m\alpha_i+\alpha_j\in R_+^a \}. \]
The same holds for the entries $c_{i1}$, $i>1$. Therefore, the reflections
$\sigma_2,\ldots,\sigma_r$ are known.

Now let $i>1$ and consider $R':=\sigma_i(R)$. If we include $R'$ into $\Omega$, then
we have to ensure that all root sets constructed upon $R'$ have or will be handled
at some point. Thus assume $R''=R'\cup E$ where $E$ consists of roots greater than all
roots of $R'$. The roots of $\sigma_i(E)$ which are greater than $\beta$ do not pose
a problem, because they will be considered in future. So let
$\delta:=k\alpha_1+\gamma\in E$ with $\gamma\in\langle\alpha_2,\ldots,\alpha_r\rangle$
and $\sigma_i(\delta)< \beta$. But
\[ \sigma_i(\delta) = k\alpha_1-kc_{i1}\alpha_i+\sigma_i(\gamma) > k\alpha_1, \]
so $\sigma_i(\delta)$ is not a root from the starting set of roots.
If $\sigma_i(\delta)\notin R$, then $R\cup\{\sigma_i(\delta)\}$ is a set
of roots which has already been considered in an earlier stage of the algorithm.
\end{proof}
\begin{remar}
If we use Prop.\ \ref{omega}, then it is essential to append new roots
from $Y$ in lexicographical order in step \ref{enum_step4} of Algo.\ 4.2.
\end{remar}

The time consuming part of the algorithm is the function ``AppendRoot'':

\algo{AppendRoot}{$D$, $\alpha$}
{Append a root to an rsf}
{an rsf $D$, a root $\alpha$.}
{an rsf $\tilde D$ consisting of $D$ with $\alpha$ included.}
{
\item Copy the data of $D$ to a new rsf $\tilde D$.
\item The non-zero coordinates of $\alpha$ define a parabolic
subgroupoid $P$ of rank $s$ which will be irreducible in $\tilde D$.
If $s=2$, then update the adjacency matrix for all parabolic subgroupoids
containing $\alpha$.
Otherwise, if the Dynkin diagram of $P$ is not connected, then set
isvalid$:=${\bf false} and return.
\item For each positive root $\beta$ in $R$ compute the \rootsubset
$U:=\langle\alpha,\beta\rangle$. If $U$ is new, then include it to
the rsf $\tilde D$.
\item For each \rootsubset $U$ of rank $2,\ldots,\rho$ in $D$, test if $\alpha$ is
in $U$. If it is, then include it into $U$ in $\tilde D$, update the adjacency matrix
and test if the Dynkin diagram is connected;
compute its coordinates with respect to the basis: if they are not all
non-negative integers, then return $\tilde D$ with isvalid$:=${\bf false}.

If $U$ has rank $2$, update the Cartan entries: here we can test if the sequence
of Cartan entries is valid and return $\tilde D$ with isvalid$:=${\bf false} if it is not.

If $U$ has rank $3$, then update its Euler invariant.

If $\alpha\notin U$, then remember $U$.
\item For all \rootsubsets $U$ of rank $e$ we have remembered,
create a \rootsubset $U'$ of rank $e+1$ by including $\alpha$. Test if it is new by using
its hash value. If $e=2$, then initialize the Euler invariant of $U'$.
\item Return $\tilde D$ with isvalid$:=${\bf true}.
}

Finally, we still need a function to check which of the rsf is indeed a root set
(see \cite[Algo.\ 4.5]{p-CH09c}):

\algo{RootSetsForAllObjects}{$R$}
{Returns the roots for all objects if $R=R^a_+$ determines a Cartan
scheme $\cC $ such that $\rsC \re (\cC )$ is an irreducible root system}
{$R$ the set of positive roots at one object.}
{the set of roots at all objects, or $\emptyset$ if $R$ does
not yield a Cartan scheme as desired.}
{\label{A4}
\item $N \leftarrow [R]$, $M \leftarrow \emptyset$.
\item While $|N| > 0$, do steps \ref{begwhile} to \ref{endwhile}.
\item Let $F$ be the last element of $N$. Remove $F$ from $N$ and include it to $M$.\label{begwhile}
\item Compute the $r$ simple reflections given by $C_F$.
\item For each simple reflection $s$, do:\label{endwhile}
\begin{itemize}
\item Compute $G:=\{s(v)\mid v\in F\}$. If an element of $G$ has positive and negative coefficients,
then return $\emptyset$. Otherwise multiply the negative roots of $G$ by $-1$.
\item If $G\notin M$, then append $G$ to $N$.
\end{itemize}
\item Return $M$.}

\begin{appendix}
\section{Sporadic \coscorfs}

\subsection{Summary}
We will call {\it sporadic} the irreducible \coscorfs of rank $\ge 3$
not included in the series described in Section \ref{sec:rank_gt_eight}
because they do not seem to fit into a pattern.
Among them are those of type $F_4$, $E_6$, $E_7$, $E_8$.
In this section we summarize invariants of the sporadic \coscorfsn.

\begin{figure}[h]
\begin{tabular}{| l r r r r r r r | r |}
\hline
Rank & $2$ & $3$ & $4$ & $5$ & $6$ & $7$ & $8$ & $r > 8$ \\
Number & $\infty$ & $55$ & $18$ & $14$ & $13$ & $12$ & $12$ & $r+3$ \\
\hline
\end{tabular}
\caption{Number of irreducible \coscorfsn}
\label{number_table}
\end{figure}

Fig.\ \ref{number_table} shows an overview of the output of the above algorithms and
Section \ref{sec:rank_gt_eight}.
We thus have $50+11+6+4+2+1 = 74$ sporadic \coscorfs (in rank three only $5$ \coscorfs
are not sporadic because $A_3=D_3$).

On an {\it i7} with $2,8$ GHz, our C++ implementation of the algorithm
(including the final check whether the sets are root sets and the computation of ``canonical'' objects)
needs $6$ min., $2$ min., $16$ min., $12$ min., $170$ min.\ 
for the ranks $4$, $5$, $6$, $7$, $8$ respectively.
Remark that using Prop.\ \ref{omega} and the set $\Omega$ reduces the
runtime in all cases except for the case of rank $8$ where the runtime is increased
by $12$ minutes.

\subsection{Dynkin diagrams}

Figure \ref{fig_dynkin} displays all Dynkin diagrams of irreducible
\coscorfs of arbitrary rank. They are obtained from the data in Section \ref{ap:rs}
and \cite{p-CH09c} by Lemma \ref{co:cij}.
\begin{figure}
\setlength{\unitlength}{3300sp}
\begingroup\makeatletter\ifx\SetFigFont\undefined
\gdef\SetFigFont#1#2#3#4#5{\reset@font\fontsize{#1}{#2pt}
  \fontfamily{#3}\fontseries{#4}\fontshape{#5}\selectfont}
\fi\endgroup
\begin{picture}(8773,11505)(2300,-11176)
\thicklines
{\put(6751, 89){\line( 1, 0){600}}}
{\put(7351, 89){\vector( 1, 0){600}}}
{\put(6751,-361){\line( 1, 0){600}}}
{\put(7951,-361){\vector(-1, 0){600}}}
{\put(6751,-811){\line( 1, 0){600}}}
{\put(7951,-811){\vector(-1, 0){600}}}
{\put(6751,-1261){\line( 1, 0){600}}}
{\put(7351,-1261){\vector( 1, 0){600}}}
{\put(6751,-1711){\vector( 1, 0){600}}}
{\put(7351,-1711){\vector( 1, 0){600}}}
{\put(6751,-2161){\vector( 1, 0){600}}}
{\put(7951,-2161){\vector(-1, 0){600}}}
{\put(6751,-2611){\vector( 1, 0){600}}}
{\put(7951,-2611){\vector(-1, 0){600}}}
{\put(6751,-3061){\vector( 1, 0){600}}}
{\put(7951,-3061){\vector(-1, 0){600}}}
{\put(6751,-3511){\vector( 1, 0){600}}}
{\put(7951,-3511){\vector(-1, 0){600}}}
{\put(6751,-3961){\vector( 1, 0){600}}}
{\put(7951,-3961){\vector(-1, 0){600}}}
{\put(6751,-4411){\vector( 1, 0){600}}}
{\put(7951,-4486){\vector(-1, 0){600}}}
{\put(7351,-4336){\vector( 1, 0){600}}}
{\put(6751,-4936){\vector( 1, 0){600}}}
{\put(7951,-5011){\vector(-1, 0){600}}}
{\put(7351,-4861){\vector( 1, 0){600}}}
{\put(6751,-5461){\vector( 1, 0){600}}}
{\put(7951,-5536){\vector(-1, 0){600}}}
{\put(7351,-5386){\vector( 1, 0){600}}}
{\put(6751,-5986){\vector( 1, 0){600}}}
{\put(7951,-6061){\vector(-1, 0){600}}}
{\put(7351,-5911){\vector( 1, 0){600}}}
{\put(9826,-9136){\line( 0,-1){600}}}
{\put(9826,-9736){\line( 0,-1){600}}}
{\put(9826,-10336){\line( 2,-5){237.931}}}
{\put(9826,-10336){\line(-2,-5){237.931}}}
{\put(9001,-9136){\line( 0,-1){600}}}
{\put(9001,-9736){\line( 0,-1){600}}}
{\put(9226,-10936){\line(-1, 0){450}}}
{\put(9001,-10336){\line( 2,-5){237.931}}}
{\put(9001,-10336){\vector( -1, -3){207}}}
{\put(9601,-10936){\vector( 1, 0){450}}}
{\put(1801, 89){\line( 1, 0){600}}
\put(2401, 89){\line( 1, 0){600}}}
{\multiput(3001, 89)(200,0.00000){6}{\makebox(1.6667,11.6667){\SetFigFont{5}{6}{\rmdefault}{\mddefault}{\updefault}$\bullet$}}}
{\put(1801,-436){\line( 1, 0){600}}
\put(2401,-436){\line( 1, 0){600}}}
{\multiput(3001,-436)(200,0.00000){6}{\makebox(1.6667,11.6667){\SetFigFont{5}{6}{\rmdefault}{\mddefault}{\updefault}$\bullet$}}}
{\put(4201,-436){\line( 1, 0){600}}}
{\put(4801,-436){\vector( 1, 0){600}}}
{\put(1801,-961){\line( 1, 0){600}}
\put(2401,-961){\line( 1, 0){600}}}
{\multiput(3001,-961)(200,0.00000){6}{\makebox(1.6667,11.6667){\SetFigFont{5}{6}{\rmdefault}{\mddefault}{\updefault}$\bullet$}}}
{\put(4201,-961){\line( 1, 0){600}}}
{\put(5401,-961){\vector(-1, 0){600}}}
{\put(1801,-1561){\line( 1, 0){600}}
\put(2401,-1561){\line( 1, 0){600}}}
{\multiput(3001,-1561)(200,0.00000){6}{\makebox(1.6667,11.6667){\SetFigFont{5}{6}{\rmdefault}{\mddefault}{\updefault}$\bullet$}}}
{\put(4201,-1561){\line( 1, 0){600}}}
{\put(4801,-1561){\line( 5, 2){594.828}}}
{\put(4801,-1561){\line( 5,-2){594.828}}}
{\put(5401,-1336){\line( 0,-1){450}}}
{\put(1801,-2311){\line( 1, 0){600}}
\put(2401,-2311){\line( 1, 0){600}}}
{\put(4201,-2311){\line( 1, 0){600}}}
{\put(4801,-2311){\line( 5, 2){594.828}}}
{\put(4801,-2311){\line( 5,-2){594.828}}}
{\multiput(3001,-2311)(200,0.00000){6}{\makebox(1.6667,11.6667){\SetFigFont{5}{6}{\rmdefault}{\mddefault}{\updefault}$\bullet$}}}
{\put(1801,-3211){\line( 1, 0){2400}}}
{\put(3001,-2836){\line( 0,-1){375}}}
{\put(1801,-3961){\line( 1, 0){3000}}}
{\put(3001,-3586){\line( 0,-1){375}}}
{\put(1801,-4711){\line( 1, 0){3600}}}
{\put(3001,-4336){\line( 0,-1){375}}}
{\put(2701,-5086){\line( 4,-5){300}}}
{\put(1801,-5461){\line( 1, 0){1800}}}
{\put(2701,-5086){\line(-4,-5){300}}}
{\put(1801,-6211){\line( 1, 0){2400}}}
{\put(2701,-5836){\line( 4,-5){300}}}
{\put(2701,-5836){\line(-4,-5){300}}}
{\put(1801,-6961){\line( 1, 0){3000}}}
{\put(2701,-6586){\line( 4,-5){300}}}
{\put(2701,-6586){\line(-4,-5){300}}}

{\put(5776,-10036){\line( 1, 0){600}}}
{\put(6376,-10036){\line( 1, 0){600}}}
{\put(6976,-10036){\vector( 1, 0){600}}}
{\put(7576,-10036){\line( 1, 0){600}}}
{\put(5776,-10486){\line( 1, 0){600}}}
{\put(6376,-10486){\line( 1, 0){600}}}
{\put(7576,-10486){\line( 1, 0){600}}}
{\put(7576,-10486){\vector(-1, 0){600}}}
{\put(5776,-10936){\line( 1, 0){600}}}
{\put(6376,-10936){\line( 1, 0){600}}}
{\put(6976,-10936){\vector( 1, 0){600}}}
{\put(8176,-10936){\vector(-1, 0){600}}}
{\put(8776,-7486){\line( 1, 0){600}}}
{\put(9376,-7486){\line( 5, 2){594.828}}}
{\put(9976,-7261){\line( 0,-1){450}}}
{\put(10030,-7715){\vector(-3, 1){607.500}}}
{\put(8776,-8386){\line( 1, 0){600}}}
{\put(9376,-8386){\line( 5, 2){594.828}}}
{\put(9376,-8386){\line( 5,-2){594.828}}}
{\put(9976,-8611){\vector( 0, 1){450}}}
{\put(9526,239){\line(-1,-2){300}}}
{\put(9526,239){\line( 1,-2){300}}}
{\put(9226,-361){\vector( 1, 0){600}}}
{\put(9526,-586){\line(-1,-2){300}}}
{\put(9526,-586){\line( 1,-2){300}}}
{\put(9226,-1186){\vector( 1, 0){600}}}
{\put(9526,-1411){\line(-1,-2){300}}}
{\put(9526,-1411){\line( 1,-2){300}}}
{\put(9226,-2011){\vector( 1, 0){600}}}
{\put(9526,-3061){\line(-1,-2){300}}}
{\put(9226,-3661){\vector( 1, 0){600}}}
{\put(9526,-3886){\line(-1,-2){300}}}
{\put(9526,-3886){\line( 1,-2){300}}}
{\put(9226,-4561){\vector( 1, 0){600}}}
{\put(9826,-4411){\vector(-1, 0){600}}}
{\put(9526,-4936){\line(-1,-2){300}}}
{\put(9526,-4936){\line( 1,-2){300}}}
{\put(9226,-5611){\vector( 1, 0){600}}}
{\put(9826,-5461){\vector(-1, 0){600}}}
{\put(9526,-5986){\line(-1,-2){300}}}
{\put(9226,-6661){\vector( 1, 0){600}}}
{\put(9826,-6511){\vector(-1, 0){600}}}
{\put(9526,-2236){\line(-1,-2){300}}}
{\put(9226,-2836){\vector( 1, 0){600}}}
{\put(9526,-2236){\vector( 1,-2){300}}}
{\put(9826,-3661){\vector(-1, 2){300}}}
{\put(9526,-5986){\vector( 1,-2){300}}}
{\put(6151,-7111){\line( 1, 0){600}}}
{\put(7951,-7111){\vector(-1, 0){600}}}
{\put(6151,-6661){\line( 1, 0){600}}}
{\put(7351,-6661){\line( 1, 0){600}}}
{\put(6751,-6661){\vector( 1, 0){600}}}
{\put(6151,-9361){\line( 1, 0){600}}}
{\put(6151,-8911){\line( 1, 0){600}}}
{\put(7951,-8911){\vector(-1, 0){600}}}
{\put(6751,-8911){\vector( 1, 0){600}}}
{\put(6151,-8461){\line( 1, 0){600}}}
{\put(7351,-8461){\vector( 1, 0){600}}}
{\put(6751,-8461){\line( 1, 0){600}}}
{\put(6151,-7561){\line( 1, 0){600}}}
{\put(7951,-7561){\vector(-1, 0){600}}}
{\put(6751,-7561){\vector( 1, 0){600}}}
{\put(6151,-8011){\line( 1, 0){600}}}
{\put(7951,-8011){\vector(-1, 0){600}}}
{\put(6751,-8011){\line( 1, 0){600}}}
{\put(7351,-7111){\vector(-1, 0){600}}}
{\put(6751,-9361){\line( 1, 0){600}}}
{\put(7951,-9286){\vector(-1, 0){600}}}
{\put(7351,-9436){\vector( 1, 0){600}}}
\put(7576,164){\makebox(0,0)[lb]{\smash{{\SetFigFont{12}{14.4}{\rmdefault}{\mddefault}{\updefault}{3}}}}}
\put(6226, 14){\makebox(0,0)[lb]{\smash{{\SetFigFont{12}{14.4}{\rmdefault}{\mddefault}{\updefault}{$\Gamma_3^1$}}}}}
\put(7576,-286){\makebox(0,0)[lb]{\smash{{\SetFigFont{12}{14.4}{\rmdefault}{\mddefault}{\updefault}{3}}}}}
\put(6226,-436){\makebox(0,0)[lb]{\smash{{\SetFigFont{12}{14.4}{\rmdefault}{\mddefault}{\updefault}{$\Gamma_3^2$}}}}}
\put(7576,-736){\makebox(0,0)[lb]{\smash{{\SetFigFont{12}{14.4}{\rmdefault}{\mddefault}{\updefault}{4}}}}}
\put(6226,-886){\makebox(0,0)[lb]{\smash{{\SetFigFont{12}{14.4}{\rmdefault}{\mddefault}{\updefault}{$\Gamma_3^3$}}}}}
\put(7576,-1186){\makebox(0,0)[lb]{\smash{{\SetFigFont{12}{14.4}{\rmdefault}{\mddefault}{\updefault}{4}}}}}
\put(6226,-1336){\makebox(0,0)[lb]{\smash{{\SetFigFont{12}{14.4}{\rmdefault}{\mddefault}{\updefault}{$\Gamma_3^4$}}}}}
\put(6976,-1636){\makebox(0,0)[lb]{\smash{{\SetFigFont{12}{14.4}{\rmdefault}{\mddefault}{\updefault}{2}}}}}
\put(7576,-1636){\makebox(0,0)[lb]{\smash{{\SetFigFont{12}{14.4}{\rmdefault}{\mddefault}{\updefault}{2}}}}}
\put(6226,-1786){\makebox(0,0)[lb]{\smash{{\SetFigFont{12}{14.4}{\rmdefault}{\mddefault}{\updefault}{$\Gamma_3^5$}}}}}
\put(6976,-2086){\makebox(0,0)[lb]{\smash{{\SetFigFont{12}{14.4}{\rmdefault}{\mddefault}{\updefault}{2}}}}}
\put(7576,-2086){\makebox(0,0)[lb]{\smash{{\SetFigFont{12}{14.4}{\rmdefault}{\mddefault}{\updefault}{2}}}}}
\put(6226,-2236){\makebox(0,0)[lb]{\smash{{\SetFigFont{12}{14.4}{\rmdefault}{\mddefault}{\updefault}{$\Gamma_3^6$}}}}}
\put(6976,-2536){\makebox(0,0)[lb]{\smash{{\SetFigFont{12}{14.4}{\rmdefault}{\mddefault}{\updefault}{2}}}}}
\put(7576,-2536){\makebox(0,0)[lb]{\smash{{\SetFigFont{12}{14.4}{\rmdefault}{\mddefault}{\updefault}{3}}}}}
\put(6226,-2686){\makebox(0,0)[lb]{\smash{{\SetFigFont{12}{14.4}{\rmdefault}{\mddefault}{\updefault}{$\Gamma_3^7$}}}}}
\put(6976,-2986){\makebox(0,0)[lb]{\smash{{\SetFigFont{12}{14.4}{\rmdefault}{\mddefault}{\updefault}{2}}}}}
\put(7576,-2986){\makebox(0,0)[lb]{\smash{{\SetFigFont{12}{14.4}{\rmdefault}{\mddefault}{\updefault}{4}}}}}
\put(6226,-3136){\makebox(0,0)[lb]{\smash{{\SetFigFont{12}{14.4}{\rmdefault}{\mddefault}{\updefault}{$\Gamma_3^8$}}}}}
\put(6976,-3436){\makebox(0,0)[lb]{\smash{{\SetFigFont{12}{14.4}{\rmdefault}{\mddefault}{\updefault}{2}}}}}
\put(7576,-3436){\makebox(0,0)[lb]{\smash{{\SetFigFont{12}{14.4}{\rmdefault}{\mddefault}{\updefault}{6}}}}}
\put(6226,-3586){\makebox(0,0)[lb]{\smash{{\SetFigFont{12}{14.4}{\rmdefault}{\mddefault}{\updefault}{$\Gamma_3^9$}}}}}
\put(6976,-5386){\makebox(0,0)[lb]{\smash{{\SetFigFont{12}{14.4}{\rmdefault}{\mddefault}{\updefault}{2}}}}}
\put(7576,-5386){\makebox(0,0)[lb]{\smash{{\SetFigFont{12}{14.4}{\rmdefault}{\mddefault}{\updefault}{2}}}}}
\put(6976,-5911){\makebox(0,0)[lb]{\smash{{\SetFigFont{12}{14.4}{\rmdefault}{\mddefault}{\updefault}{2}}}}}
\put(7576,-6211){\makebox(0,0)[lb]{\smash{{\SetFigFont{12}{14.4}{\rmdefault}{\mddefault}{\updefault}{3}}}}}
\put(7576,-5911){\makebox(0,0)[lb]{\smash{{\SetFigFont{12}{14.4}{\rmdefault}{\mddefault}{\updefault}{2}}}}}
\put(7576,-4636){\makebox(0,0)[lb]{\smash{{\SetFigFont{12}{14.4}{\rmdefault}{\mddefault}{\updefault}{2}}}}}
\put(7576,-4336){\makebox(0,0)[lb]{\smash{{\SetFigFont{12}{14.4}{\rmdefault}{\mddefault}{\updefault}{2}}}}}
\put(7576,-4861){\makebox(0,0)[lb]{\smash{{\SetFigFont{12}{14.4}{\rmdefault}{\mddefault}{\updefault}{3}}}}}
\put(7576,-5161){\makebox(0,0)[lb]{\smash{{\SetFigFont{12}{14.4}{\rmdefault}{\mddefault}{\updefault}{2}}}}}
\put(7576,-5686){\makebox(0,0)[lb]{\smash{{\SetFigFont{12}{14.4}{\rmdefault}{\mddefault}{\updefault}{2}}}}}
\put(6976,-3886){\makebox(0,0)[lb]{\smash{{\SetFigFont{12}{14.4}{\rmdefault}{\mddefault}{\updefault}{3}}}}}
\put(7576,-3886){\makebox(0,0)[lb]{\smash{{\SetFigFont{12}{14.4}{\rmdefault}{\mddefault}{\updefault}{3}}}}}
\put(6226,-4036){\makebox(0,0)[lb]{\smash{{\SetFigFont{12}{14.4}{\rmdefault}{\mddefault}{\updefault}{$\Gamma_3^{10}$}}}}}
\put(6226,-4486){\makebox(0,0)[lb]{\smash{{\SetFigFont{12}{14.4}{\rmdefault}{\mddefault}{\updefault}{$\Gamma_3^{11}$}}}}}
\put(6226,-5011){\makebox(0,0)[lb]{\smash{{\SetFigFont{12}{14.4}{\rmdefault}{\mddefault}{\updefault}{$\Gamma_3^{12}$}}}}}
\put(6226,-5536){\makebox(0,0)[lb]{\smash{{\SetFigFont{12}{14.4}{\rmdefault}{\mddefault}{\updefault}{$\Gamma_3^{13}$}}}}}
\put(6226,-6061){\makebox(0,0)[lb]{\smash{{\SetFigFont{12}{14.4}{\rmdefault}{\mddefault}{\updefault}{$\Gamma_3^{14}$}}}}}
\put(8701,-10711){\makebox(0,0)[lb]{\smash{{\SetFigFont{12}{14.4}{\rmdefault}{\mddefault}{\updefault}{2}}}}}
\put(9751,-11161){\makebox(0,0)[lb]{\smash{{\SetFigFont{12}{14.4}{\rmdefault}{\mddefault}{\updefault}{2}}}}}
\put(9351,-9886){\makebox(0,0)[lb]{\smash{{\SetFigFont{12}{14.4}{\rmdefault}{\mddefault}{\updefault}{$\Gamma_5^5$}}}}}
\put(8526,-9886){\makebox(0,0)[lb]{\smash{{\SetFigFont{12}{14.4}{\rmdefault}{\mddefault}{\updefault}{$\Gamma_5^4$}}}}}
\put(1351, 14){\makebox(0,0)[lb]{\smash{{\SetFigFont{12}{14.4}{\rmdefault}{\mddefault}{\updefault}{$A$}}}}}
\put(5101,-361){\makebox(0,0)[lb]{\smash{{\SetFigFont{12}{14.4}{\rmdefault}{\mddefault}{\updefault}{2}}}}}
\put(1351,-511){\makebox(0,0)[lb]{\smash{{\SetFigFont{12}{14.4}{\rmdefault}{\mddefault}{\updefault}{$B$}}}}}
\put(5101,-886){\makebox(0,0)[lb]{\smash{{\SetFigFont{12}{14.4}{\rmdefault}{\mddefault}{\updefault}{2}}}}}
\put(1351,-1036){\makebox(0,0)[lb]{\smash{{\SetFigFont{12}{14.4}{\rmdefault}{\mddefault}{\updefault}{$C$}}}}}
\put(1351,-1636){\makebox(0,0)[lb]{\smash{{\SetFigFont{12}{14.4}{\rmdefault}{\mddefault}{\updefault}{$D'$}}}}}
\put(1351,-2386){\makebox(0,0)[lb]{\smash{{\SetFigFont{12}{14.4}{\rmdefault}{\mddefault}{\updefault}{$D$}}}}}
\put(1351,-3286){\makebox(0,0)[lb]{\smash{{\SetFigFont{12}{14.4}{\rmdefault}{\mddefault}{\updefault}{$\Gamma_6^3$}}}}}
\put(1351,-4036){\makebox(0,0)[lb]{\smash{{\SetFigFont{12}{14.4}{\rmdefault}{\mddefault}{\updefault}{$\Gamma_7^1$}}}}}
\put(1351,-4786){\makebox(0,0)[lb]{\smash{{\SetFigFont{12}{14.4}{\rmdefault}{\mddefault}{\updefault}{$\Gamma_8^1$}}}}}
\put(1351,-5536){\makebox(0,0)[lb]{\smash{{\SetFigFont{12}{14.4}{\rmdefault}{\mddefault}{\updefault}{$\Gamma_5^6$}}}}}
\put(1351,-6286){\makebox(0,0)[lb]{\smash{{\SetFigFont{12}{14.4}{\rmdefault}{\mddefault}{\updefault}{$\Gamma_6^4$}}}}}
\put(1351,-7036){\makebox(0,0)[lb]{\smash{{\SetFigFont{12}{14.4}{\rmdefault}{\mddefault}{\updefault}{$\Gamma_7^2$}}}}}
\put(7201,-9961){\makebox(0,0)[lb]{\smash{{\SetFigFont{12}{14.4}{\rmdefault}{\mddefault}{\updefault}{2}}}}}
\put(5251,-10111){\makebox(0,0)[lb]{\smash{{\SetFigFont{12}{14.4}{\rmdefault}{\mddefault}{\updefault}{$\Gamma_5^1$}}}}}
\put(7201,-10411){\makebox(0,0)[lb]{\smash{{\SetFigFont{12}{14.4}{\rmdefault}{\mddefault}{\updefault}{2}}}}}
\put(5251,-10561){\makebox(0,0)[lb]{\smash{{\SetFigFont{12}{14.4}{\rmdefault}{\mddefault}{\updefault}{$\Gamma_5^2$}}}}}
\put(7201,-10861){\makebox(0,0)[lb]{\smash{{\SetFigFont{12}{14.4}{\rmdefault}{\mddefault}{\updefault}{2}}}}}
\put(5251,-11011){\makebox(0,0)[lb]{\smash{{\SetFigFont{12}{14.4}{\rmdefault}{\mddefault}{\updefault}{$\Gamma_5^3$}}}}}
\put(7876,-10861){\makebox(0,0)[lb]{\smash{{\SetFigFont{12}{14.4}{\rmdefault}{\mddefault}{\updefault}{2}}}}}
\put(9601,-7861){\makebox(0,0)[lb]{\smash{{\SetFigFont{12}{14.4}{\rmdefault}{\mddefault}{\updefault}{2}}}}}
\put(10051,-8536){\makebox(0,0)[lb]{\smash{{\SetFigFont{12}{14.4}{\rmdefault}{\mddefault}{\updefault}{2}}}}}
\put(9451,-286){\makebox(0,0)[lb]{\smash{{\SetFigFont{12}{14.4}{\rmdefault}{\mddefault}{\updefault}{2}}}}}
\put(9451,-1111){\makebox(0,0)[lb]{\smash{{\SetFigFont{12}{14.4}{\rmdefault}{\mddefault}{\updefault}{3}}}}}
\put(9451,-4336){\makebox(0,0)[lb]{\smash{{\SetFigFont{12}{14.4}{\rmdefault}{\mddefault}{\updefault}{2}}}}}
\put(9451,-5386){\makebox(0,0)[lb]{\smash{{\SetFigFont{12}{14.4}{\rmdefault}{\mddefault}{\updefault}{2}}}}}
\put(9451,-5836){\makebox(0,0)[lb]{\smash{{\SetFigFont{12}{14.4}{\rmdefault}{\mddefault}{\updefault}{3}}}}}
\put(9451,-6436){\makebox(0,0)[lb]{\smash{{\SetFigFont{12}{14.4}{\rmdefault}{\mddefault}{\updefault}{2}}}}}
\put(9451,-1936){\makebox(0,0)[lb]{\smash{{\SetFigFont{12}{14.4}{\rmdefault}{\mddefault}{\updefault}{4}}}}}
\put(9451,-2761){\makebox(0,0)[lb]{\smash{{\SetFigFont{12}{14.4}{\rmdefault}{\mddefault}{\updefault}{2}}}}}
\put(9751,-2536){\makebox(0,0)[lb]{\smash{{\SetFigFont{12}{14.4}{\rmdefault}{\mddefault}{\updefault}{2}}}}}
\put(9451,-3586){\makebox(0,0)[lb]{\smash{{\SetFigFont{12}{14.4}{\rmdefault}{\mddefault}{\updefault}{2}}}}}
\put(9751,-3361){\makebox(0,0)[lb]{\smash{{\SetFigFont{12}{14.4}{\rmdefault}{\mddefault}{\updefault}{2}}}}}
\put(9451,-4786){\makebox(0,0)[lb]{\smash{{\SetFigFont{12}{14.4}{\rmdefault}{\mddefault}{\updefault}{2}}}}}
\put(9451,-6886){\makebox(0,0)[lb]{\smash{{\SetFigFont{12}{14.4}{\rmdefault}{\mddefault}{\updefault}{2}}}}}
\put(8626,-136){\makebox(0,0)[lb]{\smash{{\SetFigFont{12}{14.4}{\rmdefault}{\mddefault}{\updefault}{$\Gamma_3^{15}$}}}}}
\put(8626,-961){\makebox(0,0)[lb]{\smash{{\SetFigFont{12}{14.4}{\rmdefault}{\mddefault}{\updefault}{$\Gamma_3^{16}$}}}}}
\put(8626,-1786){\makebox(0,0)[lb]{\smash{{\SetFigFont{12}{14.4}{\rmdefault}{\mddefault}{\updefault}{$\Gamma_3^{17}$}}}}}
\put(8626,-2611){\makebox(0,0)[lb]{\smash{{\SetFigFont{12}{14.4}{\rmdefault}{\mddefault}{\updefault}{$\Gamma_3^{18}$}}}}}
\put(8626,-3436){\makebox(0,0)[lb]{\smash{{\SetFigFont{12}{14.4}{\rmdefault}{\mddefault}{\updefault}{$\Gamma_3^{19}$}}}}}
\put(8626,-4261){\makebox(0,0)[lb]{\smash{{\SetFigFont{12}{14.4}{\rmdefault}{\mddefault}{\updefault}{$\Gamma_3^{20}$}}}}}
\put(8626,-5311){\makebox(0,0)[lb]{\smash{{\SetFigFont{12}{14.4}{\rmdefault}{\mddefault}{\updefault}{$\Gamma_3^{21}$}}}}}
\put(8626,-6361){\makebox(0,0)[lb]{\smash{{\SetFigFont{12}{14.4}{\rmdefault}{\mddefault}{\updefault}{$\Gamma_3^{22}$}}}}}
\put(9730,-6261){\makebox(0,0)[lb]{\smash{{\SetFigFont{12}{14.4}{\rmdefault}{\mddefault}{\updefault}{2}}}}}
\put(8351,-7561){\makebox(0,0)[lb]{\smash{{\SetFigFont{12}{14.4}{\rmdefault}{\mddefault}{\updefault}{$\Gamma_4^8$}}}}}
\put(8351,-8461){\makebox(0,0)[lb]{\smash{{\SetFigFont{12}{14.4}{\rmdefault}{\mddefault}{\updefault}{$\Gamma_4^9$}}}}}
\put(6976,-7036){\makebox(0,0)[lb]{\smash{{\SetFigFont{12}{14.4}{\rmdefault}{\mddefault}{\updefault}{2}}}}}
\put(7576,-7036){\makebox(0,0)[lb]{\smash{{\SetFigFont{12}{14.4}{\rmdefault}{\mddefault}{\updefault}{2}}}}}
\put(5626,-7186){\makebox(0,0)[lb]{\smash{{\SetFigFont{12}{14.4}{\rmdefault}{\mddefault}{\updefault}{$\Gamma_4^2$}}}}}
\put(6976,-6586){\makebox(0,0)[lb]{\smash{{\SetFigFont{12}{14.4}{\rmdefault}{\mddefault}{\updefault}{2}}}}}
\put(5626,-6736){\makebox(0,0)[lb]{\smash{{\SetFigFont{12}{14.4}{\rmdefault}{\mddefault}{\updefault}{$\Gamma_4^1$}}}}}
\put(6976,-8836){\makebox(0,0)[lb]{\smash{{\SetFigFont{12}{14.4}{\rmdefault}{\mddefault}{\updefault}{2}}}}}
\put(7576,-8836){\makebox(0,0)[lb]{\smash{{\SetFigFont{12}{14.4}{\rmdefault}{\mddefault}{\updefault}{3}}}}}
\put(7576,-8386){\makebox(0,0)[lb]{\smash{{\SetFigFont{12}{14.4}{\rmdefault}{\mddefault}{\updefault}{3}}}}}
\put(6976,-7486){\makebox(0,0)[lb]{\smash{{\SetFigFont{12}{14.4}{\rmdefault}{\mddefault}{\updefault}{2}}}}}
\put(7576,-7486){\makebox(0,0)[lb]{\smash{{\SetFigFont{12}{14.4}{\rmdefault}{\mddefault}{\updefault}{2}}}}}
\put(7576,-7936){\makebox(0,0)[lb]{\smash{{\SetFigFont{12}{14.4}{\rmdefault}{\mddefault}{\updefault}{3}}}}}
\put(7576,-9211){\makebox(0,0)[lb]{\smash{{\SetFigFont{12}{14.4}{\rmdefault}{\mddefault}{\updefault}{2}}}}}
\put(7576,-9661){\makebox(0,0)[lb]{\smash{{\SetFigFont{12}{14.4}{\rmdefault}{\mddefault}{\updefault}{2}}}}}
\put(5626,-7636){\makebox(0,0)[lb]{\smash{{\SetFigFont{12}{14.4}{\rmdefault}{\mddefault}{\updefault}{$\Gamma_4^3$}}}}}
\put(5626,-8086){\makebox(0,0)[lb]{\smash{{\SetFigFont{12}{14.4}{\rmdefault}{\mddefault}{\updefault}{$\Gamma_4^4$}}}}}
\put(5626,-8536){\makebox(0,0)[lb]{\smash{{\SetFigFont{12}{14.4}{\rmdefault}{\mddefault}{\updefault}{$\Gamma_4^5$}}}}}
\put(5626,-8986){\makebox(0,0)[lb]{\smash{{\SetFigFont{12}{14.4}{\rmdefault}{\mddefault}{\updefault}{$\Gamma_4^6$}}}}}
\put(5626,-9436){\makebox(0,0)[lb]{\smash{{\SetFigFont{12}{14.4}{\rmdefault}{\mddefault}{\updefault}{$\Gamma_4^7$}}}}}
{\put(4201, 89){\line( 1, 0){600}}
\put(4801, 89){\line( 1, 0){600}}}
{\put(6751, 89){\circle*{100}}}
{\put(7351, 89){\circle*{100}}}
{\put(7951, 89){\circle*{100}}}
{\put(6751,-361){\circle*{100}}}
{\put(7351,-361){\circle*{100}}}
{\put(7951,-361){\circle*{100}}}
{\put(6751,-811){\circle*{100}}}
{\put(7351,-811){\circle*{100}}}
{\put(7951,-811){\circle*{100}}}
{\put(6751,-1261){\circle*{100}}}
{\put(7351,-1261){\circle*{100}}}
{\put(7951,-1261){\circle*{100}}}
{\put(6751,-1711){\circle*{100}}}
{\put(7351,-1711){\circle*{100}}}
{\put(7951,-1711){\circle*{100}}}
{\put(6751,-2161){\circle*{100}}}
{\put(7351,-2161){\circle*{100}}}
{\put(7951,-2161){\circle*{100}}}
{\put(6751,-2611){\circle*{100}}}
{\put(7351,-2611){\circle*{100}}}
{\put(7951,-2611){\circle*{100}}}
{\put(6751,-3061){\circle*{100}}}
{\put(7351,-3061){\circle*{100}}}
{\put(7951,-3061){\circle*{100}}}
{\put(6751,-3511){\circle*{100}}}
{\put(7351,-3511){\circle*{100}}}
{\put(7951,-3511){\circle*{100}}}
{\put(6751,-3961){\circle*{100}}}
{\put(7351,-3961){\circle*{100}}}
{\put(7951,-3961){\circle*{100}}}
{\put(6751,-4411){\circle*{100}}}
{\put(7351,-4411){\circle*{100}}}
{\put(7951,-4411){\circle*{100}}}
{\put(6751,-4936){\circle*{100}}}
{\put(7351,-4936){\circle*{100}}}
{\put(7951,-4936){\circle*{100}}}
{\put(6751,-5461){\circle*{100}}}
{\put(7351,-5461){\circle*{100}}}
{\put(7951,-5461){\circle*{100}}}
{\put(6751,-5986){\circle*{100}}}
{\put(7351,-5986){\circle*{100}}}
{\put(7951,-5986){\circle*{100}}}
{\put(9601,-10936){\circle*{100}}}
{\put(9826,-9736){\circle*{100}}}
{\put(9826,-9136){\circle*{100}}}
{\put(9826,-10336){\circle*{100}}}
{\put(10051,-10936){\circle*{100}}}
{\put(9001,-9736){\circle*{100}}}
{\put(9001,-9136){\circle*{100}}}
{\put(9001,-10336){\circle*{100}}}
{\put(9226,-10936){\circle*{100}}}
{\put(8776,-10936){\circle*{100}}}
{\put(4201, 89){\circle*{100}}}
{\put(4801, 89){\circle*{100}}}
{\put(5401, 89){\circle*{100}}}
{\put(3001, 89){\circle*{100}}}
{\put(2401, 89){\circle*{100}}}
{\put(1801, 89){\circle*{100}}}
{\put(4201,-436){\circle*{100}}}
{\put(4801,-436){\circle*{100}}}
{\put(3001,-436){\circle*{100}}}
{\put(2401,-436){\circle*{100}}}
{\put(1801,-436){\circle*{100}}}
{\put(5401,-436){\circle*{100}}}
{\put(4201,-961){\circle*{100}}}
{\put(4801,-961){\circle*{100}}}
{\put(5401,-961){\circle*{100}}}
{\put(3001,-961){\circle*{100}}}
{\put(2401,-961){\circle*{100}}}
{\put(1801,-961){\circle*{100}}}
{\put(1801,-1561){\circle*{100}}}
{\put(2401,-1561){\circle*{100}}}
{\put(3001,-1561){\circle*{100}}}
{\put(4201,-1561){\circle*{100}}}
{\put(4801,-1561){\circle*{100}}}
{\put(5401,-1336){\circle*{100}}}
{\put(5401,-1786){\circle*{100}}}
{\put(1801,-2311){\circle*{100}}}
{\put(2401,-2311){\circle*{100}}}
{\put(3001,-2311){\circle*{100}}}
{\put(4201,-2311){\circle*{100}}}
{\put(4801,-2311){\circle*{100}}}
{\put(5401,-2086){\circle*{100}}}
{\put(5401,-2536){\circle*{100}}}
{\put(1801,-3211){\circle*{100}}}
{\put(2401,-3211){\circle*{100}}}
{\put(3001,-3211){\circle*{100}}}
{\put(3601,-3211){\circle*{100}}}
{\put(4201,-3211){\circle*{100}}}
{\put(3001,-2836){\circle*{100}}}
{\put(2401,-3961){\circle*{100}}}
{\put(1801,-3961){\circle*{100}}}
{\put(3001,-3961){\circle*{100}}}
{\put(3601,-3961){\circle*{100}}}
{\put(4201,-3961){\circle*{100}}}
{\put(3001,-3586){\circle*{100}}}
{\put(4801,-3961){\circle*{100}}}
{\put(1801,-4711){\circle*{100}}}
{\put(2401,-4711){\circle*{100}}}
{\put(3001,-4711){\circle*{100}}}
{\put(3601,-4711){\circle*{100}}}
{\put(4201,-4711){\circle*{100}}}
{\put(4801,-4711){\circle*{100}}}
{\put(5401,-4711){\circle*{100}}}
{\put(3001,-4336){\circle*{100}}}
{\put(1801,-5461){\circle*{100}}}
{\put(2401,-5461){\circle*{100}}}
{\put(3001,-5461){\circle*{100}}}
{\put(3601,-5461){\circle*{100}}}
{\put(2701,-5086){\circle*{100}}}
{\put(1801,-6211){\circle*{100}}}
{\put(2401,-6211){\circle*{100}}}
{\put(3001,-6211){\circle*{100}}}
{\put(3601,-6211){\circle*{100}}}
{\put(4201,-6211){\circle*{100}}}
{\put(2701,-5836){\circle*{100}}}
{\put(1801,-6961){\circle*{100}}}
{\put(2401,-6961){\circle*{100}}}
{\put(3001,-6961){\circle*{100}}}
{\put(3601,-6961){\circle*{100}}}
{\put(4201,-6961){\circle*{100}}}
{\put(4801,-6961){\circle*{100}}}
{\put(2701,-6586){\circle*{100}}}
{\put(2401,-7536){\vector( 0,-1){375}}}
{\put(1801,-7911){\line( 1, 0){1200}}}
{\put(1801,-7911){\circle*{100}}}
{\put(3001,-7911){\circle*{100}}}
{\put(2401,-7536){\circle*{100}}}
{\put(2401,-7911){\circle*{100}}}
\put(1351,-7986){\makebox(0,0)[lb]{\smash{{\SetFigFont{12}{14.4}{\rmdefault}{\mddefault}{\updefault}{$\Gamma_4^{10}$}}}}}
\put(2176,-7836){\makebox(0,0)[lb]{\smash{{\SetFigFont{12}{14.4}{\rmdefault}{\mddefault}{\updefault}{2}}}}}
{\put(1801,-8761){\line( 1, 0){1800}}}
{\put(2401,-8386){\vector( 0,-1){375}}}
{\put(1801,-8761){\circle*{100}}}
{\put(3001,-8761){\circle*{100}}}
{\put(3601,-8761){\circle*{100}}}
{\put(2401,-8386){\circle*{100}}}
{\put(2401,-8761){\circle*{100}}}
\put(2176,-8686){\makebox(0,0)[lb]{\smash{{\SetFigFont{12}{14.4}{\rmdefault}{\mddefault}{\updefault}{2}}}}}
\put(1351,-8836){\makebox(0,0)[lb]{\smash{{\SetFigFont{12}{14.4}{\rmdefault}{\mddefault}{\updefault}{$\Gamma_5^7$}}}}}
{\put(1801,-9611){\line( 1, 0){2400}}}
{\put(2401,-9236){\vector( 0,-1){375}}}
\put(2176,-9536){\makebox(0,0)[lb]{\smash{{\SetFigFont{12}{14.4}{\rmdefault}{\mddefault}{\updefault}{2}}}}}
\put(1351,-9686){\makebox(0,0)[lb]{\smash{{\SetFigFont{12}{14.4}{\rmdefault}{\mddefault}{\updefault}{$\Gamma_6^5$}}}}}
{\put(1801,-9611){\circle*{100}}}
{\put(2401,-9611){\circle*{100}}}
{\put(3001,-9611){\circle*{100}}}
{\put(3601,-9611){\circle*{100}}}
{\put(2401,-9236){\circle*{100}}}
{\put(4201,-9611){\circle*{100}}}
\put(1351,-10361){\makebox(0,0)[lb]{\smash{{\SetFigFont{12}{14.4}{\rmdefault}{\mddefault}{\updefault}{$\Gamma_6^1$}}}}}
\put(3881,-10211){\makebox(0,0)[lb]{\smash{{\SetFigFont{12}{14.4}{\rmdefault}{\mddefault}{\updefault}{2}}}}}
{\put(1801,-10286){\line( 1, 0){600}}}
{\put(2401,-10286){\line( 1, 0){600}}}
{\put(3001,-10286){\line( 1, 0){600}}}
{\put(4201,-10286){\vector(-1, 0){600}}}
{\put(4201,-10286){\line( 1, 0){600}}}
{\put(1801,-10286){\circle*{100}}}
{\put(2401,-10286){\circle*{100}}}
{\put(3001,-10286){\circle*{100}}}
{\put(3601,-10286){\circle*{100}}}
{\put(4201,-10286){\circle*{100}}}
{\put(4801,-10286){\circle*{100}}}
\put(1351,-11011){\makebox(0,0)[lb]{\smash{{\SetFigFont{12}{14.4}{\rmdefault}{\mddefault}{\updefault}{$\Gamma_6^2$}}}}}
\put(3881,-10861){\makebox(0,0)[lb]{\smash{{\SetFigFont{12}{14.4}{\rmdefault}{\mddefault}{\updefault}{2}}}}}
{\put(1801,-10936){\line( 1, 0){600}}}
{\put(2401,-10936){\line( 1, 0){600}}}
{\put(3001,-10936){\line( 1, 0){600}}}
{\put(4201,-10936){\line( 1, 0){600}}}
{\put(3601,-10936){\vector( 1, 0){600}}}
{\put(1801,-10936){\circle*{100}}}
{\put(2401,-10936){\circle*{100}}}
{\put(3001,-10936){\circle*{100}}}
{\put(3601,-10936){\circle*{100}}}
{\put(4201,-10936){\circle*{100}}}
{\put(4801,-10936){\circle*{100}}}
{\put(5776,-10036){\circle*{100}}}
{\put(6376,-10036){\circle*{100}}}
{\put(6976,-10036){\circle*{100}}}
{\put(7576,-10036){\circle*{100}}}
{\put(8176,-10036){\circle*{100}}}
{\put(5776,-10486){\circle*{100}}}
{\put(6376,-10486){\circle*{100}}}
{\put(6976,-10486){\circle*{100}}}
{\put(7576,-10486){\circle*{100}}}
{\put(8176,-10486){\circle*{100}}}
{\put(5776,-10936){\circle*{100}}}
{\put(6376,-10936){\circle*{100}}}
{\put(6976,-10936){\circle*{100}}}
{\put(7576,-10936){\circle*{100}}}
{\put(8176,-10936){\circle*{100}}}
{\put(8776,-7486){\circle*{100}}}
{\put(9376,-7486){\circle*{100}}}
{\put(9976,-7261){\circle*{100}}}
{\put(9976,-7711){\circle*{100}}}
{\put(8776,-8386){\circle*{100}}}
{\put(9376,-8386){\circle*{100}}}
{\put(9976,-8611){\circle*{100}}}
{\put(9976,-8161){\circle*{100}}}
{\put(9526,239){\circle*{100}}}
{\put(9226,-361){\circle*{100}}}
{\put(9826,-361){\circle*{100}}}
{\put(9526,-586){\circle*{100}}}
{\put(9226,-1186){\circle*{100}}}
{\put(9826,-1186){\circle*{100}}}
{\put(9526,-1411){\circle*{100}}}
{\put(9226,-2011){\circle*{100}}}
{\put(9826,-2011){\circle*{100}}}
{\put(9226,-2836){\circle*{100}}}
{\put(9526,-3061){\circle*{100}}}
{\put(9226,-3661){\circle*{100}}}
{\put(9826,-3661){\circle*{100}}}
{\put(9526,-3886){\circle*{100}}}
{\put(9226,-4486){\circle*{100}}}
{\put(9826,-4486){\circle*{100}}}
{\put(9526,-4936){\circle*{100}}}
{\put(9226,-5536){\circle*{100}}}
{\put(9826,-5536){\circle*{100}}}
{\put(9526,-5986){\circle*{100}}}
{\put(9226,-6586){\circle*{100}}}
{\put(9826,-6586){\circle*{100}}}
{\put(9526,-2236){\circle*{100}}}
{\put(9826,-2836){\circle*{100}}}
{\put(7951,-6661){\circle*{100}}}
{\put(6151,-7111){\circle*{100}}}
{\put(6751,-7111){\circle*{100}}}
{\put(7951,-7111){\circle*{100}}}
{\put(6151,-6661){\circle*{100}}}
{\put(6751,-6661){\circle*{100}}}
{\put(7351,-6661){\circle*{100}}}
{\put(7951,-9361){\circle*{100}}}
{\put(6151,-9361){\circle*{100}}}
{\put(6751,-9361){\circle*{100}}}
{\put(6151,-8911){\circle*{100}}}
{\put(6751,-8911){\circle*{100}}}
{\put(7351,-8911){\circle*{100}}}
{\put(7951,-8911){\circle*{100}}}
{\put(6151,-8461){\circle*{100}}}
{\put(6751,-8461){\circle*{100}}}
{\put(7351,-8461){\circle*{100}}}
{\put(7951,-8461){\circle*{100}}}
{\put(6151,-7561){\circle*{100}}}
{\put(6751,-7561){\circle*{100}}}
{\put(7351,-7561){\circle*{100}}}
{\put(7951,-7561){\circle*{100}}}
{\put(6151,-8011){\circle*{100}}}
{\put(6751,-8011){\circle*{100}}}
{\put(7351,-8011){\circle*{100}}}
{\put(7951,-8011){\circle*{100}}}
{\put(7351,-7111){\circle*{100}}}
{\put(7351,-9361){\circle*{100}}}
\end{picture}
\caption{The Dynkin diagrams in irreducible \coscorfs of rank $\ge 3$}
\label{fig_dynkin}
\end{figure}

\subsection{Automorphism groups and planes}

\begin{remar}\label{rem_table}
We collect the following invariants in Table 1.

Let $\rsC_+=\{R_+^a \mid a \in A\}$ denote the set of root sets in the
objects of the Cartan scheme.
By identifying objects with the same roots one obtains a quotient Cartan scheme
of the simply connected Cartan scheme of the classification
(see \cite[Def.\ 3.1]{p-CH09b} for the definition of coverings).
This quotient has the minimal number of objects with respect to all quotients of the Cartan scheme.

In the fifth column we give the automorphism group of one (equivalently, any) object of this quotient
(this is $\Aut(\cC)$).

The last column contains a list of all Dynkin diagrams appearing in the Cartan scheme:
the number $i$ stands for the diagram $\Gamma_r^i$ of Fig.\ \ref{fig_dynkin}
if the root system is of rank $r$.

Remark that (except for the last column) the data for the Cartan schemes of
rank three is also in \cite[Table 1]{p-CH09c}. But notice that in \cite{p-CH09c}
the standard Cartan schemes and the ones from the infinite series were not omitted,
thus the scheme number $n$ here corresponds to the one labeled $n+5$ in \cite{p-CH09c}.
\end{remar}

\begin{center}
\begin{longtable}{|l l l l l l p{4cm}|}
\hline
r & Nr. & $|R_+^a|$ & $|\rsC_+|$ & $|A|$ & $\Aut(\cC)$ & Dynkin diagrams \\
\hline
\endhead
\hline
\endfoot
$3$ & $1$ & $10$ & $5$ & $60$ & $A_1 \times A_2 $ & $A$, $C$, $D'$, $6$ \\
$3$ & $2$ & $10$ & $10$ & $60$ & $A_2 $ & $A$, $B$, $C$, $15$ \\
$3$ & $3$ & $11$ & $9$ & $72$ & $A_1 \times A_1 \times A_1 $ & $A$, $B$, $C$, $6$, $15$ \\
$3$ & $4$ & $12$ & $21$ & $84$ & $A_1 \times A_1 $ & $A$, $B$, $C$, $2$, $6$, $15$, $20$ \\
$3$ & $5$ & $12$ & $14$ & $84$ & $A_2 $ & $A$, $C$, $6$, $15$ \\
$3$ & $6$ & $13$ & $4$ & $96$ & $G_2 \times A_1 $ & $A$, $B$, $2$, $15$ \\
$3$ & $7$ & $13$ & $12$ & $96$ & $A_1 \times A_1 \times A_1 $ & $A$, $B$, $C$, $D'$, $1$, $7$, $11$, $15$ \\
$3$ & $8$ & $13$ & $2$ & $96$ & $B_3 $ & $C$, $6$ \\
$3$ & $9$ & $13$ & $2$ & $96$ & $B_3 $ & $B$, $5$ \\
$3$ & $10$ & $14$ & $56$ & $112$ & $A_1 $ & $A$, $B$, $C$, $D'$, $2$, $6$, $7$, $11$, $15$, $16$, $20$ \\
$3$ & $11$ & $15$ & $16$ & $128$ & $A_1 \times A_1 \times A_1 $ & $A$, $C$, $6$, $7$, $15$, $20$ \\
$3$ & $12$ & $16$ & $36$ & $144$ & $A_1 \times A_1 $ & $A$, $B$, $C$, $2$, $6$, $7$, $15$, $18$, $20$ \\
$3$ & $13$ & $16$ & $24$ & $144$ & $A_2 $ & $A$, $B$, $C$, $1$, $5$, $11$, $15$, $19$ \\
$3$ & $14$ & $17$ & $10$ & $160$ & $A_1 \times B_2 $ & $A$, $B$, $C$, $D'$, $6$, $8$, $11$, $15$ \\
$3$ & $15$ & $17$ & $10$ & $160$ & $B_2 \times A_1 $ & $A$, $B$, $C$, $2$, $5$, $7$, $15$ \\
$3$ & $16$ & $17$ & $10$ & $160$ & $C_2 \times A_1 $ & $A$, $B$, $C$, $1$, $2$, $5$, $6$, $13$, $15$ \\
$3$ & $17$ & $18$ & $30$ & $180$ & $A_2 $ & $A$, $C$, $2$, $7$, $15$, $20$ \\
$3$ & $18$ & $18$ & $90$ & $180$ & $A_1 $ & $A$, $B$, $C$, $1$, $2$, $5$, $6$, $11$, $13$, $15$, $16$, $19$, $20$ \\
$3$ & $19$ & $19$ & $25$ & $200$ & $A_1 \times A_1 \times A_1 $ & $A$, $B$, $C$, $1$, $2$, $3$, $5$, $6$, $11$, $13$, $15$, $16$, $19$, $20$ \\
$3$ & $20$ & $19$ & $8$ & $192$ & $G_2 \times A_1 $ & $B$, $C$, $2$, $4$, $11$, $15$ \\
$3$ & $21$ & $19$ & $50$ & $200$ & $A_1 \times A_1$ & $A$, $B$, $C$, $1$, $2$, $5$, $6$, $11$, $13$, $15$, $16$, $19$, $21$ \\
$3$ & $22$ & $19$ & $25$ & $200$ & $A_1 \times A_1 \times A_1 $ & $A$, $B$, $C$, $1$, $2$, $5$, $6$, $11$, $13$, $15$, $16$, $19$ \\
$3$ & $23$ & $19$ & $8$ & $192$ & $G_2 \times A_1 $ & $B$, $C$, $1$, $6$, $7$, $11$ \\
$3$ & $24$ & $20$ & $27$ & $216$ & $C_2 $ & $A$, $B$, $C$, $D'$, $2$, $3$, $10$, $11$, $15$, $16$ \\
$3$ & $25$ & $20$ & $110$ & $220$ & $A_1 $ & $A$, $B$, $C$, $1$, $2$, $3$, $5$, $6$, $11$, $13$, $15$, $16$, $19$, $21$ \\
$3$ & $26$ & $20$ & $110$ & $220$ & $A_1 $ & $A$, $B$, $C$, $1$, $2$, $5$, $6$, $7$, $11$, $13$, $15$, $16$, $20$, $22$ \\
$3$ & $27$ & $21$ & $15$ & $240$ & $A_1 \times C_2 $ & $A$, $B$, $C$, $1$, $2$, $3$, $11$, $15$, $16$, $19$ \\
$3$ & $28$ & $21$ & $30$ & $240$ & $A_1 \times A_1 \times A_1 $ & $A$, $B$, $C$, $1$, $2$, $5$, $6$, $7$, $13$, $15$, $20$ \\
$3$ & $29$ & $21$ & $5$ & $240$ & $C_3 $ & $A$, $C$, $2$, $7$, $15$ \\
$3$ & $30$ & $22$ & $44$ & $264$ & $A_2 $ & $A$, $B$, $C$, $2$, $5$, $6$, $7$, $13$, $15$, $16$ \\
$3$ & $31$ & $25$ & $42$ & $336$ & $A_1 \times A_1 \times A_1 $ & $A$, $B$, $C$, $2$, $3$, $4$, $5$, $6$, $8$, $10$, $11$, $15$, $16$, $18$, $19$, $20$ \\
$3$ & $32$ & $25$ & $14$ & $336$ & $A_1 \times G_2 $ & $A$, $B$, $C$, $1$, $2$, $6$, $8$, $13$, $15$, $20$ \\
$3$ & $33$ & $25$ & $28$ & $336$ & $A_1 \times A_2$ & $A$, $B$, $C$, $D'$, $1$, $2$, $5$, $6$, $7$, $11$, $12$, $13$, $14$, $15$, $16$ \\
$3$ & $34$ & $25$ & $7$ & $336$ & $B_3 $ & $A$, $C$, $2$, $7$, $15$ \\
$3$ & $35$ & $26$ & $182$ & $364$ & $A_1 $ & $A$, $B$, $C$, $1$, $2$, $3$, $5$, $6$, $8$, $10$, $11$, $13$, $15$, $16$, $17$, $18$, $19$, $20$, $22$ \\
$3$ & $36$ & $26$ & $182$ & $364$ & $A_1 $ & $A$, $B$, $C$, $1$, $2$, $6$, $8$, $11$, $13$, $15$, $16$, $20$, $21$, $22$ \\
$3$ & $37$ & $27$ & $49$ & $392$ & $A_1 \times A_1 \times A_1 $ & $A$, $B$, $C$, $1$, $2$, $3$, $6$, $8$, $11$, $13$, $15$, $16$, $20$, $22$ \\
$3$ & $38$ & $27$ & $98$ & $392$ & $A_1 \times A_1$ & $A$, $B$, $C$, $1$, $2$, $5$, $6$, $8$, $11$, $13$, $15$, $16$, $20$, $21$, $22$ \\
$3$ & $39$ & $27$ & $98$ & $392$ & $A_1 \times A_1$ & $A$, $B$, $C$, $1$, $2$, $6$, $7$, $8$, $11$, $13$, $15$, $16$, $20$, $21$, $22$ \\
$3$ & $40$ & $28$ & $420$ & $420$ & $1$ & $A$, $B$, $C$, $1$, $2$, $3$, $5$, $6$, $7$, $8$, $11$, $13$, $15$, $16$, $20$, $21$, $22$ \\
$3$ & $41$ & $28$ & $210$ & $420$ & $A_1 $ & $A$, $B$, $C$, $1$, $2$, $5$, $6$, $7$, $8$, $11$, $13$, $15$, $16$, $21$, $22$ \\
$3$ & $42$ & $28$ & $70$ & $420$ & $A_2 $ & $A$, $B$, $C$, $2$, $5$, $6$, $8$, $13$, $15$, $16$, $21$ \\
$3$ & $43$ & $29$ & $56$ & $448$ & $A_1 \times A_1 \times A_1 $ & $A$, $B$, $C$, $2$, $3$, $5$, $6$, $7$, $8$, $11$, $13$, $15$, $16$, $20$, $22$ \\
$3$ & $44$ & $29$ & $112$ & $448$ & $A_1 \times A_1$ & $A$, $B$, $C$, $1$, $2$, $3$, $5$, $6$, $7$, $8$, $11$, $13$, $15$, $16$, $21$, $22$ \\
$3$ & $45$ & $29$ & $112$ & $448$ & $A_1 \times A_1$ & $A$, $B$, $C$, $2$, $3$, $5$, $6$, $7$, $8$, $11$, $13$, $15$, $16$, $21$, $22$ \\
$3$ & $46$ & $30$ & $238$ & $476$ & $A_1 $ & $A$, $B$, $C$, $2$, $3$, $5$, $6$, $7$, $8$, $11$, $13$, $15$, $16$, $21$, $22$ \\
$3$ & $47$ & $31$ & $21$ & $504$ & $A_1 \times G_2 $ & $A$, $B$, $C$, $D'$, $1$, $2$, $6$, $9$, $11$, $13$, $15$ \\
$3$ & $48$ & $31$ & $21$ & $504$ & $A_1 \times G_2 $ & $A$, $B$, $C$, $2$, $3$, $5$, $6$, $7$, $8$, $13$, $15$, $16$ \\
$3$ & $49$ & $34$ & $102$ & $612$ & $A_2 $ & $A$, $B$, $C$, $2$, $3$, $6$, $7$, $8$, $11$, $15$, $16$, $20$, $22$ \\
$3$ & $50$ & $37$ & $15$ & $720$ & $B_3 $ & $A$, $C$, $2$, $7$, $8$, $15$, $20$ \\
\hline
$4$ & $1$ & $15$ & $10$ & $360$ & $A_2 \times A_2 $ & $A$, $D'$ \\
$4$ & $2$ & $17$ & $10$ & $480$ & $B_3 $ & $A$, $D$, $D'$, $1$, $10$ \\
$4$ & $3$ & $18$ & $6$ & $576$ & $B_3 \times A_1 $ & $A$, $C$, $D'$, $1$ \\
$4$ & $4$ & $21$ & $36$ & $864$ & $A_3 $ & $A$, $B$, $C$, $D$, $D'$, $1$, $8$, $9$, $10$ \\
$4$ & $5$ & $22$ & $10$ & $960$ & $C_3 \times A_1 $ & $A$, $C$, $D$, $D'$, $1$, $3$, $10$ \\
$4$ & $6$ & $24$ & $1$ & $1152$ & $F_4 $ & $1$ \\
$4$ & $7$ & $25$ & $12$ & $1440$ & $A_4 $ & $A$, $B$, $C$, $D'$, $9$ \\
$4$ & $8$ & $28$ & $20$ & $1920$ & $B_3 \times A_1 $ & $A$, $B$, $C$, $D$, $D'$, $1$, $2$, $3$, $8$, $9$, $10$ \\
$4$ & $9$ & $30$ & $16$ & $2304$ & $G_2 \times G_2 $ & $A$, $D'$, $1$, $4$, $8$ \\
$4$ & $10$ & $32$ & $28$ & $2688$ & $B_3 \times A_1 $ & $A$, $B$, $C$, $D$, $D'$, $1$, $5$, $6$, $7$, $8$, $9$, $10$ \\
$4$ & $11$ & $32$ & $7$ & $2688$ & $B_4 $ & $A$, $C$, $D'$, $1$, $3$ \\
\hline
$5$ & $1$ & $25$ & $6$ & $4320$ & $A_5 $ & $A$, $D$, $6$ \\
$5$ & $2$ & $30$ & $12$ & $8640$ & $A_5 $ & $A$, $D$, $D'$, $6$ \\
$5$ & $3$ & $33$ & $15$ & $11520$ & $B_4 \times A_1 $ & $A$, $D$, $D'$, $1$, $2$, $6$, $7$ \\
$5$ & $4$ & $41$ & $7$ & $26880$ & $B_5 $ & $A$, $C$, $D$, $D'$, $1$, $6$ \\
$5$ & $5$ & $46$ & $56$ & $40320$ & $A_5 $ & $A$, $B$, $C$, $D$, $D'$, $2$, $4$, $5$, $6$, $7$ \\
$5$ & $6$ & $49$ & $21$ & $48384$ & $F_4 \times A_1 $ & $A$, $C$, $D$, $D'$, $1$, $2$, $3$, $6$, $7$ \\
\hline
$6$ & $1$ & $36$ & $1$ & $51840$ & $E_6 $ & $3$ \\
$6$ & $2$ & $46$ & $7$ & $161280$ & $D_6 $ & $A$, $D$, $3$, $4$ \\
$6$ & $3$ & $63$ & $14$ & $725760$ & $E_6 $ & $A$, $D'$, $3$, $4$ \\
$6$ & $4$ & $68$ & $21$ & $967680$ & $B_6 $ & $A$, $D$, $D'$, $1$, $2$, $3$, $4$, $5$ \\
\hline
$7$ & $1$ & $63$ & $1$ & $2903040$ & $E_7 $ & $1$ \\
$7$ & $2$ & $91$ & $8$ & $23224320$ & $E_7 $ & $A$, $D$, $1$, $2$ \\
\hline
$8$ & $1$ & $120$ & $1$ & $696729600$ & $E_8 $ & $ 1 $ \\
\end{longtable}
{\small Table 1: Invariants of sporadic \coscorfsn, see Rem.\ \ref{rem_table}}
\end{center}

\section{Irreducible root systems}
\label{ap:rs}

We give the roots in a multiplicative notation\footnote{We use the lexicographical ordering induced by
$\alpha_1>\alpha_2>\ldots>\alpha_r$. This is convenient because it
is the usual ordering in computer algebra systems. The index ``$r+1-i$''
ensures that the lists of roots start with $1,2,3,\ldots$}
to save space: For instance the word $\prod_{i=1}^r i^{x_i}$ corresponds to
$\sum_{i=1}^r x_i\alpha_{r+1-i}$.

Notice that we have chosen a ``canonical'' object for each
groupoid. Write $\pi(R^a_+)$ for the set $R^a_+$ where the coordinates
are permuted via $\pi\in S_r$. Then the set listed below is the minimum
of $\{\pi(R^a_+)\mid a\in A,\:\: \pi\in S_r\}$ with respect
to the lexicographical ordering on the sorted sequences of roots.

\subsection{Rank 3}
\hspace{12pt}\\

\baselineskip=10pt
\begin{tiny}
\noindent
Nr. $1$ with $10$ positive roots:
$1$, $2$, $3$, $12$, $13$, $1^{2}2$, $1^{2}3$, $123$, $1^{2}23$, $1^{3}23$\\
Nr. $2$ with $10$ positive roots:
$1$, $2$, $3$, $12$, $13$, $23$, $1^{2}2$, $123$, $1^{2}23$, $1^{2}2^{2}3$\\
Nr. $3$ with $11$ positive roots:
$1$, $2$, $3$, $12$, $13$, $1^{2}2$, $1^{2}3$, $123$, $1^{2}23$, $1^{3}23$, $1^{3}2^{2}3$\\
Nr. $4$ with $12$ positive roots:
$1$, $2$, $3$, $12$, $13$, $1^{2}2$, $123$, $1^{3}2$, $1^{2}23$, $1^{3}23$, $1^{3}2^{2}3$, $1^{4}2^{2}3$\\
Nr. $5$ with $12$ positive roots:
$1$, $2$, $3$, $12$, $13$, $1^{2}2$, $1^{2}3$, $123$, $1^{2}23$, $1^{3}23$, $1^{2}2^{2}3$, $1^{3}2^{2}3$\\
Nr. $6$ with $13$ positive roots:
$1$, $2$, $3$, $12$, $13$, $1^{2}2$, $123$, $1^{3}2$, $1^{2}23$, $1^{3}2^{2}$, $1^{3}23$, $1^{3}2^{2}3$, $1^{4}2^{2}3$\\
Nr. $7$ with $13$ positive roots:
$1$, $2$, $3$, $12$, $13$, $1^{2}2$, $1^{2}3$, $123$, $1^{3}2$, $1^{2}23$, $1^{3}23$, $1^{4}23$, $1^{4}2^{2}3$\\
Nr. $8$ with $13$ positive roots:
$1$, $2$, $3$, $12$, $13$, $1^{2}2$, $1^{2}3$, $123$, $1^{2}23$, $1^{3}23$, $1^{2}2^{2}3$, $1^{3}2^{2}3$, $1^{4}2^{2}3$\\
Nr. $9$ with $13$ positive roots:
$1$, $2$, $3$, $12$, $13$, $1^{2}2$, $123$, $13^{2}$, $1^{2}23$, $123^{2}$, $1^{2}23^{2}$, $1^{3}23^{2}$, $1^{3}2^{2}3^{2}$\\
Nr. $10$ with $14$ positive roots:
$1$, $2$, $3$, $12$, $13$, $1^{2}2$, $1^{2}3$, $123$, $1^{3}2$, $1^{2}23$, $1^{3}23$, $1^{4}23$, $1^{3}2^{2}3$, $1^{4}2^{2}3$\\
Nr. $11$ with $15$ positive roots:\\
$1$, $2$, $3$, $12$, $13$, $1^{2}2$, $1^{2}3$, $123$, $1^{3}2$, $1^{2}23$, $1^{3}23$, $1^{4}23$, $1^{3}2^{2}3$, $1^{4}2^{2}3$, $1^{5}2^{2}3$\\
Nr. $12$ with $16$ positive roots:\\
$1$, $2$, $3$, $12$, $13$, $1^{2}2$, $1^{2}3$, $123$, $1^{3}2$, $1^{2}23$, $1^{3}2^{2}$, $1^{3}23$, $1^{4}23$, $1^{3}2^{2}3$, $1^{4}2^{2}3$, $1^{5}2^{2}3$\\
Nr. $13$ with $16$ positive roots:\\
$1$, $2$, $3$, $12$, $23$, $1^{2}2$, $123$, $1^{3}2$, $1^{2}23$, $12^{2}3$, $1^{3}23$, $1^{2}2^{2}3$, $1^{3}2^{2}3$, $1^{4}2^{2}3$, $1^{4}2^{3}3$, $1^{4}2^{3}3^{2}$\\
Nr. $14$ with $17$ positive roots:\\
$1$, $2$, $3$, $12$, $13$, $1^{2}2$, $1^{2}3$, $123$, $1^{3}2$, $1^{2}23$, $1^{4}2$, $1^{3}23$, $1^{4}23$, $1^{5}23$, $1^{4}2^{2}3$, $1^{5}2^{2}3$, $1^{6}2^{2}3$\\
Nr. $15$ with $17$ positive roots:\\
$1$, $2$, $3$, $12$, $13$, $1^{2}2$, $1^{2}3$, $123$, $1^{3}2$, $1^{2}23$, $1^{3}2^{2}$, $1^{3}23$, $1^{4}23$, $1^{3}2^{2}3$, $1^{4}2^{2}3$, $1^{5}2^{2}3$, $1^{5}2^{2}3^{2}$\\
Nr. $16$ with $17$ positive roots:\\
$1$, $2$, $3$, $12$, $13$, $1^{2}2$, $123$, $1^{3}2$, $1^{2}23$, $1^{3}23$, $1^{2}2^{2}3$, $1^{3}2^{2}3$, $1^{4}2^{2}3$, $1^{5}2^{2}3$, $1^{5}2^{3}3$, $1^{5}2^{3}3^{2}$, $1^{6}2^{3}3^{2}$\\
Nr. $17$ with $18$ positive roots:\\
$1$, $2$, $3$, $12$, $13$, $1^{2}2$, $1^{2}3$, $123$, $1^{3}2$, $1^{2}23$, $1^{3}2^{2}$, $1^{3}23$, $1^{4}23$, $1^{3}2^{2}3$, $1^{4}2^{2}3$, $1^{5}2^{2}3$, $1^{5}2^{3}3$, $1^{6}2^{3}3$\\
Nr. $18$ with $18$ positive roots:\\
$1$, $2$, $3$, $12$, $13$, $23$, $1^{2}2$, $123$, $1^{3}2$, $1^{2}23$, $12^{2}3$, $1^{3}23$, $1^{2}2^{2}3$, $1^{3}2^{2}3$, $1^{4}2^{2}3$, $1^{3}2^{3}3$, $1^{4}2^{3}3$, $1^{4}2^{3}3^{2}$\\
Nr. $19$ with $19$ positive roots:\\
$1$, $2$, $3$, $12$, $13$, $1^{2}2$, $123$, $1^{3}2$, $1^{2}23$, $1^{4}2$, $1^{3}23$, $1^{4}23$, $1^{3}2^{2}3$, $1^{4}2^{2}3$, $1^{5}2^{2}3$, $1^{6}2^{2}3$, $1^{6}2^{3}3$, $1^{7}2^{3}3$, $1^{7}2^{3}3^{2}$\\
Nr. $20$ with $19$ positive roots:\\
$1$, $2$, $3$, $12$, $23$, $1^{2}2$, $123$, $1^{3}2$, $1^{2}23$, $1^{4}2$, $1^{3}23$, $1^{2}2^{2}3$, $1^{4}23$, $1^{3}2^{2}3$, $1^{4}2^{2}3$, $1^{5}2^{2}3$, $1^{6}2^{2}3$, $1^{6}2^{3}3$, $1^{6}2^{3}3^{2}$\\
Nr. $21$ with $19$ positive roots:\\
$1$, $2$, $3$, $12$, $13$, $23$, $1^{2}2$, $12^{2}$, $123$, $1^{3}2$, $1^{2}23$, $12^{2}3$, $1^{3}23$, $1^{2}2^{2}3$, $1^{3}2^{2}3$, $1^{4}2^{2}3$, $1^{3}2^{3}3$, $1^{4}2^{3}3$, $1^{4}2^{3}3^{2}$\\
Nr. $22$ with $19$ positive roots:\\
$1$, $2$, $3$, $12$, $13$, $23$, $1^{2}2$, $123$, $1^{3}2$, $1^{2}23$, $12^{2}3$, $1^{3}2^{2}$, $1^{3}23$, $1^{2}2^{2}3$, $1^{3}2^{2}3$, $1^{4}2^{2}3$, $1^{3}2^{3}3$, $1^{4}2^{3}3$, $1^{4}2^{3}3^{2}$\\
Nr. $23$ with $19$ positive roots:\\
$1$, $2$, $3$, $12$, $23$, $1^{2}2$, $123$, $1^{3}2$, $1^{2}23$, $1^{3}2^{2}$, $1^{3}23$, $1^{2}2^{2}3$, $1^{3}2^{2}3$, $1^{4}2^{2}3$, $1^{3}2^{3}3$, $1^{4}2^{3}3$, $1^{5}2^{3}3$, $1^{6}2^{3}3$, $1^{6}2^{4}3$\\
Nr. $24$ with $20$ positive roots:\\
$1$, $2$, $3$, $12$, $13$, $1^{2}2$, $123$, $1^{3}2$, $1^{2}23$, $1^{4}2$, $1^{3}2^{2}$, $1^{3}23$, $1^{4}23$, $1^{3}2^{2}3$, $1^{5}2^{2}$, $1^{4}2^{2}3$, $1^{5}2^{2}3$, $1^{6}2^{2}3$, $1^{6}2^{3}3$, $1^{7}2^{3}3$\\
Nr. $25$ with $20$ positive roots:\\
$1$, $2$, $3$, $12$, $13$, $1^{2}2$, $123$, $1^{3}2$, $1^{2}23$, $1^{4}2$, $1^{3}2^{2}$, $1^{3}23$, $1^{4}23$, $1^{3}2^{2}3$, $1^{4}2^{2}3$, $1^{5}2^{2}3$, $1^{6}2^{2}3$, $1^{6}2^{3}3$, $1^{7}2^{3}3$, $1^{7}2^{3}3^{2}$\\
Nr. $26$ with $20$ positive roots:\\
$1$, $2$, $3$, $12$, $13$, $1^{2}2$, $1^{2}3$, $123$, $1^{3}2$, $1^{2}23$, $1^{3}2^{2}$, $1^{3}23$, $1^{2}2^{2}3$, $1^{4}23$, $1^{3}2^{2}3$, $1^{4}2^{2}3$, $1^{5}2^{2}3$, $1^{4}2^{3}3$, $1^{5}2^{3}3$, $1^{6}2^{3}3^{2}$\\
Nr. $27$ with $21$ positive roots:\\
$1$, $2$, $3$, $12$, $13$, $1^{2}2$, $123$, $1^{3}2$, $1^{2}23$, $1^{4}2$, $1^{3}2^{2}$, $1^{3}23$, $1^{4}23$, $1^{3}2^{2}3$, $1^{5}2^{2}$, $1^{4}2^{2}3$, $1^{5}2^{2}3$, $1^{6}2^{2}3$, $1^{6}2^{3}3$, $1^{7}2^{3}3$, $1^{7}2^{3}3^{2}$\\
Nr. $28$ with $21$ positive roots:\\
$1$, $2$, $3$, $12$, $13$, $1^{2}2$, $1^{2}3$, $123$, $1^{3}2$, $1^{2}23$, $1^{3}2^{2}$, $1^{3}23$, $1^{2}2^{2}3$, $1^{4}23$, $1^{3}2^{2}3$, $1^{4}2^{2}3$, $1^{5}2^{2}3$, $1^{4}2^{3}3$, $1^{5}2^{3}3$, $1^{6}2^{3}3$, $1^{6}2^{3}3^{2}$\\
Nr. $29$ with $21$ positive roots:\\
$1$, $2$, $3$, $12$, $13$, $1^{2}2$, $1^{2}3$, $123$, $1^{3}2$, $1^{2}23$, $1^{3}2^{2}$, $1^{3}23$, $1^{4}23$, $1^{3}2^{2}3$, $1^{4}2^{2}3$, $1^{5}2^{2}3$, $1^{5}2^{3}3$, $1^{5}2^{2}3^{2}$, $1^{6}2^{3}3$, $1^{6}2^{3}3^{2}$, $1^{7}2^{3}3^{2}$\\
Nr. $30$ with $22$ positive roots:\\
$1$, $2$, $3$, $12$, $13$, $1^{2}2$, $1^{2}3$, $123$, $1^{3}2$, $1^{2}23$, $1^{3}2^{2}$, $1^{3}23$, $1^{2}2^{2}3$, $1^{4}23$, $1^{3}2^{2}3$, $1^{4}2^{2}3$, $1^{5}2^{2}3$, $1^{4}2^{3}3$, $1^{5}2^{3}3$, $1^{5}2^{2}3^{2}$, $1^{5}2^{3}3^{2}$, 
$1^{6}2^{3}3^{2}$\\
Nr. $31$ with $25$ positive roots:\\
$1$, $2$, $3$, $12$, $13$, $1^{2}2$, $1^{2}3$, $123$, $1^{3}2$, $1^{2}23$, $1^{4}2$, $1^{3}2^{2}$, $1^{3}23$, $1^{4}23$, $1^{3}2^{2}3$, $1^{5}2^{2}$, $1^{5}23$, $1^{4}2^{2}3$, $1^{5}2^{2}3$, $1^{6}2^{2}3$, $1^{7}2^{2}3$, $1^{6}2^{3}3$, $1^{7}2^{3}3$, 
$1^{8}2^{3}3$, $1^{8}2^{3}3^{2}$\\
Nr. $32$ with $25$ positive roots:\\
$1$, $2$, $3$, $12$, $13$, $1^{2}2$, $1^{2}3$, $123$, $1^{3}2$, $1^{2}23$, $1^{4}2$, $1^{3}23$, $1^{4}23$, $1^{3}2^{2}3$, $1^{5}23$, $1^{4}2^{2}3$, $1^{5}2^{2}3$, $1^{6}2^{2}3$, $1^{7}2^{2}3$, $1^{6}2^{3}3$, $1^{7}2^{3}3$, $1^{8}2^{3}3$, 
$1^{7}2^{3}3^{2}$, $1^{8}2^{3}3^{2}$, $1^{9}2^{3}3^{2}$\\
Nr. $33$ with $25$ positive roots:\\
$1$, $2$, $3$, $12$, $13$, $1^{2}2$, $1^{2}3$, $12^{2}$, $123$, $1^{3}2$, $1^{2}23$, $12^{2}3$, $1^{3}23$, $1^{2}2^{2}3$, $1^{4}23$, $1^{3}2^{2}3$, $1^{4}2^{2}3$, $1^{3}2^{3}3$, $1^{3}2^{2}3^{2}$, $1^{4}2^{3}3$, $1^{5}2^{3}3$, $1^{4}2^{3}3^{2}$, 
$1^{5}2^{3}3^{2}$, $1^{6}2^{3}3^{2}$, $1^{7}2^{4}3^{2}$\\
Nr. $34$ with $25$ positive roots:\\
$1$, $2$, $3$, $12$, $13$, $1^{2}2$, $1^{2}3$, $123$, $1^{3}2$, $1^{2}23$, $1^{3}2^{2}$, $1^{3}23$, $1^{2}2^{2}3$, $1^{4}23$, $1^{3}2^{2}3$, $1^{4}2^{2}3$, $1^{5}2^{2}3$, $1^{4}2^{3}3$, $1^{5}2^{3}3$, $1^{5}2^{2}3^{2}$, $1^{6}2^{3}3$, $1^{5}2^{3}3^{2}$, 
$1^{6}2^{3}3^{2}$, $1^{7}2^{3}3^{2}$, $1^{7}2^{4}3^{2}$\\
Nr. $35$ with $26$ positive roots:\\
$1$, $2$, $3$, $12$, $13$, $1^{2}2$, $1^{2}3$, $123$, $1^{3}2$, $1^{2}23$, $1^{4}2$, $1^{3}2^{2}$, $1^{3}23$, $1^{4}23$, $1^{3}2^{2}3$, $1^{5}2^{2}$, $1^{5}23$, $1^{4}2^{2}3$, $1^{5}2^{2}3$, $1^{6}2^{2}3$, $1^{7}2^{2}3$, $1^{6}2^{3}3$, $1^{7}2^{3}3$, 
$1^{8}2^{3}3$, $1^{7}2^{3}3^{2}$, $1^{8}2^{3}3^{2}$\\
Nr. $36$ with $26$ positive roots:\\
$1$, $2$, $3$, $12$, $13$, $1^{2}2$, $1^{2}3$, $123$, $1^{3}2$, $1^{2}23$, $1^{4}2$, $1^{3}2^{2}$, $1^{3}23$, $1^{4}23$, $1^{3}2^{2}3$, $1^{5}23$, $1^{4}2^{2}3$, $1^{5}2^{2}3$, $1^{6}2^{2}3$, $1^{7}2^{2}3$, $1^{6}2^{3}3$, $1^{7}2^{3}3$, $1^{8}2^{3}3$, 
$1^{7}2^{3}3^{2}$, $1^{8}2^{3}3^{2}$, $1^{9}2^{3}3^{2}$\\
Nr. $37$ with $27$ positive roots:\\
$1$, $2$, $3$, $12$, $13$, $1^{2}2$, $1^{2}3$, $123$, $1^{3}2$, $1^{2}23$, $1^{4}2$, $1^{3}2^{2}$, $1^{3}23$, $1^{4}23$, $1^{3}2^{2}3$, $1^{5}2^{2}$, $1^{5}23$, $1^{4}2^{2}3$, $1^{5}2^{2}3$, $1^{6}2^{2}3$, $1^{7}2^{2}3$, $1^{6}2^{3}3$, $1^{7}2^{3}3$, 
$1^{8}2^{3}3$, $1^{7}2^{3}3^{2}$, $1^{8}2^{3}3^{2}$, $1^{9}2^{3}3^{2}$\\
Nr. $38$ with $27$ positive roots:\\
$1$, $2$, $3$, $12$, $13$, $1^{2}2$, $1^{2}3$, $123$, $1^{3}2$, $1^{2}23$, $1^{4}2$, $1^{3}2^{2}$, $1^{3}23$, $1^{4}23$, $1^{3}2^{2}3$, $1^{5}23$, $1^{4}2^{2}3$, $1^{5}2^{2}3$, $1^{6}2^{2}3$, $1^{5}2^{2}3^{2}$, $1^{7}2^{2}3$, $1^{6}2^{3}3$, $1^{7}2^{3}3$,
$1^{8}2^{3}3$, $1^{7}2^{3}3^{2}$, $1^{8}2^{3}3^{2}$, $1^{9}2^{3}3^{2}$\\
Nr. $39$ with $27$ positive roots:\\
$1$, $2$, $3$, $12$, $13$, $1^{2}2$, $1^{2}3$, $123$, $1^{3}2$, $1^{2}23$, $1^{4}2$, $1^{3}2^{2}$, $1^{3}23$, $1^{4}23$, $1^{3}2^{2}3$, $1^{5}23$, $1^{4}2^{2}3$, $1^{5}2^{2}3$, $1^{6}2^{2}3$, $1^{7}2^{2}3$, $1^{6}2^{3}3$, $1^{7}2^{3}3$, $1^{7}2^{2}3^{2}$,
$1^{8}2^{3}3$, $1^{7}2^{3}3^{2}$, $1^{8}2^{3}3^{2}$, $1^{9}2^{3}3^{2}$\\
Nr. $40$ with $28$ positive roots:\\
$1$, $2$, $3$, $12$, $13$, $1^{2}2$, $1^{2}3$, $123$, $1^{3}2$, $1^{2}23$, $1^{4}2$, $1^{3}2^{2}$, $1^{3}23$, $1^{4}23$, $1^{3}2^{2}3$, $1^{5}2^{2}$, $1^{5}23$, $1^{4}2^{2}3$, $1^{5}2^{2}3$, $1^{6}2^{2}3$, $1^{5}2^{2}3^{2}$, $1^{7}2^{2}3$, $1^{6}2^{3}3$, 
$1^{7}2^{3}3$, $1^{8}2^{3}3$, $1^{7}2^{3}3^{2}$, $1^{8}2^{3}3^{2}$, $1^{9}2^{3}3^{2}$\\
Nr. $41$ with $28$ positive roots:\\
$1$, $2$, $3$, $12$, $13$, $1^{2}2$, $1^{2}3$, $123$, $1^{3}2$, $1^{2}23$, $1^{4}2$, $1^{3}2^{2}$, $1^{3}23$, $1^{4}23$, $1^{3}2^{2}3$, $1^{5}23$, $1^{4}2^{2}3$, $1^{5}2^{2}3$, $1^{6}2^{2}3$, $1^{5}2^{2}3^{2}$, $1^{7}2^{2}3$, $1^{6}2^{3}3$, $1^{7}2^{3}3$,
$1^{8}2^{3}3$, $1^{7}2^{3}3^{2}$, $1^{8}2^{3}3^{2}$, $1^{9}2^{3}3^{2}$, $1^{9}2^{4}3^{2}$\\
Nr. $42$ with $28$ positive roots:\\
$1$, $2$, $3$, $12$, $13$, $1^{2}2$, $1^{2}3$, $123$, $1^{3}2$, $1^{2}23$, $1^{4}2$, $1^{3}2^{2}$, $1^{3}23$, $1^{4}23$, $1^{3}2^{2}3$, $1^{5}23$, $1^{4}2^{2}3$, $1^{5}2^{2}3$, $1^{6}2^{2}3$, $1^{5}2^{2}3^{2}$, $1^{7}2^{2}3$, $1^{6}2^{3}3$, $1^{7}2^{3}3$,
$1^{8}2^{3}3$, $1^{7}2^{3}3^{2}$, $1^{8}2^{3}3^{2}$, $1^{9}2^{3}3^{2}$, $1^{11}2^{4}3^{2}$\\
Nr. $43$ with $29$ positive roots:\\
$1$, $2$, $3$, $12$, $13$, $1^{2}2$, $1^{2}3$, $123$, $1^{3}2$, $1^{2}23$, $1^{4}2$, $1^{3}2^{2}$, $1^{3}23$, $1^{4}23$, $1^{3}2^{2}3$, $1^{5}2^{2}$, $1^{5}23$, $1^{4}2^{2}3$, $1^{5}2^{2}3$, $1^{6}2^{2}3$, $1^{5}2^{2}3^{2}$, $1^{7}2^{2}3$, $1^{6}2^{3}3$, 
$1^{7}2^{3}3$, $1^{7}2^{2}3^{2}$, $1^{8}2^{3}3$, $1^{7}2^{3}3^{2}$, $1^{8}2^{3}3^{2}$, $1^{9}2^{3}3^{2}$\\
Nr. $44$ with $29$ positive roots:\\
$1$, $2$, $3$, $12$, $13$, $1^{2}2$, $1^{2}3$, $123$, $1^{3}2$, $1^{2}23$, $1^{4}2$, $1^{3}2^{2}$, $1^{3}23$, $1^{4}23$, $1^{3}2^{2}3$, $1^{5}2^{2}$, $1^{5}23$, $1^{4}2^{2}3$, $1^{5}2^{2}3$, $1^{6}2^{2}3$, $1^{5}2^{2}3^{2}$, $1^{7}2^{2}3$, $1^{6}2^{3}3$, 
$1^{7}2^{3}3$, $1^{8}2^{3}3$, $1^{7}2^{3}3^{2}$, $1^{8}2^{3}3^{2}$, $1^{9}2^{3}3^{2}$, $1^{9}2^{4}3^{2}$\\
Nr. $45$ with $29$ positive roots:\\
$1$, $2$, $3$, $12$, $13$, $1^{2}2$, $1^{2}3$, $123$, $1^{3}2$, $1^{2}23$, $1^{4}2$, $1^{3}2^{2}$, $1^{3}23$, $1^{4}23$, $1^{3}2^{2}3$, $1^{5}2^{2}$, $1^{5}23$, $1^{4}2^{2}3$, $1^{5}2^{2}3$, $1^{6}2^{2}3$, $1^{5}2^{2}3^{2}$, $1^{7}2^{2}3$, $1^{6}2^{3}3$, 
$1^{7}2^{3}3$, $1^{8}2^{3}3$, $1^{7}2^{3}3^{2}$, $1^{8}2^{3}3^{2}$, $1^{9}2^{3}3^{2}$, $1^{11}2^{4}3^{2}$\\
Nr. $46$ with $30$ positive roots:\\
$1$, $2$, $3$, $12$, $13$, $1^{2}2$, $1^{2}3$, $123$, $1^{3}2$, $1^{2}23$, $1^{4}2$, $1^{3}2^{2}$, $1^{3}23$, $1^{4}23$, $1^{3}2^{2}3$, $1^{5}2^{2}$, $1^{5}23$, $1^{4}2^{2}3$, $1^{5}2^{2}3$, $1^{6}2^{2}3$, $1^{5}2^{2}3^{2}$, $1^{7}2^{2}3$, $1^{6}2^{3}3$, 
$1^{7}2^{3}3$, $1^{7}2^{2}3^{2}$, $1^{8}2^{3}3$, $1^{7}2^{3}3^{2}$, $1^{8}2^{3}3^{2}$, $1^{9}2^{3}3^{2}$, $1^{9}2^{4}3^{2}$\\
Nr. $47$ with $31$ positive roots:\\
$1$, $2$, $3$, $12$, $13$, $1^{2}2$, $1^{2}3$, $123$, $1^{3}2$, $1^{2}23$, $1^{4}2$, $1^{3}23$, $1^{5}2$, $1^{4}23$, $1^{6}2$, $1^{5}23$, $1^{4}2^{2}3$, $1^{6}23$, $1^{5}2^{2}3$, $1^{7}23$, $1^{6}2^{2}3$, $1^{7}2^{2}3$, $1^{8}2^{2}3$, $1^{9}2^{2}3$, 
$1^{10}2^{2}3$, $1^{9}2^{3}3$, $1^{10}2^{3}3$, $1^{11}2^{3}3$, $1^{10}2^{3}3^{2}$, $1^{11}2^{3}3^{2}$, $1^{12}2^{3}3^{2}$\\
Nr. $48$ with $31$ positive roots:\\
$1$, $2$, $3$, $12$, $13$, $1^{2}2$, $1^{2}3$, $123$, $1^{3}2$, $1^{2}23$, $1^{4}2$, $1^{3}2^{2}$, $1^{3}23$, $1^{4}23$, $1^{3}2^{2}3$, $1^{5}2^{2}$, $1^{5}23$, $1^{4}2^{2}3$, $1^{5}2^{2}3$, $1^{6}2^{2}3$, $1^{5}2^{2}3^{2}$, $1^{7}2^{2}3$, $1^{6}2^{3}3$, 
$1^{7}2^{3}3$, $1^{7}2^{2}3^{2}$, $1^{8}2^{3}3$, $1^{7}2^{3}3^{2}$, $1^{8}2^{3}3^{2}$, $1^{9}2^{3}3^{2}$, $1^{9}2^{4}3^{2}$, $1^{11}2^{4}3^{2}$\\
Nr. $49$ with $34$ positive roots:\\
$1$, $2$, $3$, $12$, $13$, $1^{2}2$, $1^{2}3$, $123$, $1^{3}2$, $1^{2}23$, $1^{4}2$, $1^{3}2^{2}$, $1^{3}23$, $1^{4}23$, $1^{3}2^{2}3$, $1^{5}2^{2}$, $1^{5}23$, $1^{4}2^{2}3$, $1^{5}2^{2}3$, $1^{6}2^{2}3$, $1^{5}2^{3}3$, $1^{7}2^{2}3$, $1^{6}2^{3}3$, 
$1^{7}2^{3}3$, $1^{8}2^{3}3$, $1^{7}2^{3}3^{2}$, $1^{8}2^{4}3$, $1^{8}2^{3}3^{2}$, $1^{9}2^{4}3$, $1^{9}2^{3}3^{2}$, $1^{9}2^{4}3^{2}$, $1^{11}2^{4}3^{2}$, $1^{11}2^{5}3^{2}$, $1^{12}2^{5}3^{2}$\\
Nr. $50$ with $37$ positive roots:\\
$1$, $2$, $3$, $12$, $13$, $1^{2}2$, $1^{2}3$, $123$, $1^{3}2$, $1^{2}23$, $1^{4}2$, $1^{3}2^{2}$, $1^{3}23$, $1^{4}23$, $1^{3}2^{2}3$, $1^{5}2^{2}$, $1^{5}23$, $1^{4}2^{2}3$, $1^{5}2^{2}3$, $1^{6}2^{2}3$, $1^{5}2^{3}3$, $1^{7}2^{2}3$, $1^{6}2^{3}3$, 
$1^{7}2^{3}3$, $1^{8}2^{3}3$, $1^{7}2^{3}3^{2}$, $1^{9}2^{3}3$, $1^{8}2^{4}3$, $1^{8}2^{3}3^{2}$, $1^{9}2^{4}3$, $1^{9}2^{3}3^{2}$, $1^{10}2^{4}3$, $1^{9}2^{4}3^{2}$, $1^{11}2^{4}3^{2}$, $1^{11}2^{5}3^{2}$, $1^{12}2^{5}3^{2}$, $1^{13}2^{5}3^{2}$
\end{tiny}

\subsection{Rank 4}
\hspace{12pt}\\

\baselineskip=10pt
\begin{tiny}
\noindent
Nr. $1$ with $15$ positive roots:\\
$1$, $2$, $3$, $4$, $12$, $13$, $14$, $23$, $123$, $124$, $134$, $1^{2}24$, $1234$, $1^{2}234$, $1^{2}2^{2}34$\\
Nr. $2$ with $17$ positive roots:\\
$1$, $2$, $3$, $4$, $12$, $13$, $14$, $1^{2}2$, $123$, $124$, $134$, $1^{2}23$, $1^{2}24$, $1234$, $1^{2}234$, $1^{3}234$, 
$1^{3}2^{2}34$\\
Nr. $3$ with $18$ positive roots:\\
$1$, $2$, $3$, $4$, $12$, $13$, $24$, $1^{2}2$, $123$, $124$, $1^{2}23$, $1^{2}24$, $1234$, $1^{2}2^{2}4$, $1^{2}234$, 
$1^{2}2^{2}34$, $1^{3}2^{2}34$, $1^{3}2^{2}3^{2}4$\\
Nr. $4$ with $21$ positive roots:\\
$1$, $2$, $3$, $4$, $12$, $13$, $14$, $23$, $1^{2}2$, $123$, $124$, $134$, $1^{2}23$, $1^{2}24$, $1234$, $1^{2}2^{2}3$, $1^{2}234$,
$1^{3}234$, $1^{2}2^{2}34$, $1^{3}2^{2}34$, $1^{3}2^{2}3^{2}4$\\
Nr. $5$ with $22$ positive roots:\\
$1$, $2$, $3$, $4$, $12$, $13$, $24$, $1^{2}2$, $1^{2}3$, $123$, $124$, $1^{2}23$, $1^{2}24$, $1234$, $1^{3}23$, $1^{2}2^{2}4$, 
$1^{2}234$, $1^{3}234$, $1^{2}2^{2}34$, $1^{3}2^{2}34$, $1^{4}2^{2}34$, $1^{4}2^{2}3^{2}4$\\
Nr. $6$ with $24$ positive roots (type $F_4$):\\
$1$, $2$, $3$, $4$, $12$, $13$, $24$, $1^{2}2$, $123$, $124$, $1^{2}23$, $1^{2}24$, $1234$, $1^{2}2^{2}4$, $1^{2}23^{2}$, 
$1^{2}234$, $1^{2}2^{2}34$, $1^{2}23^{2}4$, $1^{3}2^{2}34$, $1^{2}2^{2}3^{2}4$, $1^{3}2^{2}3^{2}4$, $1^{4}2^{2}3^{2}4$, 
$1^{4}2^{3}3^{2}4$, $1^{4}2^{3}3^{2}4^{2}$\\
Nr. $7$ with $25$ positive roots:\\
$1$, $2$, $3$, $4$, $12$, $13$, $23$, $34$, $1^{2}2$, $123$, $134$, $234$, $1^{2}23$, $1234$, $13^{2}4$, $1^{2}2^{2}3$, $1^{2}234$,
$123^{2}4$, $1^{2}2^{2}34$, $1^{2}23^{2}4$, $1^{3}23^{2}4$, $1^{2}2^{2}3^{2}4$, $1^{3}2^{2}3^{2}4$, $1^{3}2^{2}3^{3}4$, 
$1^{3}2^{2}3^{3}4^{2}$\\
Nr. $8$ with $28$ positive roots:\\
$1$, $2$, $3$, $4$, $12$, $13$, $34$, $1^{2}2$, $1^{2}3$, $123$, $134$, $1^{2}23$, $1^{2}34$, $1234$, $1^{3}23$, $1^{2}234$, 
$1^{2}3^{2}4$, $1^{3}2^{2}3$, $1^{3}234$, $1^{2}23^{2}4$, $1^{3}2^{2}34$, $1^{3}23^{2}4$, $1^{4}23^{2}4$, $1^{3}2^{2}3^{2}4$, 
$1^{4}2^{2}3^{2}4$, $1^{5}2^{2}3^{2}4$, $1^{5}2^{2}3^{3}4$, $1^{5}2^{2}3^{3}4^{2}$\\
Nr. $9$ with $30$ positive roots:\\
$1$, $2$, $3$, $4$, $12$, $13$, $34$, $1^{2}2$, $123$, $134$, $1^{3}2$, $1^{2}23$, $1234$, $1^{3}2^{2}$, $1^{3}23$, $1^{2}234$, 
$1^{3}2^{2}3$, $1^{3}234$, $1^{2}23^{2}4$, $1^{4}2^{2}3$, $1^{3}2^{2}34$, $1^{3}23^{2}4$, $1^{4}2^{2}34$, $1^{3}2^{2}3^{2}4$, 
$1^{4}2^{2}3^{2}4$, $1^{5}2^{2}3^{2}4$, $1^{5}2^{3}3^{2}4$, $1^{6}2^{3}3^{2}4$, $1^{6}2^{3}3^{3}4$, $1^{6}2^{3}3^{3}4^{2}$\\
Nr. $10$ with $32$ positive roots:\\
$1$, $2$, $3$, $4$, $12$, $13$, $34$, $1^{2}2$, $1^{2}3$, $123$, $134$, $1^{3}2$, $1^{2}23$, $1^{2}34$, $1234$, $1^{3}23$, 
$1^{2}234$, $1^{2}3^{2}4$, $1^{4}23$, $1^{3}234$, $1^{2}23^{2}4$, $1^{4}2^{2}3$, $1^{4}234$, $1^{3}23^{2}4$, $1^{4}2^{2}34$, 
$1^{4}23^{2}4$, $1^{5}23^{2}4$, $1^{4}2^{2}3^{2}4$, $1^{5}2^{2}3^{2}4$, $1^{6}2^{2}3^{2}4$, $1^{6}2^{2}3^{3}4$, 
$1^{6}2^{2}3^{3}4^{2}$\\
Nr. $11$ with $32$ positive roots:\\
$1$, $2$, $3$, $4$, $12$, $13$, $24$, $1^{2}2$, $1^{2}3$, $123$, $124$, $1^{2}23$, $1^{2}24$, $1234$, $1^{3}23$, $1^{2}2^{2}3$, 
$1^{2}2^{2}4$, $1^{2}234$, $1^{3}2^{2}3$, $1^{3}234$, $1^{2}2^{2}34$, $1^{4}2^{2}3$, $1^{3}2^{2}34$, $1^{4}2^{2}34$, 
$1^{3}2^{3}34$, $1^{4}2^{3}34$, $1^{4}2^{2}3^{2}4$, $1^{5}2^{3}34$, $1^{4}2^{3}3^{2}4$, $1^{5}2^{3}3^{2}4$, $1^{6}2^{3}3^{2}4$, 
$1^{6}2^{4}3^{2}4$
\end{tiny}

\subsection{Rank 5}
\hspace{12pt}\\

\baselineskip=10pt
\begin{tiny}
\noindent
Nr. $1$ with $25$ positive roots:\\
$1$, $2$, $3$, $4$, $5$, $12$, $13$, $14$, $23$, $25$, $123$, $124$, $125$, $134$, $235$, $1234$, $1235$, $1245$, $1^{2}234$, 
$12^{2}35$, $12345$, $1^{2}2345$, $12^{2}345$, $1^{2}2^{2}345$, $1^{2}2^{2}3^{2}45$\\
Nr. $2$ with $30$ positive roots:\\
$1$, $2$, $3$, $4$, $5$, $12$, $13$, $14$, $23$, $35$, $123$, $124$, $134$, $135$, $235$, $1^{2}24$, $1234$, $1235$, $1345$, 
$1^{2}234$, $123^{2}5$, $12345$, $1^{2}2^{2}34$, $1^{2}2345$, $123^{2}45$, $1^{2}2^{2}345$, $1^{2}23^{2}45$, $1^{2}2^{2}3^{2}45$, 
$1^{3}2^{2}3^{2}45$, $1^{3}2^{2}3^{2}4^{2}5$\\
Nr. $3$ with $33$ positive roots:\\
$1$, $2$, $3$, $4$, $5$, $12$, $13$, $14$, $35$, $1^{2}2$, $123$, $124$, $134$, $135$, $1^{2}23$, $1^{2}24$, $1234$, $1235$, 
$1345$, $1^{2}234$, $1^{2}235$, $12345$, $1^{3}234$, $1^{2}23^{2}5$, $1^{2}2345$, $1^{3}2^{2}34$, $1^{3}2345$, $1^{2}23^{2}45$, 
$1^{3}2^{2}345$, $1^{3}23^{2}45$, $1^{3}2^{2}3^{2}45$, $1^{4}2^{2}3^{2}45$, $1^{4}2^{2}3^{2}4^{2}5$\\
Nr. $4$ with $41$ positive roots:\\
$1$, $2$, $3$, $4$, $5$, $12$, $13$, $24$, $45$, $1^{2}2$, $123$, $124$, $245$, $1^{2}23$, $1^{2}24$, $1234$, $1245$, 
$1^{2}2^{2}4$, $1^{2}234$, $1^{2}245$, $12345$, $1^{2}2^{2}34$, $1^{2}2^{2}45$, $1^{2}2345$, $1^{3}2^{2}34$, $1^{2}2^{2}345$, 
$1^{2}2^{2}4^{2}5$, $1^{3}2^{2}3^{2}4$, $1^{3}2^{2}345$, $1^{2}2^{2}34^{2}5$, $1^{3}2^{2}3^{2}45$, $1^{3}2^{2}34^{2}5$, 
$1^{3}2^{3}34^{2}5$, $1^{3}2^{2}3^{2}4^{2}5$, $1^{4}2^{3}34^{2}5$, $1^{3}2^{3}3^{2}4^{2}5$, $1^{4}2^{3}3^{2}4^{2}5$, 
$1^{5}2^{3}3^{2}4^{2}5$, $1^{5}2^{4}3^{2}4^{2}5$, $1^{5}2^{4}3^{2}4^{3}5$, $1^{5}2^{4}3^{2}4^{3}5^{2}$\\
Nr. $5$ with $46$ positive roots:\\
$1$, $2$, $3$, $4$, $5$, $12$, $13$, $14$, $23$, $45$, $1^{2}2$, $123$, $124$, $134$, $145$, $1^{2}23$, $1^{2}24$, $1234$, $1245$, 
$1345$, $1^{2}2^{2}3$, $1^{2}234$, $1^{2}245$, $12345$, $1^{3}234$, $1^{2}2^{2}34$, $1^{2}2345$, $1^{2}24^{2}5$, $1^{3}2^{2}34$, 
$1^{3}2345$, $1^{2}2^{2}345$, $1^{2}234^{2}5$, $1^{3}2^{2}3^{2}4$, $1^{3}2^{2}345$, $1^{3}234^{2}5$, $1^{2}2^{2}34^{2}5$, 
$1^{3}2^{2}3^{2}45$, $1^{3}2^{2}34^{2}5$, $1^{4}2^{2}34^{2}5$, $1^{3}2^{2}3^{2}4^{2}5$, $1^{4}2^{3}34^{2}5$, 
$1^{4}2^{2}3^{2}4^{2}5$, $1^{4}2^{3}3^{2}4^{2}5$, $1^{5}2^{3}3^{2}4^{2}5$, $1^{5}2^{3}3^{2}4^{3}5$, $1^{5}2^{3}3^{2}4^{3}5^{2}$\\
Nr. $6$ with $49$ positive roots:\\
$1$, $2$, $3$, $4$, $5$, $12$, $13$, $24$, $45$, $1^{2}2$, $1^{2}3$, $123$, $124$, $245$, $1^{2}23$, $1^{2}24$, $1234$, $1245$, 
$1^{3}23$, $1^{2}2^{2}4$, $1^{2}234$, $1^{2}245$, $12345$, $1^{3}234$, $1^{2}2^{2}34$, $1^{2}2^{2}45$, $1^{2}2345$, $1^{3}2^{2}34$,
$1^{3}2345$, $1^{2}2^{2}345$, $1^{2}2^{2}4^{2}5$, $1^{4}2^{2}34$, $1^{3}2^{2}345$, $1^{2}2^{2}34^{2}5$, $1^{4}2^{2}3^{2}4$, 
$1^{4}2^{2}345$, $1^{3}2^{2}34^{2}5$, $1^{4}2^{2}3^{2}45$, $1^{4}2^{2}34^{2}5$, $1^{3}2^{3}34^{2}5$, $1^{4}2^{3}34^{2}5$, 
$1^{4}2^{2}3^{2}4^{2}5$, $1^{5}2^{3}34^{2}5$, $1^{4}2^{3}3^{2}4^{2}5$, $1^{5}2^{3}3^{2}4^{2}5$, $1^{6}2^{3}3^{2}4^{2}5$, 
$1^{6}2^{4}3^{2}4^{2}5$, $1^{6}2^{4}3^{2}4^{3}5$, $1^{6}2^{4}3^{2}4^{3}5^{2}$
\end{tiny}

\subsection{Rank 6}
\hspace{12pt}\\

\baselineskip=10pt
\begin{tiny}
\noindent
Nr. $1$ with $36$ positive roots (type $E_6$):\\
$1$, $2$, $3$, $4$, $5$, $6$, $12$, $13$, $14$, $25$, $36$, $123$, $124$, $125$, $134$, $136$, $1234$, $1235$, $1236$, $1245$, 
$1346$, $1^{2}234$, $12345$, $12346$, $12356$, $1^{2}2345$, $1^{2}2346$, $123456$, $1^{2}2^{2}345$, $1^{2}23^{2}46$, $1^{2}23456$, 
$1^{2}2^{2}3456$, $1^{2}23^{2}456$, $1^{2}2^{2}3^{2}456$, $1^{3}2^{2}3^{2}456$, $1^{3}2^{2}3^{2}4^{2}56$\\
Nr. $2$ with $46$ positive roots:\\
$1$, $2$, $3$, $4$, $5$, $6$, $12$, $13$, $14$, $23$, $25$, $46$, $123$, $124$, $125$, $134$, $146$, $235$, $1234$, $1235$, $1245$,
$1246$, $1346$, $1^{2}234$, $12^{2}35$, $12345$, $12346$, $12456$, $1^{2}2345$, $1^{2}2346$, $12^{2}345$, $123456$, 
$1^{2}2^{2}345$, $1^{2}234^{2}6$, $1^{2}23456$, $12^{2}3456$, $1^{2}2^{2}3^{2}45$, $1^{2}2^{2}3456$, $1^{2}234^{2}56$, 
$1^{2}2^{2}3^{2}456$, $1^{2}2^{2}34^{2}56$, $1^{3}2^{2}34^{2}56$, $1^{2}2^{2}3^{2}4^{2}56$, $1^{3}2^{2}3^{2}4^{2}56$, 
$1^{3}2^{3}3^{2}4^{2}56$, $1^{3}2^{3}3^{2}4^{2}5^{2}6$\\
Nr. $3$ with $63$ positive roots:\\
$1$, $2$, $3$, $4$, $5$, $6$, $12$, $13$, $14$, $23$, $35$, $56$, $123$, $124$, $134$, $135$, $235$, $356$, $1^{2}24$, $1234$, 
$1235$, $1345$, $1356$, $2356$, $1^{2}234$, $123^{2}5$, $12345$, $12356$, $13456$, $1^{2}2^{2}34$, $1^{2}2345$, $123^{2}45$, 
$123^{2}56$, $123456$, $1^{2}2^{2}345$, $1^{2}23^{2}45$, $1^{2}23456$, $123^{2}456$, $123^{2}5^{2}6$, $1^{2}2^{2}3^{2}45$, 
$1^{2}2^{2}3456$, $1^{2}23^{2}456$, $123^{2}45^{2}6$, $1^{3}2^{2}3^{2}45$, $1^{2}2^{2}3^{2}456$, $1^{2}23^{2}45^{2}6$, 
$1^{3}2^{2}3^{2}4^{2}5$, $1^{3}2^{2}3^{2}456$, $1^{2}2^{2}3^{2}45^{2}6$, $1^{2}23^{3}45^{2}6$, $1^{3}2^{2}3^{2}4^{2}56$, 
$1^{3}2^{2}3^{2}45^{2}6$, $1^{2}2^{2}3^{3}45^{2}6$, $1^{3}2^{2}3^{3}45^{2}6$, $1^{3}2^{2}3^{2}4^{2}5^{2}6$, 
$1^{3}2^{3}3^{3}45^{2}6$, $1^{3}2^{2}3^{3}4^{2}5^{2}6$, $1^{4}2^{2}3^{3}4^{2}5^{2}6$, $1^{3}2^{3}3^{3}4^{2}5^{2}6$, 
$1^{4}2^{3}3^{3}4^{2}5^{2}6$, $1^{4}2^{3}3^{4}4^{2}5^{2}6$, $1^{4}2^{3}3^{4}4^{2}5^{3}6$, $1^{4}2^{3}3^{4}4^{2}5^{3}6^{2}$\\
Nr. $4$ with $68$ positive roots:\\
$1$, $2$, $3$, $4$, $5$, $6$, $12$, $13$, $14$, $35$, $56$, $1^{2}2$, $123$, $124$, $134$, $135$, $356$, $1^{2}23$, $1^{2}24$, 
$1234$, $1235$, $1345$, $1356$, $1^{2}234$, $1^{2}235$, $12345$, $12356$, $13456$, $1^{3}234$, $1^{2}23^{2}5$, $1^{2}2345$, 
$1^{2}2356$, $123456$, $1^{3}2^{2}34$, $1^{3}2345$, $1^{2}23^{2}45$, $1^{2}23^{2}56$, $1^{2}23456$, $1^{3}2^{2}345$, 
$1^{3}23^{2}45$, $1^{3}23456$, $1^{2}23^{2}456$, $1^{2}23^{2}5^{2}6$, $1^{3}2^{2}3^{2}45$, $1^{3}2^{2}3456$, $1^{3}23^{2}456$, 
$1^{2}23^{2}45^{2}6$, $1^{4}2^{2}3^{2}45$, $1^{3}2^{2}3^{2}456$, $1^{3}23^{2}45^{2}6$, $1^{4}2^{2}3^{2}4^{2}5$, 
$1^{4}2^{2}3^{2}456$, $1^{3}2^{2}3^{2}45^{2}6$, $1^{3}23^{3}45^{2}6$, $1^{4}2^{2}3^{2}4^{2}56$, $1^{4}2^{2}3^{2}45^{2}6$, 
$1^{3}2^{2}3^{3}45^{2}6$, $1^{4}2^{2}3^{3}45^{2}6$, $1^{4}2^{2}3^{2}4^{2}5^{2}6$, $1^{5}2^{2}3^{3}45^{2}6$, 
$1^{4}2^{2}3^{3}4^{2}5^{2}6$, $1^{5}2^{3}3^{3}45^{2}6$, $1^{5}2^{2}3^{3}4^{2}5^{2}6$, $1^{5}2^{3}3^{3}4^{2}5^{2}6$, 
$1^{6}2^{3}3^{3}4^{2}5^{2}6$, $1^{6}2^{3}3^{4}4^{2}5^{2}6$, $1^{6}2^{3}3^{4}4^{2}5^{3}6$, $1^{6}2^{3}3^{4}4^{2}5^{3}6^{2}$
\end{tiny}

\subsection{Rank 7}
\hspace{12pt}\\

\baselineskip=10pt
\begin{tiny}
\noindent
Nr. $1$ with $63$ positive roots (type $E_7$):\\
$1$, $2$, $3$, $4$, $5$, $6$, $7$, $12$, $13$, $14$, $25$, $36$, $57$, $123$, $124$, $125$, $134$, $136$, $257$, $1234$, $1235$, 
$1236$, $1245$, $1257$, $1346$, $1^{2}234$, $12345$, $12346$, $12356$, $12357$, $12457$, $1^{2}2345$, $1^{2}2346$, $123456$, 
$123457$, $123567$, $1^{2}2^{2}345$, $1^{2}23^{2}46$, $1^{2}23456$, $1^{2}23457$, $1234567$, $1^{2}2^{2}3456$, $1^{2}2^{2}3457$, 
$1^{2}23^{2}456$, $1^{2}234567$, $1^{2}2^{2}3^{2}456$, $1^{2}2^{2}345^{2}7$, $1^{2}2^{2}34567$, $1^{2}23^{2}4567$, 
$1^{3}2^{2}3^{2}456$, $1^{2}2^{2}3^{2}4567$, $1^{2}2^{2}345^{2}67$, $1^{3}2^{2}3^{2}4^{2}56$, $1^{3}2^{2}3^{2}4567$, 
$1^{2}2^{2}3^{2}45^{2}67$, $1^{3}2^{2}3^{2}4^{2}567$, $1^{3}2^{2}3^{2}45^{2}67$, $1^{3}2^{3}3^{2}45^{2}67$, 
$1^{3}2^{2}3^{2}4^{2}5^{2}67$, $1^{3}2^{3}3^{2}4^{2}5^{2}67$, $1^{4}2^{3}3^{2}4^{2}5^{2}67$, $1^{4}2^{3}3^{3}4^{2}5^{2}67$, 
$1^{4}2^{3}3^{3}4^{2}5^{2}6^{2}7$\\
Nr. $2$ with $91$ positive roots:\\
$1$, $2$, $3$, $4$, $5$, $6$, $7$, $12$, $13$, $14$, $23$, $25$, $46$, $67$, $123$, $124$, $125$, $134$, $146$, $235$, $467$, 
$1234$, $1235$, $1245$, $1246$, $1346$, $1467$, $1^{2}234$, $12^{2}35$, $12345$, $12346$, $12456$, $12467$, $13467$, $1^{2}2345$, 
$1^{2}2346$, $12^{2}345$, $123456$, $123467$, $124567$, $1^{2}2^{2}345$, $1^{2}234^{2}6$, $1^{2}23456$, $1^{2}23467$, $12^{2}3456$,
$1234567$, $1^{2}2^{2}3^{2}45$, $1^{2}2^{2}3456$, $1^{2}234^{2}56$, $1^{2}234^{2}67$, $1^{2}234567$, $12^{2}34567$, 
$1^{2}2^{2}3^{2}456$, $1^{2}2^{2}34^{2}56$, $1^{2}2^{2}34567$, $1^{2}234^{2}567$, $1^{2}234^{2}6^{2}7$, $1^{3}2^{2}34^{2}56$, 
$1^{2}2^{2}3^{2}4^{2}56$, $1^{2}2^{2}3^{2}4567$, $1^{2}2^{2}34^{2}567$, $1^{2}234^{2}56^{2}7$, $1^{3}2^{2}3^{2}4^{2}56$, 
$1^{3}2^{2}34^{2}567$, $1^{2}2^{2}3^{2}4^{2}567$, $1^{2}2^{2}34^{2}56^{2}7$, $1^{3}2^{3}3^{2}4^{2}56$, $1^{3}2^{2}3^{2}4^{2}567$, 
$1^{3}2^{2}34^{2}56^{2}7$, $1^{2}2^{2}3^{2}4^{2}56^{2}7$, $1^{3}2^{3}3^{2}4^{2}5^{2}6$, $1^{3}2^{3}3^{2}4^{2}567$, 
$1^{3}2^{2}3^{2}4^{2}56^{2}7$, $1^{3}2^{2}34^{3}56^{2}7$, $1^{3}2^{3}3^{2}4^{2}5^{2}67$, $1^{3}2^{3}3^{2}4^{2}56^{2}7$, 
$1^{3}2^{2}3^{2}4^{3}56^{2}7$, $1^{4}2^{2}3^{2}4^{3}56^{2}7$, $1^{3}2^{3}3^{2}4^{3}56^{2}7$, $1^{3}2^{3}3^{2}4^{2}5^{2}6^{2}7$, 
$1^{4}2^{3}3^{2}4^{3}56^{2}7$, $1^{3}2^{3}3^{2}4^{3}5^{2}6^{2}7$, $1^{4}2^{3}3^{3}4^{3}56^{2}7$, $1^{4}2^{3}3^{2}4^{3}5^{2}6^{2}7$,
$1^{4}2^{4}3^{2}4^{3}5^{2}6^{2}7$, $1^{4}2^{3}3^{3}4^{3}5^{2}6^{2}7$, $1^{4}2^{4}3^{3}4^{3}5^{2}6^{2}7$, 
$1^{5}2^{4}3^{3}4^{3}5^{2}6^{2}7$, $1^{5}2^{4}3^{3}4^{4}5^{2}6^{2}7$, $1^{5}2^{4}3^{3}4^{4}5^{2}6^{3}7$, 
$1^{5}2^{4}3^{3}4^{4}5^{2}6^{3}7^{2}$
\end{tiny}

\subsection{Rank 8}
\hspace{12pt}\\

\baselineskip=10pt
\begin{tiny}
\noindent
Nr. $1$ with $120$ positive roots (type $E_8$):\\
$1$, $2$, $3$, $4$, $5$, $6$, $7$, $8$, $12$, $13$, $14$, $25$, $36$, $57$, $78$, $123$, $124$, $125$, $134$, $136$, $257$, $578$, 
$1234$, $1235$, $1236$, $1245$, $1257$, $1346$, $2578$, $1^{2}234$, $12345$, $12346$, $12356$, $12357$, $12457$, $12578$, 
$1^{2}2345$, $1^{2}2346$, $123456$, $123457$, $123567$, $123578$, $124578$, $1^{2}2^{2}345$, $1^{2}23^{2}46$, $1^{2}23456$, 
$1^{2}23457$, $1234567$, $1234578$, $1235678$, $1^{2}2^{2}3456$, $1^{2}2^{2}3457$, $1^{2}23^{2}456$, $1^{2}234567$, $1^{2}234578$, 
$12345678$, $1^{2}2^{2}3^{2}456$, $1^{2}2^{2}345^{2}7$, $1^{2}2^{2}34567$, $1^{2}2^{2}34578$, $1^{2}23^{2}4567$, $1^{2}2345678$, 
$1^{3}2^{2}3^{2}456$, $1^{2}2^{2}3^{2}4567$, $1^{2}2^{2}345^{2}67$, $1^{2}2^{2}345^{2}78$, $1^{2}2^{2}345678$, $1^{2}23^{2}45678$, 
$1^{3}2^{2}3^{2}4^{2}56$, $1^{3}2^{2}3^{2}4567$, $1^{2}2^{2}3^{2}45^{2}67$, $1^{2}2^{2}3^{2}45678$, $1^{2}2^{2}345^{2}678$, 
$1^{2}2^{2}345^{2}7^{2}8$, $1^{3}2^{2}3^{2}4^{2}567$, $1^{3}2^{2}3^{2}45^{2}67$, $1^{3}2^{2}3^{2}45678$, 
$1^{2}2^{2}3^{2}45^{2}678$, $1^{2}2^{2}345^{2}67^{2}8$, $1^{3}2^{3}3^{2}45^{2}67$, $1^{3}2^{2}3^{2}4^{2}5^{2}67$, 
$1^{3}2^{2}3^{2}4^{2}5678$, $1^{3}2^{2}3^{2}45^{2}678$, $1^{2}2^{2}3^{2}45^{2}67^{2}8$, $1^{3}2^{3}3^{2}4^{2}5^{2}67$, 
$1^{3}2^{3}3^{2}45^{2}678$, $1^{3}2^{2}3^{2}4^{2}5^{2}678$, $1^{3}2^{2}3^{2}45^{2}67^{2}8$, $1^{4}2^{3}3^{2}4^{2}5^{2}67$, 
$1^{3}2^{3}3^{2}4^{2}5^{2}678$, $1^{3}2^{3}3^{2}45^{2}67^{2}8$, $1^{3}2^{2}3^{2}4^{2}5^{2}67^{2}8$, $1^{4}2^{3}3^{3}4^{2}5^{2}67$, 
$1^{4}2^{3}3^{2}4^{2}5^{2}678$, $1^{3}2^{3}3^{2}4^{2}5^{2}67^{2}8$, $1^{3}2^{3}3^{2}45^{3}67^{2}8$, 
$1^{4}2^{3}3^{3}4^{2}5^{2}6^{2}7$, $1^{4}2^{3}3^{3}4^{2}5^{2}678$, $1^{4}2^{3}3^{2}4^{2}5^{2}67^{2}8$, 
$1^{3}2^{3}3^{2}4^{2}5^{3}67^{2}8$, $1^{4}2^{3}3^{3}4^{2}5^{2}6^{2}78$, $1^{4}2^{3}3^{3}4^{2}5^{2}67^{2}8$, 
$1^{4}2^{3}3^{2}4^{2}5^{3}67^{2}8$, $1^{4}2^{4}3^{2}4^{2}5^{3}67^{2}8$, $1^{4}2^{3}3^{3}4^{2}5^{3}67^{2}8$, 
$1^{4}2^{3}3^{3}4^{2}5^{2}6^{2}7^{2}8$, $1^{4}2^{4}3^{3}4^{2}5^{3}67^{2}8$, $1^{4}2^{3}3^{3}4^{2}5^{3}6^{2}7^{2}8$, 
$1^{5}2^{4}3^{3}4^{2}5^{3}67^{2}8$, $1^{4}2^{4}3^{3}4^{2}5^{3}6^{2}7^{2}8$, $1^{5}2^{4}3^{3}4^{3}5^{3}67^{2}8$, 
$1^{5}2^{4}3^{3}4^{2}5^{3}6^{2}7^{2}8$, $1^{5}2^{4}3^{4}4^{2}5^{3}6^{2}7^{2}8$, $1^{5}2^{4}3^{3}4^{3}5^{3}6^{2}7^{2}8$, 
$1^{5}2^{4}3^{4}4^{3}5^{3}6^{2}7^{2}8$, $1^{6}2^{4}3^{4}4^{3}5^{3}6^{2}7^{2}8$, $1^{6}2^{5}3^{4}4^{3}5^{3}6^{2}7^{2}8$, 
$1^{6}2^{5}3^{4}4^{3}5^{4}6^{2}7^{2}8$, $1^{6}2^{5}3^{4}4^{3}5^{4}6^{2}7^{3}8$, $1^{6}2^{5}3^{4}4^{3}5^{4}6^{2}7^{3}8^{2}$
\end{tiny}

\end{appendix}


\providecommand{\bysame}{\leavevmode\hbox to3em{\hrulefill}\thinspace}

\end{document}